\theoremstyle{plain}
\newtheorem{theorem}{Theorem}[section]
\newtheorem{proposition}[theorem]{Proposition}
\newtheorem{definition}[theorem]{Definition}
\newtheorem{notation}[theorem]{Notation}
\newtheorem{corollary}[theorem]{Corollary}
\newtheorem{remark}[theorem]{Remark}
\newtheorem{lemma}[theorem]{Lemma}
\def\eps{\epsilon}
\newcommand{\E}{\mathbb E}
\newcommand{\rn}[1]{{\mathbb R}^{#1}}
\newcommand{\R}{\mathbb R}
\newcommand{\N}{\mathbb N}
\newcommand{\supp}{\mathrm{supp}\;}
\newcommand{\cov}[1]{{\bigwedge\nolimits^{#1}{\mfrak h}}}
\newcommand{\covw}[2]{{\bigwedge\nolimits^{#1,#2}{\mfrak h}}}
\newcommand{\covH}[1]{{\bigwedge\nolimits^{#1}{\mfrak h}}}
\newcommand{\vetH}[1]{{\bigwedge\nolimits_{#1}{\mfrak h}}}
\newcommand{\covh}[1]{{\bigwedge\nolimits^{#1}{\mfrak h_1}}}
\newcommand{\he}[1]{{\mathbb H}^{#1}}
\newcommand{\scal}[2]{\langle {#1} , {#2}\rangle}
\newcommand{\Scal}[2]{\langle {#1} \vert {#2}\rangle}
\newcommand{\scalp}[3]{\langle {#1} , {#2}\rangle_{#3}}
\newcommand{\ccheck}{{\vphantom i}^{\mathrm v}\!\,}
\newcommand{\mc}{\mathcal }
\newcommand{\mfrak}{\mathfrak}
\newcommand{\WO}[3]{\mathop{W}\limits^\circ{}\!^{{#1},{#2}}
(#3)}
\begin{document}


\bigskip

\title{ {Cohomology of annuli,} duality and $L^\infty$-differential forms on Heisenberg groups}

\author[Annalisa Baldi, Bruno Franchi, Pierre Pansu]{
Annalisa Baldi\\
Bruno Franchi\\ Pierre Pansu
}

\begin{abstract}
In the last few years the authors  proved  Poincar\'e and Sobolev type inequalities in Heisenberg groups  $\he n$ for differential forms  in the Rumin's complex. The need to substitute the usual de Rham complex of differential forms for Euclidean spaces with the Rumin's complex  is due to the different stratification of the Lie algebra of Heisenberg groups. 
The crucial feature of Rumin's complex is that $d_c$ is a differential operator of order 1 or 2 according to the degree of the form.

Roughly speaking, Poincar\'e and Sobolev type inequalities are 
quantitative 
formulations of the well known topological problem whether a closed form is exact.
More precisely, for suitable $p$ and $q$, we mean that  every exact differential form $\omega$ in $L^p$ admits a primitive $\phi$ in $L^q$ such that
$
\|\phi\|_{L^{q}}\leq C\ \|\omega\|_{L^{p}}$.
The cases of the norm $L^p$, $p\ge 1$ and $q<\infty$ have been already studied in a series of papers by the authors. In the present paper we  deal with the limiting case 
where $q=\infty$: it is remarkable that, unlike in the scalar case, when the degree of the forms $\omega$ is at least $2$, we can take $q=\infty$ in the left-hand side of the inequality. The corresponding inequality in  the Euclidean setting $\mathbb{R}^N$ ($p=N$ and $q=\infty$) was proven by Bourgain \& Brezis.
\end{abstract}
 
\keywords{Heisenberg groups, differential forms, Sobolev-Poincar\'e inequalities, homotopy formula}

\subjclass{58A10,  35R03, 26D15,  
46E35}

\maketitle

\tableofcontents

\section{Introduction}\label{introduction}

\subsection{Euclidean spaces and de Rham complex}\label{derham introduction}

To begin with, let us consider preliminarily the Euclidean space $\mathbb R^N$, $N>1$ and the differential forms
of the de Rham complex $(\Omega^\bullet, d)$ on $\mathbb R^N$. 
It is well known that closed forms $\omega
\in \Omega^\bullet$ are exact, i.e. $d\omega=0$ implies that there exists $\phi\in \Omega^{\bullet - 1}$ such that
$d\phi =\omega$. This can expressed by saying that the cohomology groups
\begin{equation}\label{cohomology 1}
H^h_{\mathrm{dR}} : = (\Omega^h\cap \ker d) /d\Omega^{h-1} \qquad\mbox{are trivial for $1\le h\le N$.}
\end{equation}

Global Poincar\'e and Sobolev inequalities in $(\Omega^\bullet, d)$ are meant to give a quantitative meaning
to \eqref{cohomology 1}. More precisely, if $1\le p, q\le\infty$,
we say that a \emph{global} Poincar\'e inequality holds on $\mathbb R^N$, if 
there exists a positive constant $C=C(p,q)$ such that for every exact $h$-form $\omega$ on $\mathbb R^N$, belonging to $L^p$, 
there exists a $(h-1)$-form $\phi$ such that $d\phi=\omega$ and
\begin{eqnarray*}
\|\phi\|_{L^q} \leq C\,\|\omega\|_{L^p}.
\end{eqnarray*} 
Shortly, we shall write that Poincar\'e$_{p,q}(h)$ inequality holds, or that the $L^{q,p}$-cohomology
vanishes. For further comments and applications, we refer to \cite{Pcup}.
Notice that a homogeneity argument shows that, if $1\le p< N$, then we can take
 $p\le q\le pN/(N-p)$.

In addition, we say that a \emph{global} Sobolev inequality holds on $\mathbb R^N$, if for every exact compactly supported 
$h$-form $\omega$ on $\mathbb R^N$, belonging to $L^p$, there exists a compactly supported $(h-1)$-form $\phi$ such that $d\phi=\omega$ and
\begin{eqnarray*}
\|\phi\|_{L^q} \leq C\,\|\omega\|_{L^p}.
\end{eqnarray*}
Again, we shall write that Sobolev$_{p,q}(h)$ holds.

We point out that if $u$ is a scalar function on $\mathbb R^N$ (i.e. $u\in\Omega^0$), then Poincar\'e$_{p,q}(1)$ and
Sobolev $_{p,q}(1)$ for $du$ are nothing but the usual Poincar\'e and Sobolev inequalities.

Besides global inequalities, it is natural to consider {\sl local} inequalities, where the
Euclidean space $\mathbb R^N$ is replaced (for instance) with a Euclidean ball.
If $1<p<N$, a local Poincar\'e inequality in de Rham complex has been proved by
Iwaniec \& Lutoborsky \cite{IL}, and a Sobolev inequality for bounded convex sets
has been proved by Mitrea, Mitrea \& Monniaux \cite{mitrea_mitrea_monniaux}.

The notions of Poincar\'e and Sobolev inequalities can be weakened through the notions of
\emph{interior  inequalities}. More precisely, we say
that  an interior 
Poincar\'e$_{p,q}(h)$ inequality holds on $\mathbb R^N$ if there exists a fixed
$\lambda\geq 1$ large enough such that
for every $r>0$  small enough  
there exists a constant $C=C(M,p,q,r,\lambda)$ such that for every $x\in \mathbb R^N$ and every exact $h$-form $\omega$ on 
$B(x,\lambda r)$, belonging to $L^p$, there exists a $(h-1)$-form $\phi$ on $B(x,r)$ such that $d\phi=\omega$ on $B(x,r)$ and
\begin{eqnarray}\label{int poinc}
\|\phi\|_{L^q(B(x,r))} \leq C\,\|\omega\|_{L^p(B(x,\lambda r))}.
\end{eqnarray} 
Analogously, by \emph{interior Sobolev inequalities}, we mean that, if $\omega$ is supported in $B(x,r)$, then there exists $\phi$ supported in $B(x,\lambda r)$ such that $d\phi = \omega$ and 
\begin{eqnarray}\label{int sob}
\|\phi\|_{L^q(B(x,\lambda r))} \leq C\,\|\omega\|_{L^p(B(x, r))}.
\end{eqnarray} 
{ Here we use the word  \emph{interior} to stress the fact that inequality \eqref{int poinc} provides no information on the behaviour of differential forms near the boundary of their domain of definition.}

It turns out that in several situations, the loss on domain is harmless. This is for instance the case of $L^{q,p}$-cohomological applications, see \cite{Pcup}.
 
Relying on these weaker notions, we have been able to cover also the case $p=1$ (see \cite{BFP4}). The other endpoint result $p=N$, $q=\infty$, is more delicate. Indeed, it is well known that the interior
$\mathrm{Poincar\acute{e}}_{N, \infty}(1)$ fails to hold in  $\mathbb R^N$ (see e.g. \cite{wheeden},  p. 484),
and has to be replaced by the so-called Trudinger exponential estimate (see \cite{trudinger})
or by the more precise Adams-Trudinger inequality (\cite{wheeden}, Theorem 15.30).

However, rather surprisingly, in  \cite{BB2007} Bourgain \& Brezis proved that a global $\mathrm{Poincar\acute{e}}_{N, \infty}(h)$
holds for $1<h<N-1$. 

\subsection{Heisenberg groups and Rumin's complex}\label{heisenberg intro} 

In the last few years, the authors of the present paper have attacked the study of Poincar\'e and Sobolev inequalities in sub-Riemannian manifolds endowed with a ``suitable'' complex of differential forms (we remind that the data of a smooth manifold $M$ and of a sub-bundle $H\subset TM$ equipped with a scalar product $g$ is called a \emph{sub-Riemannian} manifold).
See, e.g., \cite{GromovCC}, \cite{montgomery}.
 
 More precisely, we have considered
 differential forms of the so-called Rumin complex of Heisenberg groups: see \cite{BF5}, \cite{BFP1},
 \cite{BFP2}, \cite{BFP3}, \cite{BFP_Catania}.  
The Heisenberg group $\he n$, $n\geq 1$, is the connected, simply connected Lie group whose Lie algebra is the central extensions 
\begin{equation}\label{strat intro}
\mathfrak{h}=\mathfrak{h}_1\oplus\mathfrak{h}_2,\quad\mbox{with $\mathfrak{h}_2=\R=Z(\mathfrak{h})$,}
\end{equation}
with bracket $\mathfrak{h}_1\otimes\mathfrak{h}_1\to\mathfrak{h}_2=\R$ being a non-degenerate skew-symmetric 2-form. 
 Due to its 
stratification \eqref{strat intro}, the Heisenberg Lie algebra admits a one parameter group of automorphisms $\delta_t$,
\begin{eqnarray*}
\delta_t=t\textrm{ on }\mathfrak{h}_1,\quad \delta_t=t^2 \textrm{ on }\mathfrak{h}_2,
\end{eqnarray*}
which are counterparts of the usual Euclidean dilations in $\rn N$. Through exponential coordinates, $\he n$ can be identified  
with the Euclidean space $\mathbb R^{2n+1}$, endowed with the
non-commutative product induced by the Campbell-Hausdorff formula. In this system of coordinates,
the identity element $e\in \he n$ is the zero of the vector space $\mathbb R^{2n+1}$, and $p^{-1} = -p$.
In addition, in this system of coordinates, the Haar measure of the group is the $(2n+1)$-dimensional Lebesgue measure $\mc L^{2n+1}$.

Heisenberg groups are the simplest nontrivial (i.e. non-commutative) instance of
the so called {\sl Carnot groups},
connected, simply connected and stratified Lie groups. 
Heisenberg groups can be viewed as sub-Riemannian spaces,
where the sub-Riemannian structure is obtained by left-translating $\mathfrak{h}_1$ (we remind that
the Lie algebra of $\he n$ can be identified with the tangent
space to $\he n$ at $e$).
In addition, Heisenberg groups are the local models of contact manifolds,
since, according to a theorem by Darboux, every $2n+1$-dimensional contact manifold is locally contactomorphic to $\he n$.

For a general review on Heisenberg groups and their properties, we
refer for instance to \cite{Stein}, \cite{GromovCC}, \cite{BLU},  \cite{VarSalCou}. The main properties
of $\he n$ that we shall need in this paper will be presented below in Section \ref{preliminary}.
Here we limit ourselves to reminding that Heisenberg groups carry natural left-invariant metrics, either Carnot-Carath\'eodory distances as sub-Riemannian manifolds
or, equivalently, Cygan-Kor\'anyi norms $\rho$ (see \eqref{gauge} below). Throughout this paper we use systematically
the Cygan-Kor\'anyi distance $d(p,q) := \rho(p^{-1}\cdot q)$. The distance $d$ is homogeneous of degree
one with respect to group dilations $\delta_t$, so that, if we denote by $B(p,r)$ the Cygan-Kor\'anyi
ball of radius $r>0$ centered at $p\in\he n$, then $\mc L^{2n+1}(B(p,r)) = c r^{2n+2}$. In particular,
this implies that the Hausdorff dimension of $\he n$ with respect to $d$ equals $Q:=2n+2$.

As a consequence of the stratification
\eqref{strat intro}, the differential forms on $\mathfrak{h}$ split into 2 ei\-gen\-spac\-es under $\delta_t$, therefore de Rham complex lacks scale invariance under these anisotropic dilations. A substitute for de Rham's complex, that recovers scale invariance under $\delta_t$ has been defined by M. Rumin, \cite{rumin_jdg}. 

Let $h=0,\ldots,2n+1$. Rumin's substitute for smooth differential forms of degree $h$ are the smooth sections of a left-invariant vector bundle $E_0^h$. If $h\leq n$, $E_0^h$ is a subbundle of $\Lambda^h H^*$. If $h\geq n$, $E_0^h$ is a subbundle of $\Lambda^h H\otimes (TM/H)$. Rumin's substitute for de Rham's exterior differential is a linear differential operator $d_c$ from sections of $E_0^h$ to sections of $E_0^{h+1}$
such that $d_c^2=0$.  Further details about Rumin's complex are contained in Section \ref{rumin complex} below. 
We refer to also to \cite{rumin_jdg}, \cite{BFTT}  \cite{BFT2} and \cite{BFT3} for details of the construction.

\medskip

We stress that the operator $d_c$ is a left-invariant differential
operator of order $2$ when acting on forms of degree $n$ and of order $1$ otherwise.

\medskip

This phenomenon will be a major issue in our results and will affect the proofs (think for instance of Leibniz formula) as well as the choice of the exponents $p,q$ in our inequalities.
 
\medskip

\subsection{Poincar\'e and Sobolev inequalities: precise definitions}
We can state now the notions of (global and interior) Poincar\'e and Sobolev inequalities
in the setting of Rumin's complex.

\medskip

\begin{definition}\label{poincare global def}

 If $1\le h\le 2n+1$ and $1\le p\le q \le \infty$, we say that the global $\he{}$-$\mathrm{Poincar\acute{e}}_{p,q}$ inequality holds in $E_0^h$ if 
there exists a constant $C$ such that,
 for every $d_c$-exact differential $h$-form $\omega$ in $L^p(\he n;E_0^h)$ there exists a differential $(h-1)$-form $\phi$ in $L^q(\he n,E_0^{h-1})$ such that $d_c\phi=\omega$ and
\begin{eqnarray}\label{H global poincare}
\|\phi\|_{L^{q}(\he n,E_0^{h-1})}\leq C\,\|\omega\|_{L^{p}(\he n,E_0^h)} \quad \mbox{ 
$\mathrm{global}\, \he{}$-$\mathrm{Poincar\acute{e}}_{p,q}(h)$.
}
\end{eqnarray}
\end{definition}

\begin{definition}\label{poincare def}

{ Let $B:=B(e,1)$ and $B_\lambda:=B(e,\lambda)$.}
Given $1\le h\le 2n+1$ and $1\le p\le q \le \infty$, we say that the interior $\he{}$-$\mathrm{Poincar\acute{e}}_{p,q}$ inequality holds in $E_0^h$ if 
{ there exist constants $\lambda>1$ and $C$} such that,
 for every $d_c$-exact differential $h$-form $\omega$ in $L^p(B_\lambda;E_0^h)$ there exists a differential $(h-1)$-form $\phi$ in $L^q(B,E_0^{h-1})$ such that $d_c\phi=\omega$ and
\begin{eqnarray}\label{H poincare}
\|\phi\|_{L^{q}(B,E_0^{h-1})}\leq C\,\|\omega\|_{L^{p}(B_\lambda,E_0^h)} \quad 
\mbox{ 
$\mathrm{interior}\, \he{}$-$\mathrm{Poincar\acute{e}}_{p,q}(h)$.
}
\end{eqnarray}
\end{definition}

\begin{definition} \label{equiv global Sobolev}
If $1\le h\le 2n+1$, $1\le p \le q \le \infty $,
we say that the global
$\he{}$-$\mathrm{Sobolev}_{p,q}(h)$ inequality holds if there exists a constant $C$ such that 
 for every compactly supported $d_c$-exact differential $h$-form $\omega$ in $L^p(\he n;E_0^h)$ there exists a 
compactly supported differential $(h-1)$-form $\phi$ in $L^q(\he n,E_0^{h-1})$ such that $d_c\phi=\omega$ 
and
\begin{eqnarray}\label{H global Sobolev}
\|\phi\|_{L^{q}(\he n,E_0^{h-1})}\leq C\,\|\omega\|_{L^{p}(\he n,E_0^h)} \quad 
\mbox{ 
$\mathrm{global}\, \he{}$-$\mathrm{Sobolev}_{p,q}(h)$.
}
\end{eqnarray}
\end{definition}

\begin{definition} \label{equiv Sobolev}
{ Let $B:=B(e,1)$ and $B_\lambda:=B(e,\lambda)$.}
Given $1\le h\le 2n$, $1\le p \le q \le \infty $,
we say that the interior
$\he{}$-$\mathrm{Sobolev}_{p,q}(h)$ inequality holds if { there exist constants $\lambda>1$ and $C$} such that 
 for every compactly supported $d_c$-exact differential $h$-form $\omega$ in $L^p(B;E_0^h)$ there exists a 
 compactly supported differential $(h-1)$-form $\phi$ in $L^q(B_\lambda,E_0^{h-1})$ such that $d_c\phi=\omega$ in $B_\lambda$
and
\begin{eqnarray}\label{H Sobolev}
\|\phi\|_{L^{q}(B_\lambda,E_0^{h-1})}\leq C\,\|\omega\|_{L^{p}(B,E_0^h)} \quad 
\mbox{ 
$\mathrm{interior}\, \he{}$-$\mathrm{Sobolev}_{p,q}(h)$.
}
\end{eqnarray}
Here we have extended $\omega$ by $0$ to all of $B_\lambda$.
\end{definition}

\begin{remark}\label{sobolev local global} As in \cite{BFP2}, Corollary 5.21, an elementary
scaling argument shows that, if $h\neq n$, $1\le p <Q$ and $q= pQ/(Q-p)$, or
$h=n$, $1\le p <Q/2$ and $q= pQ/(Q-2p)$ then the { interior}
$\he{}$-$\mathrm{Sobolev}_{p,q}(h)$ implies the global $\he{}$-$\mathrm{Sobolev}_{p,q}(h)$ 
inequality. 

Suppose $1< h <2n$. If $h\neq n$, $p=Q$ take $q=\infty$, and,
$h=n$, $p=Q/2$ take $q=\infty$. We shall see later that (unlike in the case $h=1$ or $h=2n$),
{ interior} $\he{}$-$\mathrm{Sobolev}_{Q,\infty}(h)$ 
hold. Then, again the corresponding global inequalities hold, thanks to the same scaling argument. 

In the sequel, we shall refer to the exponents $q= pQ/(Q-p)$ or $q= pQ/(Q-2p)$ according
to the degree of the forms as to the {\sl sharp Sobolev exponent}.

\end{remark}

\subsection{State of the art}\label{state of the art}

In \cite{BFP2} and \cite{BFP3} the following Poincar\'e and Sobolev inequalities have been
proven. More precisely, \cite{BFP2} deals with the case $p>1$, whereas \cite{BFP3} covers the case $p=1$.

\bigskip

\begin{theorem}[Poincar\'e inequality] \label{P introduction}
If \, $1\le  h \le 2n+1$,
we have:
\begin{itemize}
\item[i)] if $h \neq n+1,2n+1$ and $1\le p< Q$, then the interior
$\he{}$-$\mathrm{Poincar\acute{e}}_{p,pQ/(Q-p)}(h)$ holds;
\item[ii)] if $h = n+1$ and $1\le p< Q/2$, then the interior
$\he{}$-$\mathrm{Poincar\acute{e}}_{p,pQ/(Q-2p)}(n+1)$ holds;
{ \item[iii)] if $h =2n+1$ and $1<p< Q$, then the interior
$\he{}$-$\mathrm{Poincar\acute{e}}_{p,pQ/(Q-p)}(h)$ holds.}
\end{itemize}
Analogous statements hold for global Poincar\'e inequalities on $\he n$.
\end{theorem}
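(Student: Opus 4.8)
We outline the argument; it coincides with the construction carried out in \cite{BFP2} for $p>1$ and in \cite{BFP3} for $p=1$. The plan is to produce, in each degree $h$, a bounded homotopy operator inverting $d_c$ on $\he n$, to read off the exponents from the homogeneity of its convolution kernel, and then to localize to $B$ by a cut-off argument.

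\emph{A global homotopy.} The first step is to use Rumin's Laplacian $\Delta_{E_0,h}$ on $E_0^h$, built from $d_c$ and its formal $L^2$-adjoint $d_c^*$ (see \cite{rumin_jdg}). Since $d_c$ has order $1$ in every degree except that $d_c\colon E_0^n\to E_0^{n+1}$ has order $2$, Rumin's prescription makes $\Delta_{E_0,h}$ homogeneous under $\delta_t$ of degree $2$ for $h\notin\{n,n+1\}$ and of degree $4$ for $h\in\{n,n+1\}$ -- explicitly $\Delta_{E_0,n}=d_c^*d_c+(d_cd_c^*)^2$ and $\Delta_{E_0,n+1}=d_cd_c^*+(d_c^*d_c)^2$. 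Each $\Delta_{E_0,h}$ is hypoelliptic and commutes with $d_c$, and by homogeneity it is inverted on $L^2(\he n;E_0^h)$ by left convolution with a kernel $g_h$, smooth off $e$ and homogeneous of degree $2-Q$ (resp. $4-Q$; note $Q=2n+2>2$). Since $\he n$ carries no nonzero $L^2$ Rumin-harmonic form, $G_h:=\Delta_{E_0,h}^{-1}$ satisfies $\Delta_{E_0,h}G_h=\mathrm{Id}$. Expanding $\omega=\Delta_{E_0,h}G_h\omega$ for a $d_c$-closed $\omega$ and discarding every summand that begins with $d_cG_h\omega$ (which vanishes, as $d_c$ commutes with $G_h$ and $d_c\omega=0$) isolates a primitive
\[
\phi=K_h\omega,\qquad d_c\phi=\omega,
\]
where $K_h=d_c^*G_h$ for $h\neq n,n+1$, while $K_n=d_c^*d_cd_c^*G_n$ (the surviving term of $(d_cd_c^*)^2$) and $K_{n+1}=d_c^*G_{n+1}$.

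\emph{The exponents.} Counting horizontal derivatives, $K_h$ is convolution with a kernel homogeneous of degree $1-Q$ when $h\neq n+1$ -- one factor $d_c^*$ of order $1$ against $g_h$ of degree $2-Q$, or three factors of order $1$ against $g_n$ of degree $4-Q$ -- and homogeneous of degree $2-Q$ when $h=n+1$, since there $d_c^*\colon E_0^{n+1}\to E_0^n$ has order $2$. Such kernels are the Heisenberg fractional integrals $I_\alpha$ with $\alpha=1$, resp. $\alpha=2$, and the Hardy--Littlewood--Sobolev inequality on $\he n$ gives $\|I_\alpha f\|_{L^q}\le C\|f\|_{L^p}$ whenever $1\le p<Q/\alpha$ and $\tfrac1q=\tfrac1p-\tfrac{\alpha}{Q}$ (with the appropriate strong-type statement at $p=1$ for the forms at hand, see \cite{BFP3}). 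This yields the global $\he{}$-$\mathrm{Poincar\acute{e}}_{p,\,pQ/(Q-p)}(h)$ for $h\neq n+1,2n+1$ and $1\le p<Q$ -- part (i) -- and the global $\he{}$-$\mathrm{Poincar\acute{e}}_{p,\,pQ/(Q-2p)}(n+1)$ for $1\le p<Q/2$ -- part (ii). For $h=2n+1$ one still has $K_{2n+1}=d_c^*G_{2n+1}$, again an $I_1$; but recovering $\phi$ from $\omega$ here amounts to solving a horizontal divergence equation, whose $L^1$ endpoint genuinely fails (the Bourgain \& Brezis obstruction, cf. \cite{BB2007}), so one takes $1<p<Q$, where Calder\'on--Zygmund theory for $I_1$ applies and gives part (iii).

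\emph{Localization and the main difficulty.} To pass from the global homotopy to the interior statement on $B=B(e,1)$, fix $\lambda>1$ and a cut-off $\chi$ with $\chi\equiv1$ on $B$ and $\mathrm{supp}\,\chi\subset B_\lambda$; applying $\mathrm{Id}_{E_0^h}=d_cK_h+K_{h+1}d_c$ to $\chi\omega$ and restricting to $B$ gives $\omega=d_c\big(K_h(\chi\omega)\big)+K_{h+1}\big([d_c,\chi]\omega\big)$, with the commutator error $[d_c,\chi]\omega$ supported in $B_\lambda\setminus B$ and $d_c$-closed; this error is absorbed by a Neumann series (as in \cite{BFP2}), and the loss $\lambda>1$ accounts for the nested supports of the cut-offs. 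The delicate point throughout is the jump of $d_c$ to order $2$ at degree $n$: it is what forces $\Delta_{E_0,n}$ and $\Delta_{E_0,n+1}$ to be fourth order -- hence the genuinely weaker exponent and the range $p<Q/2$ in part (ii), and the longer homotopy $K_n$ in part (i) -- and it makes $[d_c,\chi]$ a first-order operator on $n$-forms, so that the localization must also control the second horizontal derivatives of $\chi$. The remaining subtlety, the lack of a strong $L^1$ bound for the divergence-type primitive at $h=2n+1$, is exactly why that degree is singled out with $p>1$ in part (iii).
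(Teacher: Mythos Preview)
The paper does not prove this theorem; it is quoted from \cite{BFP2} (case $p>1$) and \cite{BFP3} (case $p=1$), and your outline is indeed the scheme carried out there. The homotopy $K_h=d_c^*\Delta_{\mathbb H,h}^{-1}$ (with the appropriate longer word in degree $n$) and the kernel-type count are exactly as in those papers, and the present paper reproduces the relevant pieces in Lemma~\ref{comm} and Proposition~\ref{homotopy formulas L infty}.

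Two points deserve sharpening. First, your sentence ``$d_c$ commutes with $G_h$'' is not literally true: the commutation identities in Lemma~\ref{comm} involve a shift of degree in the Laplacian and, in the exceptional degrees $h=n-1,n+1$, an extra factor $d_cd_c^*$ or $d_c^*d_c$. What is true (and what you actually use) is that $d_cG_h\omega=0$ whenever $d_c\omega=0$; this follows case by case from Lemma~\ref{comm}. Second, the phrase ``the appropriate strong-type statement at $p=1$'' hides the real content of \cite{BFP3}: Hardy--Littlewood--Sobolev for $I_1$ is only weak-type at $p=1$, and the strong inequality requires the specific algebraic structure of $d_c^*$ (a Lanzani--Stein / Van Schaftingen type cancellation), not a generic fractional-integration bound. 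Your outline is right to defer to \cite{BFP3}, but it should be said that this is where the substance lies.

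Finally, the localization in \cite{BFP2} and in the present paper (Proposition~\ref{smoothing-A}, Theorem~\ref{pq poincare}) is not a Neumann series: one truncates the convolution kernel near the origin, so that the error $S\omega$ is a \emph{smoothing} operator, and then inverts $d_c$ on the smooth closed remainder by the Iwaniec--Lutoborski / Mitrea--Mitrea--Monniaux homotopy. Your cut-off description produces a remainder supported in the shell $B_\lambda\setminus B$, which is likewise smooth on $B$, so the mechanism is the same; but ``Neumann series'' is not the right name for it.
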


\begin{theorem}[Sobolev inequality]\label{S introduction} If \, $1\le  h \le 2n+1$,
we have:
\begin{itemize}
\item[i)] if $h \neq n+1,2n+1$ and $1\le p< Q$, then the interior
$\he{}$-$\mathrm{Sobolev}_{p,pQ/(Q-p)}(h)$ holds;
\item[ii)] if $h = n+1$ and $1\le p< Q/2$, then the interior
$\he{}$-$\mathrm{Sobolev}_{p,pQ/(Q-2p)}(n+1)$ holds;
{ \item[iii)] if $h =2n+1$ and $1<p< Q$, then the interior
$\he{}$-$\mathrm{Sobolev}_{p,pQ/(Q-p)}(h)$ holds.}
\end{itemize}
Analogous statements hold for global Sobolev inequalities on $\he n$.
\end{theorem}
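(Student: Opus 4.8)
The plan is to obtain both the Poincar\'e and the Sobolev families from one \emph{global homotopy operator} for Rumin's complex, together with the Hardy--Littlewood--Sobolev inequality on homogeneous groups, and then to pass from the global primitive thus produced to an interior, compactly supported one by a cut-off argument using the triviality of the cohomology of a Kor\'anyi annulus.

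First I would recall Rumin's Laplacian $\Delta_c$ on $E_0^\bullet$: it is left-invariant, hypoelliptic, commutes with $d_c$, and is homogeneous of degree $2$ in every degree $\neq n$ and of degree $4$ in degree $n$. By the theory of homogeneous hypoelliptic operators on stratified groups it admits a homogeneous fundamental solution, smooth away from $e$; composing it with $d_c^\ast$ one gets, through Rumin's homotopy formula, left-invariant operators $K=K^h$ with $d_cK+Kd_c=\mathrm{Id}$ on $E_0^h$, each a convolution with a kernel homogeneous of degree $-Q+\alpha(h)$ and smooth off $e$, where $\alpha(h)=2$ precisely when $h=n+1$ — since $d_c\colon E_0^{n}\to E_0^{n+1}$ has order $2$ — and $\alpha(h)=1$ otherwise. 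Then for every $d_c$-exact $\omega$ (so in particular $d_c\omega=0$) the form $\phi:=K^h\omega$ satisfies $d_c\phi=\omega$, and the Folland--Stein Hardy--Littlewood--Sobolev inequality gives $\|\phi\|_{L^q}\le C\|\omega\|_{L^p}$ whenever $1<p<Q/\alpha(h)$ and $\tfrac1q=\tfrac1p-\tfrac{\alpha(h)}{Q}$, i.e.\ exactly the exponents of the statement. This already yields the global $\he{}$-$\mathrm{Poincar\acute{e}}_{p,q}(h)$ away from the endpoint; the interior version follows by a standard localization, multiplying by a cut-off $\chi$, since the commutator $[d_c,\chi]$ has order $\le 1$ and the kernel of $K^h$ is smooth off the diagonal, so the non-local contributions are dominated by $\|\omega\|_{L^1(B_\lambda)}\le C\|\omega\|_{L^p(B_\lambda)}$.

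To obtain the Sobolev versions i) and ii) for $2\le h\le 2n$, $h\ne n+1$, I would argue as follows. Given $\omega$ supported in $B$, extend it by zero and set $\phi_0:=K^h\omega$; it is a global $L^q$ primitive and, being a Riesz-type potential of a compactly supported $L^1$ form, it is smooth with all derivatives $\lesssim\|\omega\|_{L^1}$ on any region away from $\overline{B}$. On the exterior $\he n\setminus \overline{B}$, which retracts by dilations onto $\{\rho=1\}\cong S^{2n}$ and therefore has vanishing $(h-1)$-th de Rham cohomology for $2\le h\le 2n$, the closed form $\phi_0$ can be written $\phi_0=d_c\beta$ with $\|\beta\|_{L^\infty(B_\lambda\setminus B_2)}\lesssim\|\omega\|_{L^1}$; choosing $\chi$ with $\chi\equiv 0$ on $B$ and $\chi\equiv 1$ off $B_\lambda$, the form $\phi:=\phi_0-d_c(\chi\beta)$ is supported in $B_\lambda$, satisfies $d_c\phi=d_c\phi_0=\omega$ everywhere, and obeys $\|\phi\|_{L^q(B_\lambda)}\le\|\phi_0\|_{L^q}+\|\chi\,\phi_0\|_{L^q}+\|(d_c\chi)\,\beta\|_{L^q}\le C\|\omega\|_{L^p}$, which proves both the interior and (after the rescaling of Remark \ref{sobolev local global}) the global Sobolev inequality. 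For $h=1$, also in item i), one instead invokes directly the classical Folland--Stein Sobolev inequality for the horizontal gradient, valid down to $p=1$ (here $\omega$ exact forces its potential to be compactly supported); and the top degree $h=2n+1$ is handled separately as a divergence-type equation $d_c\phi=f\,\mathrm{vol}$ with $\int f=0$, the same scheme working for $1<p<Q$.

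The main obstacle is the endpoint $p=1$ in cases i)--ii) and the exclusion of $p=1$ in case iii): the Hardy--Littlewood--Sobolev bound for $K^h$ is only of weak type $L^{q,\infty}$ at $p=1$, so recovering a strong $L^q$ estimate — and seeing exactly which degrees permit $p=1$ — requires a finer study of the kernel of $K^h$ \emph{in its action on $d_c$-exact forms}, exploiting the div--curl-type cancellation among its components. This is where the order-$1$-versus-order-$2$ dichotomy of $d_c$ genuinely interferes (for instance through Leibniz's rule in the cut-off step), and it is the reason the hypotheses for $h=n+1$ and $h=2n+1$ differ from those for a generic degree.
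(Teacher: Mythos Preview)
This theorem is not proved in the present paper; it is quoted from the authors' earlier work \cite{BFP2} (the case $p>1$) and \cite{BFP3} (the case $p=1$) as background for the new $q=\infty$ endpoint results. So there is no proof here to compare against directly.

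Your sketch for $p>1$ is essentially the strategy of \cite{BFP2}: the homotopy $K^h=d_c^{*}\Delta_{\mathbb H}^{-1}$ furnishes a global primitive, and Folland--Stein bounds for kernels of type $1$ or $2$ give the correct Sobolev gain. One small slip: $\Delta_{\mathbb H,h}$ has order $4$ in degrees $n$ \emph{and} $n+1$, not only in degree $n$; your conclusion that the primitive kernel has type $2$ exactly when $h=n+1$ is nevertheless correct, because of how $d_c^{*}$ combines with $\Delta_{\mathbb H}^{-1}$ in those degrees --- compare Lemma~\ref{comm} and Proposition~\ref{homotopy formulas L infty}. Your mechanism for restoring compact support --- write the global primitive as $d_c\beta$ on an exterior annulus and subtract $d_c(\chi\beta)$ --- is exactly the device this paper develops in Sections~\ref{homotopy annuli}--\ref{proof} for the $q=\infty$ Sobolev inequality, so it is a legitimate route; but note that it requires \emph{quantitative} control of $\beta$ on the annulus (indeed $W^{1,\infty}$ control when $h=n+2$, because the Leibniz commutator $[d_c,\chi]$ is then second order), and establishing this is not automatic --- it is the content of Proposition~\ref{p cohomology k-annuli} and Remark~\ref{May 8 bis} here.

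Where your proposal is genuinely incomplete is the endpoint $p=1$. You correctly flag that Hardy--Littlewood--Sobolev fails there and that some cancellation coming from $d_c$-closedness must be exploited, but ``a finer study of the kernel'' understates what \cite{BFP3} actually does: the argument there is not a refinement of the same convolution estimate but a different mechanism (of Bourgain--Brezis / Lanzani--Stein type) that uses the constraint $d_c\omega=0$ in an essential, non-pointwise way. Your proposal offers no plan for this, so as it stands it covers only the $p>1$ range of the statement.
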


\bigskip

\subsection{Main results and sketch of the proofs}\label{main results intro}

The aim of the present paper is to complete the results gathered in Section \ref{state of the art},
by covering (when possible) the endpoints $p=Q$ or $p=Q/2$ according to the degree of the forms.

Thus, the core of the present paper consists of the following theorems:

\begin{theorem}\label{poincare infty}
If \, $2\le  h \le { 2n+1}$,
we have:
\begin{itemize}
\item[i)] if $h \neq n+1$, then the interior
$\he{}$-$\mathrm{Poincar\acute{e}}_{Q, \infty}(h)$ holds;
\item[ii)] if $h = n+1$, then the interior
$\he{}$-$\mathrm{Poincar\acute{e}}_{Q/2, \infty}(n)$ holds.
\end{itemize}
Analogous statements hold for global Poincar\'e inequalities on $\he n$.
\end{theorem}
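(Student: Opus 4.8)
The plan is to deduce the $L^\infty$ endpoint from the already-known subcritical Poincar\'e inequalities of Theorem \ref{P introduction}, imitating the scheme of Bourgain--Brezis \cite{BB2007} adapted to the Rumin complex. The starting observation is that when $h\geq 2$ the primitive $\phi$ of degree $h-1\geq 1$ is not required to lie in any Sobolev space below $L^\infty$; so, unlike the scalar case $h=1$ (where $\phi$ is a function and the Poincar\'e$_{Q,\infty}(1)$ inequality genuinely fails), we have room to correct $\phi$ by a closed form. Concretely, I would first apply the interior $\he{}$-Poincar\'e inequality at a subcritical exponent $p<Q$ (resp. $p<Q/2$ if $h=n+1$) to obtain, for a given $d_c$-exact $\omega\in L^Q(B_\lambda;E_0^h)$, a primitive $\phi_0$ with $d_c\phi_0=\omega$ on a slightly smaller ball and $\|\phi_0\|_{L^{q_0}}\lesssim\|\omega\|_{L^{p}}\lesssim\|\omega\|_{L^Q}$ where $q_0=pQ/(Q-p)$ (resp. $pQ/(Q-2p)$) can be taken as large as we please. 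The point is then to improve the integrability of $\phi_0$ from ``large but finite $q_0$'' up to $q=\infty$ by modifying it within its $d_c$-cohomology class, i.e. replacing $\phi_0$ by $\phi_0-d_c\psi$ for a well-chosen $\psi$ of degree $h-2$.

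\textbf{Key steps.} The heart of the Bourgain--Brezis argument is a duality/decomposition lemma: to control $\|\phi\|_{L^\infty}$ one controls $\int\langle\phi,\beta\rangle$ for test forms $\beta$ of the dual degree, and the crucial input is that every such $\beta$ (of degree in the appropriate range) can be split as $\beta=\beta_1+d_c^*\beta_2$ (a Hodge-type decomposition in the Rumin complex) with quantitative bounds $\|\beta_1\|_{L^1}+\|\beta_2\|_{L^{?}}\lesssim\|\beta\|_{L^1}$ — this is precisely where the fact that $d_c$ is first-order off the middle dimension, and second-order in the middle dimension, forces the two cases (i) $h\neq n+1$ and (ii) $h=n+1$, and dictates the exponents $Q$ versus $Q/2$. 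So the ordered steps are: (1) set up the functional-analytic dual formulation, reducing $\he{}$-$\mathrm{Poincar\acute{e}}_{Q,\infty}(h)$ to an a priori estimate $|\int\langle\omega,\gamma\rangle|\lesssim\|\omega\|_{L^Q}\|\gamma\|_{*}$ for $d_c^*$-exact test forms $\gamma$ with a suitable norm $\|\cdot\|_*$; (2) prove the Rumin-complex analogue of the Bourgain--Brezis div-curl / Hodge decomposition for the test forms, using the homotopy formula for Rumin's complex together with the interior Poincar\'e inequalities of Theorem \ref{P introduction} applied in the complementary degree; (3) combine, using the Leibniz rule for $d_c$ (with the usual care that in the middle degree a second-order operator appears), to close the estimate; (4) pass from the interior statement on $B$, $B_\lambda$ to the global statement on $\he n$ by the scaling argument already invoked in Remark \ref{sobolev local global}. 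A technical reduction I would make at the outset is to work on an annulus rather than a ball — as the paper's title suggests — since the cohomology of an annulus is what lets one patch a locally defined primitive into a globally defined one without losing integrability.

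\textbf{Main obstacle.} The delicate point is step (2): establishing the quantitative Hodge/Bourgain--Brezis decomposition in the Rumin complex, \emph{with} the loss of domain absorbed and \emph{with} the correct exponents in the anisotropic geometry where the homogeneous dimension is $Q=2n+2$ rather than the topological dimension $2n+1$. In the middle degree $h=n+1$ the operator $d_c$ being second order means the relevant estimate is a ``$\Delta_H$-type'' estimate rather than a first-order div-curl estimate, so the Bourgain--Brezis machinery must be fed the $L^1$--$L^{Q/(Q-2),\infty}$ or $L^1$--Lorentz bounds for the appropriate Rumin Laplacian; checking that these bounds are exactly what is available (via the subcritical Poincar\'e inequalities and Calder\'on--Zygmund theory for the Rumin complex developed in the authors' earlier work \cite{BFTT,BFT2,BFT3}) is where the real work lies. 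A secondary difficulty is bookkeeping the degrees: one must verify that whenever the target degree $h-1$ or the dual degree falls in the range where Theorem \ref{P introduction} applies, it is \emph{not} one of the excluded degrees ($n+1$ off-critical, $2n+1$, etc.), which is precisely why the hypothesis here is $2\le h\le 2n+1$ and why the $h=n+1$ case is stated with the shifted conclusion ``$\he{}$-$\mathrm{Poincar\acute{e}}_{Q/2,\infty}(n)$''.
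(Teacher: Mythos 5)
Your step (1) --- the dual formulation via Hahn--Banach, reducing $\he{}$-$\mathrm{Poincar\acute{e}}_{Q,\infty}(h)$ to the a priori estimate $|\int_B\omega\wedge\alpha|\le C\,\|\omega\|_{L^Q}\|d_c\alpha\|_{L^1}$ over test forms $\alpha$ of degree $2n+1-h$ --- is exactly Proposition \ref{poincaredual} and is the right starting point. The genuine gap is your step (2), which you yourself flag as ``where the real work lies'' and then do not carry out. The splitting $\beta=\beta_1+d_c^*\beta_2$ fed by $L^1$--Lorentz bounds for the Rumin Laplacian would not close the argument: weak-type $(1,q)$ bounds do not give the strong $L^{p'}$ control needed to pair against $\omega\in L^Q$ by H\"older, and strong $L^1$-based Calder\'on--Zygmund estimates are false in general. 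The ingredient that actually closes the estimate is the interior $L^1$ \emph{Sobolev} inequality in the complementary degree, $\he{}$-$\mathrm{Sobolev}_{1,p'}(2n+2-h)$ with $p'$ dual to $Q$ (resp.\ $Q/2$ when $h=n+1$, since then $2n+2-h=n+1$ and $d_c$ is second order there). This is a nontrivial Bourgain--Brezis/Van Schaftingen-type result already proved in \cite{BFP3} and quoted as the $p=1$ case of Theorem \ref{S introduction}; it is an input, not something to be rederived from Theorem \ref{P introduction} plus the homotopy formula. Given it, the argument is short (Proposition \ref{sobolev implies poincare bis}): apply it to $d_c\alpha\in L^1$ to get a compactly supported $\beta\in L^{p'}$ with $d_c\beta=d_c\alpha$ and $\|\beta\|_{L^{p'}}\lesssim\|d_c\alpha\|_{L^1}$; since $\beta-\alpha$ is $d_c$-closed, compactly supported and in $L^{p'}$, Lemma \ref{byparts}~iii) gives $\int\omega\wedge(\beta-\alpha)=0$, and H\"older finishes. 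No Leibniz formula and no annuli are needed here; those devices belong to the proof of $\mathrm{Sobolev}_{Q,\infty}$ (Theorem \ref{sobolev q infty}), which your sketch partly conflates with this one.

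Your opening idea --- produce $\phi_0\in L^{q_0}$ with $q_0$ large via the subcritical inequalities and then upgrade to $L^\infty$ by subtracting $d_c\psi$ --- also does not work as stated: the constants in $\mathrm{Poincar\acute{e}}_{p,q_0}$ are not uniform as $q_0\to\infty$ (this non-uniformity is precisely why the scalar case $h=1$ fails and must be replaced by a Trudinger-type estimate), and you give no mechanism for selecting the correcting form $\psi$. In the paper the $L^\infty$ primitive is produced in one stroke by Hahn--Banach and the representation of $(L^1)^*$, not by improving a subcritical primitive.
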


\begin{theorem}
\label{sobolev q infty}
If\, $2\le  h \le { 2n+1}$,
we have:
\begin{itemize}
\item[i)] if $h \neq n+1$, then the interior
$\he{}$-$\mathrm{Sobolev}_{Q, \infty}(h)$ holds;
\item[ii)] if $h = n+1$, then the interior
$\he{}$-$\mathrm{Sobolev}_{Q/2, \infty}(n)$ holds.
\end{itemize}
Analogous statements hold for global Sobolev inequalities on $\he n$.
\end{theorem}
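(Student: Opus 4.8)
The plan is to deduce the $\mathrm{Sobolev}_{Q,\infty}$ (resp.\ $\mathrm{Sobolev}_{Q/2,\infty}$) inequalities from the corresponding $\mathrm{Poincar\acute{e}}$ inequalities of Theorem~\ref{poincare infty}, exactly in the spirit of the passage from interior Poincar\'e to interior Sobolev carried out in \cite{BFP2}, \cite{BFP3}. Recall that in the Rumin complex a compactly supported $d_c$-exact form $\omega$ in $E_0^h$ always admits \emph{some} primitive (not yet with good support or good norm): this is precisely the content of the interior Poincar\'e inequality applied on a ball slightly larger than $\supp\omega$. The task is therefore to cut off such a primitive and correct the error so as to regain compact support inside $B_\lambda$, while keeping the $L^\infty$ bound.

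First I would fix $\omega$ compactly supported in $B=B(e,1)$, $d_c$-exact, with $\omega\in L^Q$ (resp.\ $L^{Q/2}$ if $h=n+1$). Apply the interior $\he{}$-$\mathrm{Poincar\acute{e}}_{Q,\infty}(h)$ on a chain of concentric balls to produce a primitive $\phi_0$ with $d_c\phi_0=\omega$ on a ball $B_\mu$ with $1<\mu<\lambda$, and $\|\phi_0\|_{L^\infty(B_\mu)}\le C\|\omega\|_{L^Q(B)}$. Next I would multiply $\phi_0$ by a smooth cutoff $\chi$ equal to $1$ on a neighbourhood of $\overline B$ and supported in $B_\mu$. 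Because $d_c$ is a first- or second-order differential operator, the Leibniz-type computation for $d_c(\chi\phi_0)$ produces, besides $\omega$ on $\overline B$, an error term $\eta:=d_c(\chi\phi_0)-\omega$ which is supported in the annulus $B_\mu\setminus \overline B$ where $\chi$ is non-constant, is itself $d_c$-exact (being $d_c$ of something), and — crucially — is bounded in $L^\infty$ (indeed even smooth on the annulus) in terms of $\|\phi_0\|_{L^\infty}$ together with derivatives of $\chi$, hence in terms of $\|\omega\|_{L^Q(B)}$; note that for $h=n+1$ the order-two nature of $d_c$ near degree $n$ is what forces the shift to degree $n$ and the exponent $Q/2$ in statement (ii), matching the hypotheses of Theorem~\ref{poincare infty}(ii). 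Now I would invoke the \emph{cohomology of annuli} (the duality ingredient announced in the title, presumably established earlier in the paper or in \cite{BFP2}): the restriction map on $L^\infty$-cohomology between $B_\mu\setminus\overline B$ and a slightly larger annulus, or equivalently a Poincar\'e inequality on the annulus, lets me write $\eta=d_c\psi$ with $\psi$ supported in a neighbourhood of the annulus, still inside $B_\lambda$ and away from $\overline B$, and with $\|\psi\|_{L^\infty}\le C\|\eta\|_{L^\infty}\le C\|\omega\|_{L^Q(B)}$. Then $\phi:=\chi\phi_0-\psi$ satisfies $d_c\phi=\omega$ on $B$ (since $\psi$ vanishes there), is compactly supported in $B_\lambda$, and obeys $\|\phi\|_{L^\infty(B_\lambda)}\le C\|\omega\|_{L^Q(B)}$, which is exactly $\he{}$-$\mathrm{Sobolev}_{Q,\infty}(h)$ (resp.\ with $Q/2$ and degree $n$). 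The global statements then follow by the elementary scaling argument of Remark~\ref{sobolev local global}, the exponents $Q$ and $Q/2$ being precisely the scale-invariant ones.

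The main obstacle, and the step I would spend the most care on, is the annulus correction: one must have a Poincar\'e-type inequality \emph{on an annulus} for the Rumin complex with target $L^\infty$, in the relevant degree, and one must control the support of the corrected primitive $\psi$ so that it stays strictly away from $\overline B$ (so that subtracting it does not disturb $d_c\phi=\omega$ there) yet inside $B_\lambda$. This is where the hypothesis $h\ge 2$ (equivalently $h-1\ge 1$, so that we are not in the scalar endpoint where $\mathrm{Poincar\acute{e}}_{Q,\infty}$ genuinely fails) is used, and where the Bourgain--Brezis phenomenon enters through Theorem~\ref{poincare infty}. A secondary technical point is the bookkeeping for degree $h=n+1$: the order-two jump of $d_c$ means the cutoff computation loses two derivatives rather than one, so the error $\eta$ lives one degree lower (degree $n$) and is estimated against $\|\omega\|_{L^{Q/2}}$ via the Gagliardo--Nirenberg/Sobolev embedding $L^{Q/2}\hookrightarrow$ (second-order) data, which is exactly why (ii) is stated with $p=Q/2$ and the primitive in degree $n$; I would check that the annulus Poincar\'e inequality available in degree $n$ with $L^\infty$ target (again from Theorem~\ref{poincare infty}) is compatible with this shift. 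Apart from these two issues the argument is the by-now standard cutoff-plus-annulus-correction scheme, and I expect the remaining estimates (Leibniz bounds for $\chi\phi_0$, smoothness of $\eta$ on the annulus, the scaling in Remark~\ref{sobolev local global}) to be routine.
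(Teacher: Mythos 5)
Your overall architecture (interior Poincar\'e $\Rightarrow$ cutoff $\Rightarrow$ correction supported in an annulus) matches the paper's, but the step you flag as the crux is exactly where your version breaks, and the fix is a specific idea you are missing. You apply the cutoff $\chi$ to the Poincar\'e primitive $\phi_0$ and then need to solve $d_c\psi=\eta$ for the error $\eta=d_c(\chi\phi_0)-\omega$ with $\psi$ \emph{supported} away from $\overline B$ and inside $B_\lambda$. That is a \emph{Sobolev-type} (compact-support) inequality on the annulus in degree $h$, and it is not what the annulus result of the paper (Proposition \ref{p cohomology k-annuli}) provides: that proposition is Poincar\'e-type, giving a primitive on a smaller annulus with an $L^\infty$ bound but \emph{no support control}. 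The paper circumvents this by shifting down one more degree: since $\alpha$ vanishes on the annulus, the Poincar\'e primitive $\gamma$ (degree $h-1$) is itself $d_c$-closed there, so one applies the annulus cohomology to $\gamma$ to get $\gamma'$ of degree $h-2$ with $d_c\gamma'=\gamma$ on a smaller annulus, and the cutoff $\zeta$ is applied to $\gamma'$, not to $\gamma$. Then $\beta=\gamma-d_c(\zeta\gamma')$ is automatically compactly supported (it vanishes near the outer boundary where $\zeta\equiv1$) and still satisfies $d_c\beta=\alpha$; no support-controlled annulus primitive is ever needed. This degree shift is also the structural reason the hypothesis $h\ge 2$ enters (one needs degree $h-2\ge 0$), not only the failure of the scalar endpoint. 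If you try to repair your $\psi$-step honestly, you are forced to reproduce exactly this construction.

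There is a second concrete failure in your scheme at $h=n+1$: there the cutoff is applied to $\phi_0\in E_0^{n}$, where $d_c$ is second order, so by Lemma \ref{leibniz} the commutator $[d_c,\chi]\phi_0$ contains first-order horizontal derivatives of $\phi_0$; the estimate $\|\eta\|_{L^\infty}\le C\|\phi_0\|_{L^\infty}$ you invoke is false, and the Poincar\'e inequality gives no $W^{1,\infty}$ control of $\phi_0$. In the paper the second-order difficulty instead occurs when $\gamma'$ has degree $n$ (i.e.\ $h=n+2$), and it is resolved because the annulus construction is engineered (via the mapping property \eqref{consiglio2} of the homotopy operator $T$ and Remark \ref{May 8 bis}) to produce $\gamma'\in W^{1,\infty}$, which is exactly what the quantitative Leibniz estimate \eqref{maggio 11 1} requires. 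Your appeal to a ``Gagliardo--Nirenberg embedding of $L^{Q/2}$ into second-order data'' does not supply this. Your claim that $\eta$ is smooth on the annulus is also unjustified ($\phi_0$ is only $L^\infty$), though it is not needed if the argument is set up as in the paper.
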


\begin{remark}\label{h=1} In Euclidean space $\mathbb R^N$ it is well known that the interior
$\mathrm{Poincar\acute{e}}_{N, \infty}(1)$ fails to hold (see e.g. \cite{wheeden},  p. 484),
and has to be replaced with Trudinger's exponential estimate (see \cite{trudinger})
or the more precise Adams-Trudinger inequality (\cite{wheeden}, Theorem 15.30).
Analogous estimates in Heisenberg groups can be found e.g. in  \cite{balogh_manfredi_tyson}, \cite{cohn_lu} (see also \cite{BFP_Catania}).

On the contrary, the statement of Theorem \ref{poincare infty} states the $\he{}$-$\mathrm{Poincar\acute{e}}_{p, \infty}(h)$
holds for $h\ge 2$ with sharp exponent $p=Q$ or $p=Q/2$, according to the degree of the forms. 

We refer to \cite{BB2007}, Theorem 5, for related statements in Euclidean
spaces. 

\end{remark}

\begin{remark}\label{euclidean local} By the way, the proofs presented here can be
carried out (with obvious simplifications) also in the Euclidean setting. In particular,
we can obtain { interior} estimates that are, at least partially, the { interior} counterparts of the
results of \cite{BB2007}.
\end{remark}

\medskip 

Let us give a sketch of the paper.  Section \ref{preliminary}
gathers the basic notions about Heisenberg groups. In particular, in Section \ref{sobolev kernels}
we state several properties of the convolution kernels in $\he n$. These  results are more or less
known, but they have to be handled carefully because of the presence of $L^\infty$-spaces,
precluding density arguments. Subsequently, Section \ref{rumin complex} contains a
minimal presentation of Rumin's complex, with the aim of making the paper as
self-contained as possible. In particular, Lemma \ref{leibniz} deals with Leibniz formula
when the exterior differential $d_c$ of the complex is a differential operator of order 2.
A more extensive presentation of the complex is contained in the Appendix (Section \ref{appendix}).
Finally, Section \ref{sect rumin laplacian} contains some basic properties of Rumin's
Laplacian in $(E_0^\bullet,d_c)$ and of its fundamental solution.

The core of the present paper is contained in Sections \ref{caldo}
-- \ref{proof}
 where the proofs of  Theorems \ref{poincare infty} and \ref{sobolev q infty} are carried out.

More precisely, Section \ref{caldo} produces our proof of Poincar\'e inequalities (Theorem \ref{poincare infty}) which relies both on the formulation by duality of the Poincar\'e inequalites $\he{}$-$\mathrm{Poincar\acute{e}}_{p, \infty}(h)$.
Therein we  prove a relationship between  
$\he{}$-$\mathrm{Sobolev}_{ 1,p'}(2n+2-h) $ and $\he{}$-$\mathrm{Poincar\acute e}_{p,\infty}(h) $
when $1<p, p'<\infty$ are dual exponents, and eventually we  combine this relationship
with the previous duality argument for Poincar\'e inequalites.

Contrary to what happens when  $1\le p<Q$ (or $1\le p<Q/2$), where the proofs of Poincar\'e and Sobolev inequalities proceeded on parallel tracks, here the proof of Sobolev inequalities require more delicate arguments: first we prove 
a $L^\infty$-homotopy formula (see Section \ref{section homotopy infty}) and then we derive 
interior $\he{}$-$\mathrm{Poincar\acute{e}}_{\infty, \infty}$ and $\he{}$-$\mathrm{Sobolev}_{\infty, \infty}$
inequalities. Then the subsequent step (Section \ref{homotopy annuli}) consists in proving that the $L^{\infty,\infty}$ cohomology
of Rumin's forms vanishes on a suitable family of (Cygan-Kor\'anyi) annuli. Finally,
Section \ref{proof} contains $L^\infty$-estimates associated with Leibniz formula and hence 
ends with the proof of Sobolev inequalities stated in Theorem \ref{sobolev q infty}.

\section{Heisenberg groups: definitions and preliminary results}\label{preliminary}

 We denote by  $\he n$  the $n$-dimensional Heisenberg
group, identified with $\rn {2n+1}$ through exponential
coordinates. A point $p\in \he n$ is denoted by
$p=(x,y,t)$, with both $x,y\in\rn{n}$
and $t\in\R$.
   If $p$ and
$p'\in \he n$,   the group operation is defined by
\begin{equation*}
p\cdot p'=(x+x', y+y', t+t' + \frac12 \sum_{j=1}^n(x_j y_{j}'- y_{j} x_{j}')).
\end{equation*}
The unit element of $\he n$ is the origin, that will be denote by $e$.
For
any $q\in\he n$, the {\it (left) translation} $\tau_q:\he n\to\he n$ is defined
as $$ p\mapsto\tau_q p:=q\cdot p. $$
The Lebesgue measure in $\mathbb R^{2n+1}$ 
is a Haar measure in $\he n$.

For a general review on Heisenberg groups and their properties, we
refer to \cite{Stein}, \cite{GromovCC} and to \cite{VarSalCou}.
We limit ourselves to fix some notations, following \cite{FSSC_advances}.

The Heisenberg group $\he n$ can be endowed with the homogeneous
norm (Cygan-Kor\'anyi norm)
\begin{equation}\label{gauge}
\varrho (p)=\big(|p'|^4+ 16\, p_{2n+1}^2\big)^{1/4},
\end{equation}
and we define the gauge distance (a true distance, see
 \cite{Stein}, p.\,638), that is left invariant i.e. $d(\tau_q p,\tau_q p')=d(p,p' )$ for all $p,p'\in\he n$)
as
\begin{equation}\label{def_distance}
d(p,q):=\varrho ({p^{-1}\cdot q}).
\end{equation}
Finally, the balls for the metric $d$ are le so-called Kor\'anyi balls
\begin{equation}\label{koranyi}
B(p,r):=\{q \in  \he n; \; d(p,q)< r\}.
\end{equation}

Notice that Kor\'anyi balls are star-shaped with respect to the origin and convex smooth sets.

A straightforward computation shows that there exists $c_0>1$ such that
\begin{equation}\label{c0}
c_0^{-2} |p| \le \rho(p) \le |p|^{1/2},
\end{equation}
provided $p$ is close to $e$.
In particular, for $r>0$ small, if we denote by $B_{\mathrm{Euc}}(e,r)$ the Euclidean ball centred at $e$ of radius $r$,
\begin{equation}\label{balls inclusion}
B_{\mathrm{Euc}}(e,r^2) \subset B(e,r) \subset B_{\mathrm{Euc}}(e, c_0^2 r).
\end{equation}

    We denote by  $\mfrak h$
 the Lie algebra of the left
invariant vector fields of $\he n$. The standard basis of $\mfrak
h$ is given, for $i=1,\dots,n$,  by
\begin{equation*}
X_i := \partial_{x_i}-\frac12 y_i \partial_{t},\quad Y_i :=
\partial_{y_i}+\frac12 x_i \partial_{t},\quad T :=
\partial_{t}.
\end{equation*}
The only non-trivial commutation  relations are $
[X_{i},Y_{i}] = T $, for $i=1,\dots,n.$ 
The {\it horizontal subspace}  $\mfrak h_1$ is the subspace of
$\mfrak h$ spanned by $X_1,\dots,X_n$ and $Y_1,\dots,Y_n$:
${ \mfrak h_1:=\mathrm{span}\,\left\{X_1,\dots,X_n,Y_1,\dots,Y_n\right\}\,.}$

\noindent Coherently, from now on, we refer to $X_1,\dots,X_n,Y_1,\dots,Y_n$
(identified with first order differential operators) as
the {\it horizontal derivatives}. Denoting  by $\mfrak h_2$ the linear span of $T$, the $2$-step
stratification of $\mfrak h$ is expressed by
\begin{equation*}
\mfrak h=\mfrak h_1\oplus \mfrak h_2.
\end{equation*}

\bigskip

The stratification of the Lie algebra $\mfrak h$ induces a family of non-isotropic dilations 
$\delta_\lambda: \he n\to\he n$, $\lambda>0$ as follows: if
$p=(x,y,t)\in \he n$, then
\begin{equation}\label{dilations}
\delta_\lambda (x,y,t) = (\lambda x, \lambda y, \lambda^2 t).
\end{equation}
Notice that the gauge norm \eqref{gauge} is positively $\delta_\lambda$-homogenous,
so that the Lebesgue measure of the ball $B(x,r)$ is $r^{2n+2}$ up to a geometric constant
(the Lebesgue measure of $B(e,1)$).

Thus, the {\sl homogeneous dimension}  of $\he n$
with respect to $\delta_\lambda$, $\lambda>0$, equals 
$$
Q:=2n+2.
$$
It is well known that the topological dimension of $\he n$ is $2n+1$,
since as a smooth manifold it coincides with $\R^{2n+1}$, whereas
the Hausdorff dimension of $(\he n,d)$ is $Q$.

The vector space $ \mfrak h$  can be
endowed with an inner product, indicated by
$\scalp{\cdot}{\cdot}{} $,  making
    $X_1,\dots,X_n$,  $Y_1,\dots,Y_n$ and $ T$ orthonormal.
    
Throughout this paper, we write also
\begin{equation}\label{campi W}
W_i:=X_i, \quad W_{i+n}:= Y_i\quad { \mathrm{and} } \quad W_{2n+1}:= T, \quad \text
{for }i =1, \dots, n.
\end{equation}

As in \cite{folland_stein},
we also adopt the following multi-index notation for higher-order derivatives. If $I =
(i_1,\dots,i_{2n+1})$ is a multi--index, we set  
\begin{equation}\label{WI}
W^I=W_1^{i_1}\cdots
W_{2n}^{i_{2n}}\;T^{i_{2n+1}}.
\end{equation}
By the Poincar\'e--Birkhoff--Witt theorem, the differential operators $W^I$ form a basis for the algebra of left invariant
differential operators in $\he n$. 
Furthermore, we set 
$$
|I|:=i_1+\cdots +i_{2n}+i_{2n+1}$$
 the order of the differential operator
$W^I$, and   
$$d(I):=i_1+\cdots +i_{2n}+2i_{2n+1}$$
 its degree of homogeneity
with respect to group dilations.

The dual space of $\mfrak h$ is denoted by $\covH 1$.  The  basis of
$\covH 1$,  dual to  the basis $\{X_1,\dots , Y_n,T\}$,  is the family of
covectors $\{dx_1,\dots, dx_{n},dy_1,\dots, dy_n,\theta\}$ where 
$$ \theta
:= dt - \frac12 \sum_{j=1}^n (x_jdy_j-y_jdx_j)$$ is  the {\it contact
form} in $\he n$. 
We denote by $\scalp{\cdot}{\cdot}{} $ the
inner product in $\covH 1$  that makes $(dx_1,\dots, dy_{n},\theta  )$ 
an orthonormal basis.

\medskip

\subsection{Sobolev spaces, distributions and kernels in $\he n$}\label{sobolev kernels}

Let  $U\subset \he n$ be an open set. We shall use the following classical notations:
$\mc E(U)$ is the space of all smooth function on $U$,
and $\mc D(U)$  is the space of all compactly supported smooth functions on $U$,
endowed with the standard topologies (see e.g. \cite{treves}).
The spaces $\mc E'(U)$ and $\mc D'(U)$ are their
dual spaces of distributions. 

Let $1\le p \le\infty$
and $m\in\mathbb N$,  $W^{m,p}_{\mathrm{Euc}}(U)$
denotes the usual Sobolev space. 

We remind also the notion of 
(integer order) Folland-Stein Sobolev space (for a general presentation, see e.g. \cite{folland} and \cite{folland_stein}).

\begin{definition}\label{integer spaces} If $U\subset \he n$ is an open set, $1\le p \le\infty$
and $m\in\mathbb N$, then
the space $W^{m,p}(U)$
is the space of all $u\in L^p(U)$ such that, with the notation of \eqref{WI},
$$
W^Iu\in L^p(U)\quad\mbox{for all multi-indices $I$ with } d(I) \le m,
$$
endowed with the natural norm  
$$\|\,u\|_{W^{k,p}(U)}
:= \sum_{d(I) \le m} \|W^I u\|_{L^p(U)}.
$$

\end{definition}

{  Folland-Stein Sobolev spaces enjoy the following properties akin to those
of the usual Euclidean Sobolev spaces (see \cite{folland}, and, e.g. \cite{FSSC_houston}).}

\begin{theorem} If $U\subset \he n$, $1\le p \le  \infty$, and $k\in\mathbb N$, then
\begin{itemize}
\item[i)] $ W^{k,p}(U)$ is a Banach space.
\end{itemize}
In addition, if $p<\infty$,
\begin{itemize}
\item[ii)] $ W^{k,p}(U)\cap C^\infty (U)$ is dense in $ W^{k,p}(U)$;
\item[iii)] if $U=\he n$, then $\mc D(\he n)$ is dense in $ W^{k,p}(U)$;
\item[iv)]  if $1<p<\infty$, then $W^{k,p}(U)$ is reflexive.
\end{itemize}

\end{theorem}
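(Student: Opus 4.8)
The plan is to prove the four properties of the Folland--Stein Sobolev spaces $W^{k,p}(U)$ in turn, paralleling the classical Euclidean arguments but taking care that the horizontal derivatives $W_j$ are left-invariant first-order operators whose brackets generate $T$, so that $W^I$ with $d(I)\le m$ really does capture all derivatives of homogeneous degree at most $m$. First I would establish (i), completeness: given a Cauchy sequence $(u_\ell)$ in $W^{k,p}(U)$, for each multi-index $I$ with $d(I)\le m$ the sequence $W^I u_\ell$ is Cauchy in $L^p(U)$, hence converges to some $v_I\in L^p(U)$; writing $u:=v_0$, one checks by testing against $\varphi\in\mathcal D(U)$ and integrating by parts (the formal adjoint of $W_j$ in $\he n$ is again a first-order left-invariant operator, since the $W_j$ are divergence-free) that $W^I u=v_I$ in $\mathcal D'(U)$, so $u\in W^{k,p}(U)$ and $u_\ell\to u$ in norm. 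This is the routine part.

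For (ii), density of smooth functions when $p<\infty$, I would use the Friedrichs mollification adapted to the group: convolve $u$ (extended suitably, or worked with locally via a partition of unity subordinate to an exhaustion of $U$) with an approximate identity $\psi_\varepsilon$, using the fact that left group convolution commutes with left-invariant vector fields, $W_j(u*\psi_\varepsilon)=(W_j u)*\psi_\varepsilon$, hence $W^I(u*\psi_\varepsilon)=(W^I u)*\psi_\varepsilon$; then $W^I(u*\psi_\varepsilon)\to W^I u$ in $L^p_{\mathrm{loc}}$ for every $I$ with $d(I)\le m$, and a standard cutoff/diagonal argument upgrades this to global convergence in $W^{k,p}(U)$. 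Statement (iii) is the special case $U=\he n$: here no partition of unity is needed, and after mollifying one multiplies by cutoff functions $\chi(\delta_{1/R}\,\cdot\,)$ whose horizontal derivatives are $O(1/R)$ and whose $T$-derivatives are $O(1/R^2)$ — exactly matching the homogeneity weights in $d(I)$ — so the truncation error tends to $0$ in $W^{k,p}$, giving density of $\mathcal D(\he n)$. For (iv), reflexivity when $1<p<\infty$, I would realize $W^{k,p}(U)$ isometrically as a closed subspace of the finite product $\prod_{d(I)\le m} L^p(U)$ via $u\mapsto (W^I u)_{d(I)\le m}$; since finite products of reflexive spaces are reflexive and closed subspaces of reflexive spaces are reflexive, the claim follows.

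The main obstacle — really the only point requiring genuine care rather than transcription of the Euclidean proof — is the cutoff step in (ii) and (iii): one must choose cutoff functions whose derivatives scale correctly with respect to the anisotropic dilations $\delta_\lambda$, so that differentiating the product $\chi u$ and applying Leibniz does not produce terms that blow up; the homogeneous degree $d(I)$ (as opposed to the order $|I|$) is precisely what makes the bookkeeping work, since a factor $T$ costs two units of decay in $\chi$ but also counts as two units in $d(I)$. Everything else reduces to the commutation of left convolution with left-invariant fields and to standard functional analysis; these facts, together with the approximation properties of convolution kernels recorded in Section~\ref{sobolev kernels}, make the remaining estimates routine.
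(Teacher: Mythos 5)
The paper does not prove this theorem at all: it simply records these facts with a pointer to the literature (Folland's paper and \cite{FSSC_houston}), so there is no in-paper argument to compare against. Your proposal supplies the standard self-contained proof, and its overall architecture is sound: completeness via distributional identification of the $L^p$-limits $v_I$ with $W^I u$ (using that each $W_j$ is divergence-free, so its formal adjoint is $-W_j$); density by group mollification plus partition of unity; truncation with anisotropic cutoffs $\chi\circ\delta_{1/R}$ whose $T$-derivative decays like $R^{-2}$, matching the weight $2$ that $T$ carries in $d(I)$; and reflexivity by embedding $u\mapsto (W^I u)_{d(I)\le k}$ onto a closed subspace of a finite product of $L^p$ spaces. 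Points (i) and (iv) are complete as written, and the scaling bookkeeping in (iii) is exactly the right observation.

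The one step you must fix is the direction of the mollifying convolution in (ii)--(iii). With the paper's convention $f\ast g(p)=\int f(q)g(q^{-1}p)\,dq$, a \emph{left-invariant} operator $L$ satisfies $L(f\ast g)=f\ast Lg$, i.e.\ the derivative falls on the \emph{right-hand} factor. Hence $W_j(u\ast\psi_\varepsilon)=u\ast W_j\psi_\varepsilon$, and the identity you wrote, $W_j(u\ast\psi_\varepsilon)=(W_ju)\ast\psi_\varepsilon$, is false in a noncommutative group. The correct move is to put the mollifier on the left: $W^I(\psi_\varepsilon\ast u)=\psi_\varepsilon\ast W^I u$, and then use that $\psi_\varepsilon\ast v\to v$ in $L^p$ for $p<\infty$ when $\psi_\varepsilon$ is an approximate identity. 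This is a one-line repair, but as stated the mollification step does not go through; everything downstream (the cutoff and diagonal arguments) is unaffected once the convolution is taken on the correct side.
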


\begin{theorem}\label{donne}[see \cite{folland}, Theorem 5.15] If $p>Q$, then 
$$
W^{1,p}(\he n) \subset L^\infty (\he n)
$$
algebraically and topologically.
\end{theorem}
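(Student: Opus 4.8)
The plan is to reduce the theorem to an a priori pointwise bound for test functions and then remove the regularity by density. By the preceding theorem $\mathcal{D}(\he n)$ is dense in $W^{1,p}(\he n)$ whenever $p<\infty$; since the case $p=\infty$ is trivial, it suffices to prove that there is a constant $C=C(p,Q)$ with
\[
\|u\|_{L^\infty(\he n)}\le C\,\|u\|_{W^{1,p}(\he n)}\qquad\text{for every }u\in\mathcal{D}(\he n).
\]
Once this is available, applying it to $u_j-u_k$ along a sequence $u_j\to u$ in $W^{1,p}(\he n)$ shows that $(u_j)$ is Cauchy in $L^\infty$, hence converges uniformly to a bounded continuous function which agrees a.e.\ with $u$ (a subsequence of $(u_j)$ converges a.e.\ to $u$). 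This yields both the set-theoretic inclusion $W^{1,p}(\he n)\subset L^\infty(\he n)$ and the continuity of the inclusion, i.e.\ the statement ``algebraically and topologically''.

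For the a priori bound I would invoke Folland's fundamental solution $G$ of the sub-Laplacian $\Delta_H:=\sum_{i=1}^{2n}W_i^2$: the function $G$ is smooth on $\he n\setminus\{e\}$ and $\delta_\lambda$-homogeneous of degree $2-Q$ — in fact $G=c_Q\,\varrho^{\,2-Q}$ — so each $W_iG$ is smooth off $e$ and $\delta_\lambda$-homogeneous of degree $1-Q$, whence $|W_iG(z)|\le C\,d(e,z)^{1-Q}$. The classical Folland--Stein representation formula then writes any $u\in\mathcal{D}(\he n)$ as a sum of convolutions of the horizontal derivatives $W_iu$ against kernels of homogeneity $1-Q$, and in particular gives the potential estimate
\[
|u(x)|\le C\int_{\he n}|\nabla_H u(y)|\,d(x,y)^{1-Q}\,dy .
\]
On the whole group this is not yet enough, because $y\mapsto d(x,y)^{1-Q}$ fails to be $p'$-integrable at infinity; the $L^p$-norm of $u$ has to be brought in, and this is done by a cut-off.

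So, fix $x\in\he n$, choose $\psi\in\mathcal{D}(B(e,2))$ with $\psi\equiv1$ on $B(e,1)$ and $|\nabla_H\psi|\le C$, and set $\psi_x:=\psi\circ\tau_{x^{-1}}$, so that $\psi_x u\in\mathcal{D}(\he n)$ coincides with $u$ on $B(x,1)$ and is supported in $B(x,2)$. Applying the potential estimate to $\psi_x u$ and using $\nabla_H(\psi_x u)=\psi_x\,\nabla_H u+u\,\nabla_H\psi_x$ gives
\[
|u(x)|=|(\psi_x u)(x)|\le C\int_{B(x,2)}\big(|\nabla_H u(y)|+|u(y)|\big)\,d(x,y)^{1-Q}\,dy .
\]
Hölder's inequality with exponents $p,p'$ then yields
\[
|u(x)|\le C\,\Big(\int_{B(x,2)}d(x,y)^{(1-Q)p'}\,dy\Big)^{1/p'}\,\|u\|_{W^{1,p}(\he n)},
\]
and by left invariance and $\delta_\lambda$-homogeneity the first factor equals $\big(\int_{B(e,2)}\varrho(z)^{(1-Q)p'}\,dz\big)^{1/p'}$, which is finite because $\varrho^{-\beta}$ is locally integrable near $e$ exactly when $\beta<Q$, and $(Q-1)p'<Q$ is equivalent to $p>Q$. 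Taking the supremum over $x$ gives the a priori bound, and the density argument of the first paragraph concludes.

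The only point requiring genuine care is the input from Folland--Stein harmonic analysis on stratified groups: the existence and exact homogeneity of the fundamental solution of $\Delta_H$, the size estimates for the kernels occurring in the representation formula, and the associated bookkeeping with the non-commutative convolution. All of this is classical; the remaining ingredients — localization, Hölder's inequality, density — are the standard Morrey mechanism, which carries over verbatim, with obvious simplifications, to the Euclidean embedding $W^{1,p}(\mathbb R^N)\subset L^\infty(\mathbb R^N)$ for $p>N$.
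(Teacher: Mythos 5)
The paper does not prove this statement; it is quoted from Folland (Theorem 5.15), so there is no internal proof to compare against. Your argument is correct and is essentially the standard one for this embedding on stratified groups: the reduction by density to $u\in\mathcal{D}(\he n)$ is legitimate (using part (iii) of the preceding theorem, with the $p=\infty$ case trivial), the potential estimate $|u(x)|\le C\int|\nabla_H u(y)|\,d(x,y)^{1-Q}\,dy$ via the fundamental solution of the sub-Laplacian is the classical Folland--Stein representation formula, and the localization plus H\"older step correctly isolates the condition $(Q-1)p'<Q\iff p>Q$ that makes $\varrho^{(1-Q)p'}$ locally integrable. The only ingredient you take on faith --- the representation formula and the bookkeeping with right- versus left-invariant derivatives in the non-commutative convolution --- is indeed classical, so I see no gap.
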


\begin{definition} { If $U\subset \he n$ is open} and if
$1\le p<\infty$,
we denote  by $\WO{k}{p}{U}$
the completion of $\mc D(U)$ in $W^{k,p}(U)$.
\end{definition}
{  \begin{remark}
If $U\subset \he n$ is bounded, by (iterated) Poincar\'e inequality (see e.g. \cite{jerison}), it follows that the norms
\begin{equation*}
\|u\|_{W^{k,p}(U)} \quad\mbox{and}\quad
\sum_{d(I)=k}\| W^I u\|_{ L^p(U)}
\end{equation*}
are equivalent on $\WO{k}{p}{U}$ when $1\le p<\infty$.
\end{remark}
}

Again as in \cite{folland_stein} it is possible to associate with the group structure
 a convolution (still denoted by $\ast$): if, for instance, $f\in\mc D(\he n)$ and
$g\in L^1_{\mathrm{loc}}(\he n)$, we set
\begin{equation}\label{group convolution}
f\ast g(p):=\int f(q)g(q^{-1}p)\,dq\quad\mbox{for $q\in \he n$}.
\end{equation}

\medskip

We remind that, if (say) $g$ is a smooth function and $L$
is a left invariant differential operator, then
$
L(f\ast g)= f\ast Lg.
$
We remind also that the convolution is again well defined
when $f,g\in\mc D'(\he n)$, provided at least one of them
has compact support (as customary, we denote by
$\mc E'(\he n)$ the class of compactly supported distributions
in $\he n$ identified with $\rn {2n+1}$). In this case the following identities
hold
\begin{equation}\label{convolutions var}
\Scal{f\ast g}{\phi} = \Scal{g}{\ccheck f\ast\phi}
\quad
\mbox{and}
\quad
\Scal{f\ast g}{\phi} = \Scal{f}{\phi\ast \ccheck g}
\end{equation}
 for any test function $\phi$ 
(if $f$ is a real function defined in $\he n$, we denote
    by $\ccheck f$ the function defined by $\ccheck f(p):=
    f(p^{-1})$ and, if $T\in\mc D'(\he n)$, then $\ccheck T$
    is the distribution defined by $\Scal{\ccheck T}{\phi}
    :=\Scal{T}{\ccheck\phi}$ for any test function $\phi$).

 Suppose now $f\in\mc E'(\he n)$ and $g\in\mc D'(\he n)$. Then,
 if $\psi\in\mathcal D(\he n)$, we have
 \begin{equation}\label{convolution by parts}
 \begin{split}
\Scal{(W^If)\ast g}{\psi}&=
 \Scal{W^If}{\psi\ast \ccheck g} =
  (-1)^{|I|}  \Scal{f}{\psi\ast (W^I \,\ccheck g)} \\
&=
 (-1)^{|I|} \Scal{f\ast \ccheck W^I\,\ccheck g}{\psi}.
\end{split}
\end{equation}

\medskip

\begin{proposition}\label{treves} We have:
\begin{enumerate}
\item if $\phi \in \mc D(\he n)$ and $T\in \mc D'(\he n)$, then $\phi \ast T\in \mc E(\he n)$
(see \cite{treves}, Theorem 7.23);
\item the convolution maps $\mc E(\he n)\times \mc E'(\he n)$ into $\mc E(\he n)$ (see \cite{treves}, p. 288);
\item the map $(S,T) \to S\ast T$ defined by
$$
\Scal{S\ast T}{\phi}_{\mc D', \mc D} =: \Scal{S}{\phi \ast \ccheck T}_{\mc E', \mc E} 
$$
is a separately continuous bilinear map from $\mc E'(\he n)\times \mc D'(\he n)$ to $\mc D'(\he n)$;
\item if $T\in \mc E'(\he n)$, and $P$ is a differential operator, then $PT\in \mc E'(\he n)$ and $\supp PT \subset \supp T$
(see \cite{treves}, 24.3);
\item let $U,U'\subset \he n$ be open sets, $U\subset U'$. If $T\in\mc D'(U')$, we define its restriction
$$
T_{\big|_{U}} \in \mc D'(U)
$$
in the sense of \cite{treves}, Example II pag. 245, i.e. for all $\phi\in \mc D(U)\subset \mc D(U')$ we set
$$
\Scal{T_{\big|_{U}}}{\phi} := \Scal{T}{\phi}.
$$
\item let $U,U'\subset \he n$ be open sets, $U\Subset U'$. Let $\beta, \hat{\beta}\in \mc E' (\he n)$ be such that
$\hat{\beta}_{\big|_{U'}} = {\beta}_{\big|_{U'}}$. If $k\in L^1 (\he n)$ and $ \supp k \subset B(e,R) $ with $R>0$
small enough, then 
$$
\big(\beta\ast k\big)_{\big|_{U}} = \big( \hat{\beta}\ast k \big)_{\big|_{U}}.
$$
\end{enumerate}

\end{proposition}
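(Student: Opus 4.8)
The statement gathers six facts; items (1)--(5) are recorded with references and require nothing further, so the plan concerns only item (6). Set $\gamma:=\beta-\hat\beta\in\mc E'(\he n)$. From the defining formula in item (3) one has $\langle(\beta-\hat\beta)\ast k,\phi\rangle=\langle\beta\ast k,\phi\rangle-\langle\hat\beta\ast k,\phi\rangle$ for every $\phi\in\mc D(\he n)$, so convolution is linear in the first slot and the claim reduces to showing: if $\gamma\in\mc E'(\he n)$ with $\gamma_{\big|_{U'}}=0$ (equivalently $\supp\gamma\subset\he n\setminus U'$) and $\supp k\subset B(e,R)$ with $R$ small, then $(\gamma\ast k)_{\big|_{U}}=0$. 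Since $k\in L^1(\he n)$ has compact support it defines an element of $\mc E'(\he n)$, hence $\gamma\ast k$ is a well-defined element of $\mc E'(\he n)$ by item (3), and it remains to prove that its support misses $U$.

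The crux is the support estimate $\supp(\gamma\ast k)\subset(\supp\gamma)\cdot(\supp k)$. To get it, take $\phi\in\mc D(\he n)$ with $\supp\phi\cap\big((\supp\gamma)\cdot(\supp k)\big)=\emptyset$. By item (3), $\langle\gamma\ast k,\phi\rangle=\langle\gamma,\phi\ast\ccheck k\rangle$, and $\phi\ast\ccheck k\in\mc E(\he n)$ by item (1). Computing from \eqref{group convolution}, $(\phi\ast\ccheck k)(p)=\int\phi(q)\,k(p^{-1}q)\,dq$, whence $\supp(\phi\ast\ccheck k)\subset(\supp\phi)\cdot(\supp k)^{-1}$. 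This last set is disjoint from $\supp\gamma$: if $g=s\cdot a^{-1}$ with $g\in\supp\gamma$, $s\in\supp\phi$, $a\in\supp k$, then $s=g\cdot a\in(\supp\gamma)\cdot(\supp k)$, contradicting the choice of $\phi$. Hence $\langle\gamma,\phi\ast\ccheck k\rangle=0$, so $\langle\gamma\ast k,\phi\rangle=0$. As $(\supp\gamma)\cdot(\supp k)$ is compact (a continuous image of a compact set), it is closed, and the support estimate follows.

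To conclude: since $\overline U$ is compact, $\he n\setminus U'$ is closed, and $\overline U\subset U'$, the quantity $2\delta:=\operatorname{dist}(\overline U,\he n\setminus U')$ is strictly positive for the gauge distance $d$, which is left invariant and symmetric because $\varrho(p^{-1})=\varrho(p)$ by \eqref{gauge}. Choose $R<2\delta$. If $p\in(\supp\gamma)\cdot(\supp k)\subset(\supp\gamma)\cdot B(e,R)$, write $p=g\cdot b$ with $g\in\supp\gamma\subset\he n\setminus U'$ and $b\in B(e,R)$; then $d(g,p)=\varrho(g^{-1}p)=\varrho(b)<R<2\delta$, forcing $p\notin\overline U\supset U$. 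Therefore $\supp(\gamma\ast k)\cap U=\emptyset$, i.e. $(\gamma\ast k)_{\big|_{U}}=0$, which gives $(\beta\ast k)_{\big|_{U}}=(\hat\beta\ast k)_{\big|_{U}}$. The only point that needs care is bookkeeping with the non-commutative group structure in the support computations — in particular the appearance of $(\supp k)^{-1}$ — together with the symmetry of $d$; everything else is routine.
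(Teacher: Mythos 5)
Your proof is correct and uses essentially the same mechanism as the paper's: transpose via $\Scal{\beta\ast k}{\phi}=\Scal{\beta}{\phi\ast\ccheck k}$ and observe that, for $\phi\in\mc D(U)$ and $R$ small, the relevant supports (you track $\supp\big((\beta-\hat\beta)\ast k\big)$, the paper tracks $\supp(\phi\ast\ccheck k)$ staying in an $R$-neighborhood of $U\subset U'$) force the two pairings to coincide. The extra care you take with the non-commutative support bookkeeping (the $(\supp k)^{-1}$ factor, symmetry of $\varrho$) is sound and only makes explicit what the paper leaves implicit.
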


\begin{proof} Let us prove (6). Take $\phi\in \mc D(U)$ and assume $R< \mathrm{dist}\, (U, \partial U')$. Then
\begin{equation*}\begin{split}
&\Scal{\beta\ast k}{\phi} = \Scal{\beta}{\phi\ast\ccheck k}
\\&
\hphantom{xxx}  =\Scal{\hat\beta}{\phi\ast\ccheck k}\qquad\mbox{(since $\supp \phi\ast\ccheck k$
is contained in a $R$-neighborhood of $U$)}
\\&
\hphantom{xxx}  = \Scal{\hat\beta\ast k}{\phi}.
\end{split}\end{equation*}

\end{proof}

\begin{theorem}  \label{folland cont} We have:
\begin{itemize}
         \item[i)] Hausdorff-Young's inequality holds, i.e.,  if $f\in L^p(\he n)$, $g\in L^q(\he n)$, $1\le p, q,r \le \infty$ and $\frac1p + \frac1q - 1 = \frac1r$, then $f\ast g\in L^r(\he n)$
         (see \cite{folland_stein}, Proposition 1.18) .
	\item[ii)] If $K$ is a kernel of type $0$, $1<p<\infty$, $s\ge 0$, then the mapping $T:u\to u\ast K$ defined for $u\in\mc D(\he n)$ extends to a bounded operator on $W^{s,p}(\he n)$
	(see \cite{folland}, Theorem 4.9).
	\item[iii)]Suppose $0<\mu <Q$, $1<p<Q/\mu$ and $\frac{1}{q}=\frac{1}{p}-\frac{\mu}{Q}$. Let $K$ be a kernel of type $\mu$. If $u\in L^p(\he n)$ the convolutions $u\ast K$ and $K\ast u$ exists a.e. and are in $L^q(\he n)$ and there is a constant $C_p>0$ such that 
	$$
	\|u\ast K\|_q\le C_p\|u\|_p\quad \mathrm{and}\quad \| K\ast u\|_q\le C_p\|u\|_p\,
	$$
	(see \cite{folland}, Proposition 1.11).
	\item[iv)] Suppose  $s\ge 1$, $1<p<Q$, and let $U$ be a bounded open set. If $K$ is a kernel of type $1$ and $u\in W^{s-1,p}(\he n)$  with $\supp u \subset U$, then 
	$$
	\|u\ast K\|_{W^{s,p}(\he n)} \le C_{U} \|u\|_{W^{s-1,p}}(\he n).
	$$
\end{itemize} 

\end{theorem}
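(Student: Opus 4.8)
The plan is to prove statement~(iv) only, items (i)--(iii) being the quoted results of Folland and Folland--Stein. Fix $u\in W^{s-1,p}(\he n)$ with $\supp u\subset U$; I must show $W^J(u\ast K)\in L^p(\he n)$ with $\|W^J(u\ast K)\|_{L^p}\le C_U\,\|u\|_{W^{s-1,p}}$ for every multi-index $J$ with $d(J)\le s$.

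First I would reduce to horizontal derivatives. Since $T=[X_1,Y_1]$ is central in $\mfrak h$, every left-invariant differential operator $W^J$ of homogeneous degree $d(J)=k$ is a finite linear combination of products $W_{l_1}\cdots W_{l_k}$ of exactly $k$ horizontal left-invariant vector fields (each factor $T$ being rewritten as $X_1Y_1-Y_1X_1$). Thus it is enough to bound $\|W_{l_1}\cdots W_{l_k}(u\ast K)\|_{L^p(\he n)}$ for $0\le k\le s$. For $k\ge 1$ I would move the first differentiation onto the kernel, using left-invariance,
\begin{equation*}
W_{l_1}\cdots W_{l_k}(u\ast K)=W_{l_1}\cdots W_{l_{k-1}}\bigl(u\ast(W_{l_k}K)\bigr),
\end{equation*}
and invoke the standard fact of the Folland--Stein calculus that $W_{l_k}K$ is a kernel of type $0$ when $K$ is a kernel of type $1$ and $W_{l_k}$ is horizontal (differentiation by a degree-one left-invariant field lowers the type by one). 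By part~(ii), $v:=u\ast(W_{l_k}K)$ then lies in $W^{s-1,p}(\he n)$ with $\|v\|_{W^{s-1,p}}\le C\|u\|_{W^{s-1,p}}$. Now $W_{l_1}\cdots W_{l_{k-1}}$ has homogeneous degree $k-1\le s-1$, and re-ordering it into the standard basis $\{W^I\}$ by the commutation relations (which only produce operators $W^I$ with $d(I)\le k-1$) yields $\|W_{l_1}\cdots W_{l_{k-1}}v\|_{L^p}\le\|v\|_{W^{k-1,p}}\le\|v\|_{W^{s-1,p}}$, which closes these cases.

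There remains the case $k=0$, that is, the term $u\ast K$ itself, which also enters the $W^{s,p}$-norm. Here I would apply part~(iii) with $\mu=1$ (legitimate since $1<p<Q$) to get $u\ast K\in L^{pQ/(Q-p)}(\he n)$, and combine this with H\"older's inequality on a fixed Kor\'anyi ball $B^\ast\supset\supp u$ and, on the complement of $B^\ast$, with the pointwise decay estimate $|u\ast K(p)|\lesssim\|u\|_{L^1(U)}\,d(p,U)^{1-Q}$ coming from the smoothness and homogeneity of $K$ away from the origin, to conclude $\|u\ast K\|_{L^p(\he n)}\le C_U\|u\|_{L^p}$. This $k=0$ term is the only place where the compact support of $u$ is used (and where the $U$-dependence of the constant enters, through $|B^\ast|$), and it is the step I expect to be the most delicate: controlling $u\ast K$ globally in $L^p(\he n)$, rather than merely in $L^p_{\mathrm{loc}}$, is precisely what breaks down without a support restriction on $u$. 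Everything else is the routine ``type-one kernels gain one Folland--Stein derivative'' mechanism, reduced by~(ii) to the $L^p$-boundedness of Calder\'on--Zygmund (type-zero) kernels.
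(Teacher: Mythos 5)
Your handling of the derivative terms coincides with the paper's own proof: the paper likewise reduces via $\|u\ast K\|_{W^{s,p}}\le C\{\|u\ast K\|_{L^p}+\sum_\ell\|u\ast W_\ell K\|_{W^{s-1,p}}\}$, uses that a horizontal derivative of a kernel of type $1$ is a kernel of type $0$, and then applies item ii). That part of your argument is correct and is essentially the paper's.

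The problem is the $k=0$ term, the global bound $\|u\ast K\|_{L^p(\he n)}\le C_U\|u\|_{L^p}$. Your H\"older step near the support is fine (item iii) gives $u\ast K\in L^{pQ/(Q-p)}$ with norm $\le C\|u\|_{L^p}$, and on the bounded set $B^\ast$ the higher Lebesgue norm controls the lower one). But on the complement of $B^\ast$ the decay $|u\ast K(x)|\lesssim\|u\|_{L^1(U)}\,\rho(x)^{1-Q}$ produces a function that is $p$-integrable at infinity only when $(Q-1)p>Q$, since $\int_{\rho>R}\rho^{(1-Q)p}\,d\mathcal L^{2n+1}\sim\int_R^\infty r^{(1-Q)p+Q-1}\,dr$; for $1<p\le Q/(Q-1)$ this diverges and your argument does not close. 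Nor can it be repaired: if $u\ge0$ is a nontrivial bump and $K$ is a positive homogeneous kernel of type $1$ (no cancellation is imposed on kernels of positive type), then $u\ast K$ is genuinely comparable to $\rho^{1-Q}$ at infinity and so fails to lie in $L^p(\he n)$ for $p\le Q/(Q-1)$. So the statement must either be read with the left-hand norm taken on a bounded set or with the range of $p$ restricted. To be fair, the paper's own proof of this term is no better — it passes from $\|u\ast K\|_{L^p(\he n)}$ to $\|u\|_{L^{pQ/(Q-p)}}$ by an application of iii) whose exponents do not match, and then absorbs this into $\|u\|_{W^{s-1,p}}$ — so the step you flagged as ``the most delicate'' is indeed where the lemma, as stated, is problematic.
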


\begin{proof} The proof of iv) can be carried out relying on Theorems 4.10, 4.9 and Proposition 1.11 of \cite{folland}, keeping into account that
$L^{pQ/(Q-p)}(U) \subset L^{p}$ and  ii) above. Indeed
\begin{equation*}\begin{split}
\|u\ast K\|_{W^{s,p}(\he n)} &\le C\big\{ \|u\ast K\|_{L^p (\he n)} +\sum_{\ell =1}^m \|u\ast W_\ell K\|_{W^{s-1,p}(\he n)} \big\}
\\&
 \le C\big\{ \|u\ast K\|_{L^p (\he n)} + \|u\|_{W^{s-1,p}(\he n)} \big\}
\\&
\le 
C\big\{ \|u\|_{L^{pQ/(Q-p}(\he n)} + \|u\|_{W^{s-1,p}(\he n)} \big\}
\le
C_{U} \|u\|_{W^{s-1,p}}(\he n).
\end{split}\end{equation*}

\end{proof}

 We state now a few properties
 related to the convolution in $L^\infty$ that will be used in the sequel and
 - as far as we know - are not explicitly stated in the literature.

We stress that
we have to proceed carefully and we cannot use the corresponding results in \cite{BFP2}, \cite{BFP3}
because of the presence of the $L^\infty$-space. The
proofs follow verbatim those of analogous statements in the Euclidean setting,
keeping in mind that the group convolution is not commutative.

We remind first that, if $a\in L^\infty ({\he n}) \subset L^1_{\mathrm{loc}}$,
then the map $\phi\mapsto \int a(x) \phi(x)\, dx$ defines a distribution in $\mc D'(U)$
for all open sets $U\subset \he n$.

\begin{lemma}\label{orsay march}
Let $a\in \mc E' ({\he n}) $ (i.e. $a\in \mc D'({\he n})$ and $a$ has compact support, see \cite{treves} Theorem 24.2).
 If $\phi\in \mc D (\he n )$ and $K$ is a kernel in $L^1_{\mathrm{loc}}$, we notice first that $\phi\ast \ccheck K \in\mc E(\he n)$ so that  the map
$$
 \phi\mapsto \Scal{ a}{\phi\ast \ccheck K}_{\mc E',\mc E}
=: \Scal{ a\ast K}{\phi}_{\mc D',\mc D}
$$
belongs to $\mc D'$. 

Moreover, if $W$ is a horizontal derivative, then the convolution
$a\ast WK$ is well defined since $a$ is compactly supported. 
In addition,
\begin{equation}\label{orsay march eq:1}
W( a\ast K) = a \ast WK.
\end{equation}

\end{lemma}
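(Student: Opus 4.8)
The statement of Lemma~\ref{orsay march} has two parts: first the well-definedness of $a\ast K$ as a distribution via the pairing $\phi\mapsto\Scal{a}{\phi\ast\ccheck K}_{\mc E',\mc E}$, and second the identity \eqref{orsay march eq:1}, namely $W(a\ast K)=a\ast WK$. The well-definedness is essentially bookkeeping: since $\phi\in\mc D(\he n)$ and $\ccheck K\in L^1_{\mathrm{loc}}$, the convolution $\phi\ast\ccheck K$ is smooth (it is a locally finite integral of translates of $\ccheck K$ against the smooth compactly supported $\phi$; more precisely one invokes Proposition~\ref{treves}(1) with the roles adjusted, or differentiates under the integral sign since $W^I(\phi\ast\ccheck K)=(W^I\phi)\ast\ccheck K$ for every left-invariant $W^I$), so $\phi\ast\ccheck K\in\mc E(\he n)$ and the pairing against $a\in\mc E'(\he n)$ makes sense. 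Linearity is clear, and continuity follows because $\phi\mapsto\phi\ast\ccheck K$ is continuous from $\mc D(\he n)$ to $\mc E(\he n)$ on each seminorm (a fixed compact set $L$ and a fixed order $m$ give $\sup_L|W^I(\phi\ast\ccheck K)|\le \|K\|_{L^1(L^{-1}\cdot\supp\phi)}\sup|W^I\phi|$), hence the composite with $a$ is continuous on $\mc D$.

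For the identity \eqref{orsay march eq:1}, the cleanest route is to unwind both sides against an arbitrary test function $\psi\in\mc D(\he n)$ and use the definition of distributional derivative together with the convolution-by-parts formula already recorded in \eqref{convolution by parts} and \eqref{convolutions var}. Concretely, $\Scal{W(a\ast K)}{\psi}=-\Scal{a\ast K}{W\psi}=-\Scal{a}{(W\psi)\ast\ccheck K}$ by the definition of the pairing defining $a\ast K$. Now I would show $(W\psi)\ast\ccheck K=W(\psi\ast\ccheck K)$ — this holds because $W$ is left-invariant, so it passes inside the convolution integral acting on the left factor (differentiating under the integral is justified since $\psi$ is smooth and compactly supported and $\ccheck K\in L^1_{\mathrm{loc}}$). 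Hence the expression equals $-\Scal{a}{W(\psi\ast\ccheck K)}$, and since $a\in\mc E'$ we may integrate by parts in the $\mc E',\mc E$ pairing: $-\Scal{a}{W(\psi\ast\ccheck K)}=\Scal{Wa}{\psi\ast\ccheck K}$ — but this is not quite what I want, since I prefer not to move the derivative onto $a$. Instead I use $W(\psi\ast\ccheck K)=\psi\ast(W\ccheck K)$ and the relation $\ccheck{(WK)}=-\,W\ccheck K$ up to the precise sign coming from $(W^If)\ccheck{\phantom{x}}=(-1)^{|I|}\ccheck W^I\ccheck f$ used in \eqref{convolution by parts} with $|I|=1$; this gives $W(\psi\ast\ccheck K)=-\,\psi\ast\ccheck{(WK)}$, so the whole thing becomes $\Scal{a}{\psi\ast\ccheck{(WK)}}=\Scal{a\ast(WK)}{\psi}$, which is exactly $\Scal{a\ast WK}{\psi}$. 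Equating for all $\psi$ yields \eqref{orsay march eq:1}.

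**The main obstacle.** The only genuinely delicate point is justifying the interchange of the left-invariant vector field $W$ with the convolution integral $\psi\ast\ccheck K=\int\psi(q)\,\ccheck K(q^{-1}p)\,dq$ when $K$ is merely in $L^1_{\mathrm{loc}}$ (not smooth), and making sure the smoothness claim $\phi\ast\ccheck K\in\mc E(\he n)$ is on firm footing despite the absence of density arguments flagged in the text. The resolution is that in $\psi\ast\ccheck K$ the \emph{smooth} factor is on the \emph{left}, and left-invariant vector fields act on the left variable: writing the convolution as $\int\psi(pq^{-1})\,\ccheck K(q)\,dq$ (change of variables $q\mapsto pq^{-1}$, which is admissible on $\he n$ up to the modular function, trivial here since $\he n$ is unimodular), the $p$-dependence sits entirely inside $\psi(pq^{-1})$, which is $C^\infty$ with all derivatives dominated uniformly on compacta by seminorms of $\psi$, while $\ccheck K\in L^1_{\mathrm{loc}}$ and $q$ ranges over a compact set for $p$ in a compact set; dominated convergence then legitimizes differentiation under the integral. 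So $\phi\ast\ccheck K\in\mc E(\he n)$ and $W(\psi\ast\ccheck K)=(W\psi)\ast\ccheck K$ as well as $W(\psi\ast\ccheck K)=\psi\ast(W\ccheck K)$ both hold, the second because one may equivalently differentiate $\ccheck K(q)$ in the other grouping — and consistency of the two is precisely the content being exploited. Everything else is symbol-pushing with the pairings \eqref{convolutions var}–\eqref{convolution by parts}, the unimodularity of $\he n$, and the sign rule $\ccheck{(WK)}=-W\ccheck K$.
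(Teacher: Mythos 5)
Your overall strategy---pairing with a test function $\psi$, writing $\Scal{W(a\ast K)}{\psi}=-\Scal{a\ast K}{W\psi}=-\Scal{a}{(W\psi)\ast\ccheck K}$, and then transferring the derivative from $\psi$ onto $K$---is exactly the paper's, and your treatment of the first part (smoothness of $\phi\ast\ccheck K$ via the substitution $z=q^{-1}p$ and continuity of $\phi\mapsto\phi\ast\ccheck K$) is fine and more explicit than the paper's reference to Tr\`eves. The problem is how you justify the key step. Both identities you invoke, namely $(W\psi)\ast\ccheck K=W(\psi\ast\ccheck K)$ (``$W$ is left-invariant, so it passes onto the left factor'') and $\ccheck{(WK)}=-W\ccheck K$, are false on $\he n$: a left-invariant vector field commutes with group convolution by falling on the \emph{right} factor, $W(f\ast g)=f\ast Wg$, never on the left one; and $W\ccheck K=-\ccheck{(W^{R}K)}$, where $W^{R}$ is the \emph{right}-invariant field agreeing with $W$ at $e$---this is exactly what the notation $\ccheck W$ in \eqref{convolution by parts} encodes. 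Both slips amount to treating the convolution as commutative, which is the pitfall the paper explicitly warns against. The two errors happen to cancel (the true relations are $(W\psi)\ast\ccheck K=\psi\ast(W^{R}\ccheck K)$ and $W^{R}\ccheck K=-\ccheck{(WK)}$), so the intermediate identity $(W\psi)\ast\ccheck K=-\psi\ast\ccheck{(WK)}$ you land on is correct, but the argument as written is not a proof. The same false commutation reappears in your smoothness discussion (``$W^{I}(\phi\ast\ccheck K)=(W^{I}\phi)\ast\ccheck K$ for every left-invariant $W^{I}$''), though there your subsequent change-of-variables argument makes it harmless.

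The repair is short and is what the paper actually does: verify $(W\psi)\ast\ccheck K=-\psi\ast\ccheck{(WK)}$ directly. Since $\ccheck K(y^{-1}x)=K(x^{-1}y)$, one has $(W\psi)\ast\ccheck K(x)=\int W\psi(y)\,K(x^{-1}y)\,dy$; the map $y\mapsto K(x^{-1}y)$ is a left translate of $K$, so the left-invariant field $W_{y}$ applied to it gives $(WK)(x^{-1}y)$, and since the horizontal fields are divergence-free, integration by parts in $y$ (understood distributionally when $K$ is merely $L^{1}_{\mathrm{loc}}$, the smooth compactly supported $\psi$ being paired against a translate of $K$) yields $-\int\psi(y)(WK)(x^{-1}y)\,dy=-\psi\ast\ccheck{(WK)}(x)$. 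With this substitution your chain of pairings closes correctly. (Incidentally, the paper's own displayed computation drops the two compensating minus signs; your top-level sign bookkeeping is the more careful one.)
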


\begin{proof} The first statement follows from \cite{treves}, Definition 27.3 and Theorem 27.6.

As for the last statement, consider a test function $\phi \in \mc D(\he n)$. We claim that
\begin{equation*}\begin{split}
\Scal{W( a\ast K)}{\phi} &= \Scal{ a\ast K}{W\phi}= \Scal{ a\ast WK}{\phi}.
\end{split}\end{equation*}
Indeed $\Scal{ a\ast K}{W\phi}= \Scal{ a}{W\phi\ast \ccheck K}$. Since
\begin{equation*}\begin{split}
W\phi\ast \ccheck K (x) = \int W\phi (y) K(x^{-1}y)\, dy =  \int \phi (y) (WK)(x^{-1}y)\, dy = \phi\ast \ccheck (WK) (x),
\end{split}\end{equation*}
we can conclude since
$\Scal{ a}{W\phi\ast \ccheck K} = \Scal{ a}{\phi\ast \ccheck (WK)}=\Scal{a\ast WK}{\phi}$.

\end{proof}

\begin{proposition}\label{freccetta}  Let $U$ be a bounded open subset of $\he n$, and suppose  $a\in L^\infty(U)$ is compactly supported. If  $K$ is a kernel in $L^1_{\mathrm{loc}}$, 
then the convolution $ a\ast K$ defined in Lemma \ref{orsay march} belongs to $L^\infty(U)$, and
$$
\| a\ast K\|_{L^\infty(U)} \le C (K,U, \supp a) \| a\|_{L^\infty(U)}.
$$

\end{proposition}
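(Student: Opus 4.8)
The plan is to reduce the claim to the boundedness of a convolution operator with an $L^1$ kernel, using the fact that $a$ is a bounded, compactly supported function. First I would fix an open bounded set $V\Subset\he n$ such that $\supp a\subset V$, and choose $R>0$ so that the kernel $K$ is in $L^1$ on the ball $B(e,R)$; since $K\in L^1_{\mathrm{loc}}$, I can always write $K=K_0+K_1$ where $K_0=K\cdot\mathbf 1_{B(e,R)}\in L^1(\he n)$ and $K_1=K\cdot\mathbf 1_{\he n\setminus B(e,R)}$ is bounded on the region that matters. The point is that because $a$ is compactly supported, when we evaluate $a\ast K$ at a point $x\in U$ only the values of $K$ on $(\supp a)^{-1}\cdot x$ enter, and that set is contained in a fixed bounded set $\Omega=\Omega(U,\supp a)$. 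Hence $a\ast K(x)=a\ast(K\mathbf 1_\Omega)(x)$ for all $x\in U$, and $K\mathbf 1_\Omega\in L^1(\he n)$.

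Next I would make this localization rigorous at the level of distributions, matching the definition of $a\ast K$ given in Lemma \ref{orsay march}: for a test function $\phi\in\mc D(U)$, one has $\Scal{a\ast K}{\phi}=\Scal{a}{\phi\ast\ccheck K}_{\mc E',\mc E}$, and since $a$ is (represented by) an $L^\infty$ function supported in $V$, this equals $\int_V a(y)\,(\phi\ast\ccheck K)(y)\,dy=\int_V a(y)\int\phi(x)\,K(y^{-1}x)\,dx\,dy$. For $x\in U$ and $y\in V$ the argument $y^{-1}x$ ranges in a fixed bounded set, so replacing $K$ by $K\mathbf 1_\Omega$ changes nothing; by Fubini (justified since $a\in L^\infty$, $\phi\in\mc D$, $K\mathbf 1_\Omega\in L^1$) we get $\Scal{a\ast K}{\phi}=\int\phi(x)\big(\int a(y)(K\mathbf 1_\Omega)(y^{-1}x)\,dy\big)\,dx$. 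Thus $a\ast K$ is represented on $U$ by the ordinary function $x\mapsto\int a(y)(K\mathbf 1_\Omega)(y^{-1}x)\,dy$.

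Finally I would estimate this function pointwise: for a.e.\ $x\in U$,
\begin{equation*}
|a\ast K(x)|\le \int_V |a(y)|\,|K\mathbf 1_\Omega(y^{-1}x)|\,dy\le \|a\|_{L^\infty(U)}\int_{\he n}|K\mathbf 1_\Omega(z)|\,dz=\|a\|_{L^\infty(U)}\,\|K\|_{L^1(\Omega)},
\end{equation*}
where I used left-invariance of the Haar measure in the change of variable $z=y^{-1}x$ (for fixed $y$) after bounding $|a(y)|$ by its supremum. Setting $C(K,U,\supp a):=\|K\|_{L^1(\Omega)}$ with $\Omega=(\supp a)^{-1}\cdot U$ (a bounded set, hence of finite $K$-mass since $K\in L^1_{\mathrm{loc}}$) gives the stated inequality and in particular shows $a\ast K\in L^\infty(U)$.

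The only genuine subtlety — and the step I would be most careful about — is the passage from the distributional definition of $a\ast K$ to its representation by the explicit integral, i.e.\ checking that the localization $K\rightsquigarrow K\mathbf 1_\Omega$ and the application of Fubini are legitimate given that $a$ is merely in $L^\infty$ (not smooth) and $K$ merely in $L^1_{\mathrm{loc}}$; this is exactly the kind of point the authors flag in Section \ref{sobolev kernels} as needing care because $L^\infty$ precludes density arguments. Everything else is a routine Young-type estimate.
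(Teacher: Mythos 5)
Your proof is correct, and it rests on the same key observation as the paper's: since $a$ is compactly supported and the test functions live in the bounded set $U$, only the values of $K$ on a fixed bounded set ever enter (your $\Omega=(\supp a)^{-1}\cdot U$; in the paper a ball $B(e,R)$ with $R=\mathrm{diam}\,U+d(\supp a,e)$), so $K$ may be replaced by an $L^1$ kernel and a Young-type estimate applies. Where you genuinely diverge is in how this is converted into the $L^\infty$ bound. The paper never leaves the level of the pairing: it proves $\|\phi\ast\ccheck K\|_{L^1(\supp a)}\le C(K,R)\|\phi\|_{L^1(U)}$ for $\phi\in\mc D(U)$, deduces $|\Scal{a\ast K}{\phi}|\le C\|a\|_{L^\infty(U)}\|\phi\|_{L^1(U)}$, and concludes by duality $L^\infty=(L^1)^*$ — no Fubini and no explicit representation of $a\ast K$ is needed. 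You instead push through Fubini to identify $a\ast K$ on $U$ with the honest integral $x\mapsto\int a(y)(K\chi_\Omega)(y^{-1}x)\,dy$ and estimate it pointwise; this costs you the absolute-integrability check (which you correctly supply, since $K\chi_\Omega\in L^1$ and $a\in L^1\cap L^\infty$) but buys slightly more than the statement asks for, namely an explicit formula for $a\ast K$ on $U$ rather than bare membership in $L^\infty$. One small slip: in the substitution $z=y^{-1}x$ it is $x$ that is fixed and $y$ that varies, and what you actually invoke is invariance of Haar measure under inversion and right translation (equivalently, unimodularity of $\he n$: in exponential coordinates $y\mapsto y^{-1}\cdot x$ has Jacobian of modulus one), not left-invariance ``for fixed $y$''; this does not affect the argument.
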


\begin{proof} Take $\phi\in \mc D(U)$. We note first that $\phi\ast \ccheck K$ belongs to $L^1(\supp a)$
and 
\begin{equation}\label{L1L1}
\| \phi\ast \ccheck K\|_{L^1(\supp a)} \le C(K)\|\phi\|_{L^1(U)}.
\end{equation}
Indeed, if $x\in \supp a$,
$$
|(\phi\ast \ccheck K)(x) |\le  \int_{d(z,e)\le R}| \phi(xz) |\, |K(z)|\, dz,
$$
since 
$$
d(z,e)\le d(z^{-1}, x)+ d(x, e) \le \mathrm{diam}\,  (U) + d(\supp a, e)=: R.
$$
Thus
$$
\| \phi\ast \ccheck K\|_{L^1(\supp a)} \le \|  |\phi| \ast (|K| \chi_{B(e,R)} \|_{L^1(\he n)}\le
C(K,R)\|\phi\|_{L^1(U)} .
$$
Thus, by definition, 
$$
| \Scal{ a\ast K}{\phi}_{\mc D',\mc D}|\le C (K, U, \supp a)  \|\phi\|_{L^1(U)}  \| a\|_{L^\infty(U)},
$$
and the assertion follows by duality since $L^\infty = (L^1)^*$.
\end{proof}

\subsection{Multilinear algebra in $\he n$ and Rumin's complex}\label{rumin complex}
Unfortunately, when dealing 
with differential forms in $\he n$, 
the de Rham complex lacks scale invariance under anisotropic dilations (see \eqref{dilations}). 
Thus, a substitute for de Rham's complex, that recovers scale invariance under $\delta_t$ has been defined 
by M. Rumin, \cite{rumin_jdg}. In turn, this notion makes sense for arbitrary contact manifolds. 
We refer to  \cite{rumin_jdg} and \cite{BFTT}, \cite{BFP2} for details of the construction. In the present paper, we shall merely need the following list of formal properties
(for the sake of completeness, in an Appendix we describe in more detail the construction of Rumin's complex).

\medskip

Throughout this paper, $\bigwedge^h\mathfrak h$ denotes the $h$-th exterior power of the Lie algebra $\mathfrak h$.  Keeping in mind that the Lie algebra $\mathfrak h$ can be identified with the
tangent space to $\he n$ at $x=e$ (see, e.g. \cite{GHL}, Proposition 1.72), 
starting from $\cov h$ we can define by left translation  a fiber bundle
over $\he n$  that we can still denote by $\cov h \simeq \bigwedge^hT^*\he{n}$. 
Moreover, a scalar product in $\mathfrak h$ induces a scalar product and a norm on $\bigwedge^h\mathfrak h$.

We can think of $h$-forms as sections of 
$\cov h$ and we denote by $\Omega^h$ the
vector space of all smooth $h$-forms.

\begin{itemize}
\medskip\item For $h=0,\ldots,2n+1$, the space of Rumin $h$-forms, $E_0^{h}$, is the space of smooth 
sections of a left-invariant subbundle of $\bigwedge^h\mathfrak h$ (that we still denote by $E_0^{h}$). 
Hence it inherits the inner product and the norm of $\bigwedge^h\mathfrak h$.
\medskip
\item A differential operator $d_c:E_0^{h}\to E_0^{h+1}$ is defined. It is left-invariant, 
homogeneous with respect to group dilations. It is a first order homogeneous operator
in the horizontal derivatives in degree $\neq n$, whereas {\bf it is a second
order homogeneous horizontal operator in degree $n$}. 
\medskip\item Altogether, operators $d_c$ form a complex: $d_c\circ d_c=0$.
\medskip\item This complex is homotopic to de Rham's complex $(\Omega^\bullet,d)$. 
More precisely there exist a sub-complex $(E,d)$ of the de Rham complex and a suitable ``projection'' $\Pi_E:\Omega^\bullet \to E^\bullet$
such that
 $\Pi_E$ is a differential operator of order $\le 1$ in the horizontal  derivatives.
\item $\Pi_{E}$ is a chain map, i.e.
$$
d\Pi_{E} = \Pi_{E}d.
$$
\item Let $\;\Pi_{E_0}$ be the orthogonal projection 
on $E_0^\bullet$.  Then
 $$\Pi_{E_0}\Pi_{E}\Pi_{E_0}=\Pi_{E_0}\qquad\mbox{and}\qquad \Pi_{E}\Pi_{E_0}\Pi_{E}=\Pi_{E}.$$
(we stress that $\Pi_{E_0}$ is an algebraic operator).
\item The exterior differential $d_c$ can be written as
$$d_c =\Pi_{E_0}d \Pi_E \Pi_{E_0}.$$
\end{itemize}

Let us list a bunch of notations for vector-valued function spaces (for the scalar case,
we refer to Section \ref{sobolev kernels}).

\begin{definition} \label{dual spaces forms-no} If $U\subset \he n$ is an open set, $0\le h\le 2n+1$, $1\le p\le \infty$ and $m\ge 0$,
we denote by $L^{p}(U,\cov{h})$, $\mc E (U,\cov{h})$,  $\mc D (U,\cov{h})$, 
$W^{m,p}(U,\cov{h})$ (by $\WO{m}{p}{U,\cov{h}}$)
the space of all sections of $\cov{h}$ such that their
components with respect to a given left-invariant frame  belong to the corresponding scalar spaces.

The spaces $L^{p}(U,E_0^h)$, $\mc E (U,E_0^h)$,  $\mc D (U,\E_0^h)$,  
$W^{m,p}(U,E_0^{h})$ and $\WO{m}{p}{U,E_0^h}$ are defined in the same way. 

Finally, the spaces $W^{m,p}_{\mathrm{Euc}}(U,\cov{h})$,  $\stackrel{\circ}{W}\!\!^{m,p}_{\mathrm{Euc}} (U,\cov{h})$,
 $W^{m,p}_{\mathrm{Euc}}(U,E_0^{h})$ and $\stackrel{\circ}{W}\!\!^{m,p}_{\mathrm{Euc}}(U,E_0^h)$
are defined replacing Folland-Stein Sobolev spaces by usual Sobolev spaces.

Clearly, all these definitions
are independent of the choice of frame.
\end{definition}

When $d_c$ is second order (when acting on forms of degree $n$), $(E_0^\bullet,d_c)$ stops behaving like a differential module. This is the source of many complications. In particular, the classical Leibniz formula for the de Rham complex $d(\alpha\wedge\beta)=d\alpha\wedge\beta\pm \alpha\wedge d\beta$ in general fails to hold (see \cite{BFT2}-Proposition A.7). This causes several technical difficulties when we want to localize our estimates by means of cut-off functions.

\begin{lemma} [see also \cite{BFP3}, Lemma 4.1] \label{leibniz} If $\zeta$ is a smooth real function, then the following formulae hold in the sense of distributions:
\begin{itemize}
\item[i)] if $h\neq n$, then on $E_0^h$ we have
$$
[d_c,\zeta] = P_0^h(W\zeta),
$$
where $P_0^h(W\zeta): E_0^h \to E_0^{h+1}$ is a linear homogeneous differential operator of order zero with coefficients depending
only on the horizontal derivatives of $\zeta$. If $h\neq n+1$, an analogous statement holds if we replace
$d_c$ { in degree $h$ with $d^*_c$ in degree $h+1$};
\item[ii)] if $h= n$, then on $E_0^n$ we have
$$
[d_c,\zeta] = P_1^n(W\zeta) + P_0^n(W^2\zeta) ,
$$
where $P_1^n(W\zeta):E_0^n \to E_0^{n+1}$ is a linear homogeneous differential operator of order 1 (and therefore
horizontal) with coefficients depending
only on the horizontal derivatives of $\zeta$, and where $P_0^h(W^2\zeta): E_0^n \to E_0^{n+1}$ is a linear homogeneous differential operator in
the horizontal derivatives of order 0
 with coefficients depending
only on second order horizontal derivatives of $\zeta$. If $h = n+1$, an analogous statement holds if we replace
$d_c$ { in degree $n$ with $d^*_c$ in degree $n+1$}.

\end{itemize}

\end{lemma}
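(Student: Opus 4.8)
\textbf{Proof plan for Lemma \ref{leibniz}.}

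The plan is to compute the commutator $[d_c,\zeta]$ directly from the formula $d_c=\Pi_{E_0}\,d\,\Pi_E\,\Pi_{E_0}$, tracking at each stage which factors are algebraic (so they commute with multiplication by $\zeta$) and which are genuine differential operators that produce derivatives of $\zeta$. First I would recall from the structural list above that $\Pi_{E_0}$ is an algebraic (order-zero) operator, so it commutes with $\zeta$; hence $[d_c,\zeta]=\Pi_{E_0}\,[d\,\Pi_E,\zeta]\,\Pi_{E_0}$ on $E_0^\bullet$. Then $[d\,\Pi_E,\zeta]=[d,\zeta]\,\Pi_E+d\,[\Pi_E,\zeta]$. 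The first term is easy: for ordinary forms $[d,\zeta]=d\zeta\wedge\,\cdot\,$, an order-zero (algebraic) operator. The delicate term is $d\,[\Pi_E,\zeta]$, which requires knowing the order of $[\Pi_E,\zeta]$.

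The key input is that $\Pi_E$ is a differential operator of order $\le 1$ in the horizontal derivatives in general, but — and this is the crucial degree-dependent point — in degree $n$ its order jumps (this is exactly what makes $d_c$ second order on $E_0^n$; see the Appendix / \cite{rumin_jdg}, \cite{BFT2}). So the case split is governed entirely by the order of $\Pi_E$ in the relevant degree. When $h\neq n$, $\Pi_E$ restricted to the degrees involved is order $\le 1$ horizontal, so $[\Pi_E,\zeta]$ is order $0$ with coefficients in the horizontal derivatives $W\zeta$; applying $d$ (which in degree $\neq n$ outputs, after $\Pi_{E_0}$, a first-order horizontal operator) and then $\Pi_{E_0}$ keeps everything order $0$ in the end because the outermost $\Pi_{E_0}$ is algebraic and the only $\zeta$-dependence is already algebraic — one gets $[d_c,\zeta]=P_0^h(W\zeta)$, a zero-order operator whose coefficients depend only on $W\zeta$. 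For the adjoint statement when $h\neq n+1$, the same argument applies to $d_c^*=\Pi_{E_0}\,\Pi_E\,d^*\,\Pi_{E_0}$ (or one simply takes formal adjoints of the commutator identity, noting $[d_c^*,\zeta]=-[d_c,\zeta]^*$ up to signs), using that $\Pi_E$ and $\Pi_{E_0}$ in the degrees adjacent to $h+1$ are again order $\le 1$.

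When $h=n$, the contribution of $[\Pi_E,\zeta]$ now carries derivatives up to second order in $\zeta$: writing $\Pi_E$ in degree $n$ (resp.\ near $n+1$) as a sum of a purely algebraic piece, a first-order horizontal piece, and a genuinely second-order horizontal piece, the commutator $[\Pi_E,\zeta]$ is first order with coefficients in $W\zeta$ plus zero order with coefficients in $W^2\zeta$; composing with $d$ and $\Pi_{E_0}$ (both algebraic or first-order horizontal) and collecting terms by the order in the horizontal derivatives of $\zeta$ yields exactly $[d_c,\zeta]=P_1^n(W\zeta)+P_0^n(W^2\zeta)$, where $P_1^n$ is first order with $W\zeta$-coefficients and $P_0^n$ is order $0$ with $W^2\zeta$-coefficients. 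The adjoint case $h=n+1$ is handled as above by passing to $d_c^*$. I would make all of this rigorous in the sense of distributions by testing against $\phi\in\mc D(U,E_0^\bullet)$ and moving the (left-invariant, horizontal) derivatives onto $\phi$, exactly as in the proof of Lemma \ref{orsay march}; no density arguments are needed since $\zeta$ is smooth.

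The main obstacle is bookkeeping: extracting from the construction of $\Pi_E$ the precise statement ``$\Pi_E$ is order $\le 1$ horizontal away from degree $n$, and order $\le 2$ horizontal in degree $n$,'' and then carefully sorting, in the degree-$n$ case, which terms land in $P_1^n(W\zeta)$ versus $P_0^n(W^2\zeta)$ — in particular checking that no third-order-in-$\zeta$ term survives and that the first-order piece $P_1^n$ really only sees $W\zeta$ and not $W^2\zeta$. This is where the detailed formulas for Rumin's complex from the Appendix (and \cite{BFT2}-Proposition A.7, \cite{BFP3}-Lemma 4.1) do the work; the commutator algebra itself is routine once the orders of $\Pi_E$, $\Pi_{E_0}$, $d$ and $d^*$ in each degree are pinned down.
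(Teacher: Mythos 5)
Your overall strategy --- expanding $[d_c,\zeta]$ from $d_c=\Pi_{E_0}\,d\,\Pi_E\,\Pi_{E_0}$, using that $\Pi_{E_0}$ is algebraic, and commuting $\zeta$ through the remaining factors --- is the right one (the paper itself gives no proof and defers to \cite{BFP3}, Lemma 4.1, which proceeds in this spirit). However, the mechanism you propose for the case split contains a genuine error. The premise that ``in degree $n$ the order of $\Pi_E$ jumps'' contradicts the paper's own description: by Theorem \ref{main rumin new} i), $\Pi_E=\mathrm{Id}-d_0^{-1}d_1$ on degrees $1\le h\le n$ and $\Pi_E=\mathrm{Id}$ on degrees $>n$, so $\Pi_E$ has order $\le 1$ in \emph{every} degree, including $n$; the second-order nature of $d_c$ on $E_0^n$ comes from the composition $d_1\circ d_0^{-1}d_1$, not from $\Pi_E$ itself. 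Consequently a bookkeeping ``governed entirely by the order of $\Pi_E$'' cannot distinguish $h<n$ from $h=n$. Worse, in the case $h\ne n$ your argument produces too many terms: if $[\Pi_E,\zeta]$ is order $0$ with $W\zeta$ coefficients, then $d\,[\Pi_E,\zeta]$ is a first-order operator plus an order-$0$ operator involving second derivatives of $\zeta$, and the assertion that ``$\Pi_{E_0}$ keeps everything order $0$ because it is algebraic'' does not follow --- an algebraic projection composed with a first-order operator is still first order in general. Taken at face value, your computation would yield $[d_c,\zeta]=P_1(W\zeta)+P_0(W^2\zeta)$ in all degrees, which is strictly weaker than part i).

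The missing idea is the weight filtration. One must split $d=d_0+d_1+d_2$ by weight and use that $\Pi_{E_0}$ is the \emph{orthogonal} projection onto $E_0^{h+1}$, whose elements have pure weight $h+1$ when $h+1\le n$ and $h+2$ when $h+1>n$ (Theorem \ref{rumin in H}), together with the fact that covectors of distinct weights are orthogonal. For $h\ne n$ the offending contributions --- $d_1[\Pi_E,\zeta]$, $d_2[\Pi_E,\zeta]$, and the $(T\zeta)\,\theta\wedge\cdot$ part of $[d,\zeta]$ --- all carry weight at least $h+2$ and are therefore annihilated by $\Pi_{E_0}$ (or vanish outright in degrees $>n$, where Rumin forms are already multiples of $\theta$); only the weight-preserving algebraic pieces survive, giving $P_0^h(W\zeta)$. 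It is exactly when $h=n$ that the target $E_0^{n+1}$ has weight $n+2$, so the weight-$(n+2)$ terms $[d_1d_0^{-1}d_1,\zeta]$ and $[d_2,\zeta]=(T\zeta)(\cdots)$ survive and produce $P_1^n(W\zeta)+P_0^n(W^2\zeta)$; note $T=[X_i,Y_i]$, so $T\zeta$ is indeed a second-order horizontal derivative of $\zeta$, consistent with the statement. Once this weight argument replaces your ``order of $\Pi_E$'' heuristic, the rest of your plan (including the passage to $d_c^*$ by formal adjoints) goes through.
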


\begin{remark}\label{coleibniz} 
On forms of degree $h>n$, Lemma \ref{leibniz} i) takes the following simpler form. If $\alpha\in L^1_{\mathrm{loc}} (\he n, E_0^h)$ with $h>n$ and $\psi\in\mc E(\he n)$, then
\begin{equation*}\begin{split}
d_c(\psi\alpha) = d(\psi\alpha) = d\psi\wedge\alpha + \psi d\alpha =
d_c\psi\wedge\alpha + \psi d_c\alpha,
\end{split}\end{equation*}
This follows from Theorem \ref{main rumin new}, viii), since $\alpha$ is a multiple of $\theta$. 
\end{remark}

Leibniz formula has the following quantitative form.
 
\begin{remark}\label{commutators_remark}
Denote by $B=B(e,1)$ the unit ball in $\mathbb H^n$. If $\lambda>1$,  let $\zeta$ be a smooth function on  $\mathbb{H}^n$ that is supported outside of a neighborhood
of $B$,
such that $W_i \zeta$ is compactly supported in $B_\lambda=B(e,\lambda)$ for $i=1,\dots,2n$. 
\begin{itemize}
\item[i)] If $h\neq n$, let $\sigma \in L^\infty\left( B_\lambda, E_0^h\right)\cap d^{-1}L^\infty\left( B_\lambda, E_0^h\right)$  
 , then 
\begin{equation}\label{maggio 8 3}
\|d_c(\zeta\sigma)\|_{L^\infty(B_\lambda, E_0^h)}
 \leq C_\zeta \Big(\|\sigma\|_{L^\infty(B_\lambda, E_0^h)}+\|d_c\sigma\|_{L^\infty(B_\lambda, E_0^{h+1})}\Big).
\end{equation}
\item[ii)]  If $h= n$ let $\sigma \in W^{1,\infty}\left( B_\lambda, E_0^n\right)\cap d^{-1}L^\infty\left( B_\lambda, E_0^n\right)$  
 and, then
\begin{equation}\label{maggio 11 1}
\|d_c(\zeta\sigma)\|_{L^\infty (B_\lambda, E_0^{n+1})}
 \leq C_\zeta \Big(\|\sigma\|_{W^{1,\infty}(B_\lambda, E_0^n)}+
 \|d_c\sigma\|_{L^\infty(S_{\zeta}, E_0^{n+1})}\Big),
\end{equation}
where $S_{\zeta}$ is a neighborhood in $B_\lambda$ of $ \supp\zeta\cap B_\lambda$ (contained in  $B_\lambda\setminus \overline{B})$.
\end{itemize}
\end{remark}

The following generalizes Remark 2.16 of \cite{BFTT}. The proof uses a notation from the Appendix, Theorem \ref{main rumin new}.

\begin{lemma}\label{byparts}
Let $U\subset \he n$ be an open set. 

\begin{itemize}
  \item[i)] Let $\psi$ be an $h$-form in $L^1_{\mathrm{loc}} (U, E_0^h)$ and $\alpha\in\mc D(U,E_0^{2n-h})$. Then
$$
\int_{U}(d_c\phi)\wedge\alpha=(-1)^{h+1} \int_{U}\phi\wedge d_c\alpha,
$$
where the left-hand side is understood in distribution sense.
\end{itemize}
Assume further that $U$ is contractible. Let $\omega$ and $\psi$ be $d_c$-closed 
Rumin forms on $U$ of complementary degrees $h$ and $2n+1-h$, with $1\leq h\leq 2n$. Then
$$
\int_{U}\omega\wedge\psi=0
$$
in the following cases:
\begin{itemize}
\item[ii)] $\omega\in L^1_{\mathrm{loc}} (U, E_0^h)$ and $\psi\in\mc D(U,E_0^{2n+1-h})$.
\item[iii)] $1<p<\infty$, $\frac1p+\frac1{p'}=1$, $\omega\in L^p_{\mathrm{loc}}(U, E_0^h)$  and $\psi\in L^{p'} (U, E_0^{2n+1-h})$ is
compactly supported in $U$.
\end{itemize}
\end{lemma}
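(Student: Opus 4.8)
The plan is to prove (i) first by a duality/regularization argument, then deduce (ii) as an immediate corollary, and finally obtain (iii) by an approximation argument exploiting the $L^p$–$L^{p'}$ pairing. For part (i), the point is that the identity $\int_U (d_c\phi)\wedge\alpha = (-1)^{h+1}\int_U \phi\wedge d_c\alpha$ should be read as the \emph{definition} of the distributional action of $d_c$ on $\phi\in L^1_{\mathrm{loc}}$, but one must check consistency: when $\phi$ is smooth this must agree with the pointwise $d_c\phi$. The subtlety is that $d_c$ in degree $n$ is second order and $(E_0^\bullet,d_c)$ is not a differential module, so the naive Leibniz computation $d(\phi\wedge\alpha)=d_c\phi\wedge\alpha\pm\phi\wedge d_c\alpha$ is \emph{false}. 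First I would reduce to the smooth case by convolving $\phi$ with a standard mollifier $\rho_\varepsilon\in\mc D(\he n)$: since $d_c$ is left-invariant, $d_c(\phi\ast\rho_\varepsilon)=(d_c\phi)\ast\rho_\varepsilon$, and $\phi\ast\rho_\varepsilon\to\phi$ in $L^1_{\mathrm{loc}}$, so it suffices to establish the formula for $\phi\in\mc E(U,E_0^h)$. For smooth $\phi$, I would use the formula $d_c=\Pi_{E_0}d\Pi_E\Pi_{E_0}$ together with $\Pi_E\Pi_{E_0}\Pi_E=\Pi_E$ and the chain-map property $d\Pi_E=\Pi_E d$, pulling the computation back to the de Rham complex where the genuine Leibniz rule $\int_U d(\Pi_E\phi\wedge\Pi_E\alpha)=0$ (Stokes, $\alpha$ compactly supported) does apply. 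The bookkeeping needed to move the projections $\Pi_E,\Pi_{E_0}$ across the wedge product — using the orthogonality relations listed in Section \ref{rumin complex} and the description in Theorem \ref{main rumin new} — is where the real work lies, and this is the step I expect to be the main obstacle, especially reconciling the degree-$n$ second-order case.

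For part (ii): since $U$ is contractible and $\omega$ is $d_c$-closed, the interior Poincaré inequalities of Theorem \ref{P introduction} (or rather the underlying homotopy formula for Rumin's complex) give a primitive $\eta$ with $d_c\eta=\omega$ locally; more simply, because $\psi\in\mc D(U,E_0^{2n+1-h})$ is compactly supported in the contractible set $U$, a neighborhood of $\supp\psi$ is contained in a ball on which $\omega=d_c\eta$ for some $\eta\in L^1_{\mathrm{loc}}$. Then part (i) applied with $\phi=\eta$ and $\alpha=\psi$ gives $\int_U\omega\wedge\psi=\int_U (d_c\eta)\wedge\psi=(-1)^{?}\int_U\eta\wedge d_c\psi=0$, using $d_c\psi=0$. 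One must be a little careful about the degrees: $\eta$ has degree $h-1$ and $\psi$ degree $2n+1-h$, so $\eta\wedge d_c\psi$ pairs a $(h-1)$-form with a $(2n+2-h)$-form, which is the correct total degree $2n+1$, and the sign is supplied by (i).

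For part (iii): here $\omega\in L^p_{\mathrm{loc}}$, $\psi\in L^{p'}$ compactly supported, $1<p<\infty$. I would fix a cutoff $\zeta\in\mc D(U)$ equal to $1$ on a neighborhood of $\supp\psi$, so $\zeta\omega\in L^p(U,E_0^h)$ has compact support, and mollify: $\omega_\varepsilon:=(\zeta\omega)\ast\rho_\varepsilon$ and $\psi_\varepsilon:=\psi\ast\rho_\varepsilon$, both smooth and compactly supported in $U$ for $\varepsilon$ small. By Hausdorff–Young (Theorem \ref{folland cont} i)) $\omega_\varepsilon\to\zeta\omega$ in $L^p$ and $\psi_\varepsilon\to\psi$ in $L^{p'}$, so $\int_U\omega_\varepsilon\wedge\psi_\varepsilon\to\int_U\omega\wedge\psi$. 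However $\omega_\varepsilon$ and $\psi_\varepsilon$ need not be $d_c$-closed (mollification commutes with $d_c$, so they \emph{are} closed: $d_c\omega_\varepsilon=(\zeta\,d_c\omega)\ast\rho_\varepsilon + [\text{commutator terms}]\ast\rho_\varepsilon$). The commutator terms $[d_c,\zeta]\omega$ are controlled by Lemma \ref{leibniz}, and they are supported in $\supp W\zeta$, which is disjoint from $\supp\psi_\varepsilon$ for $\varepsilon$ small; hence on a neighborhood of $\supp\psi_\varepsilon$ we have $d_c\omega_\varepsilon=0$, and part (ii) applies to give $\int_U\omega_\varepsilon\wedge\psi_\varepsilon=0$ for all small $\varepsilon$. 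Passing to the limit yields $\int_U\omega\wedge\psi=0$. The degree-$n$ case again requires extra care because $[d_c,\zeta]$ then contains a first-order term $P_1^n(W\zeta)$, so one needs $\omega$ slightly better than $L^p$ near $\supp W\zeta$ — but this is harmless since the commutator support is disjoint from where $\psi$ lives, and one may first replace $\omega$ by its mollification there too.
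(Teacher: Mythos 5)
The central gap is in part (i) for degree $n$, and you flag it yourself: you reduce to smooth $\phi$ and propose to transfer the computation to de Rham via $\Pi_E$, but you stop exactly where the content of the lemma lies, calling the movement of the projections across the wedge ``the main obstacle.'' The resolution is not bookkeeping but a weight count. For $\phi,\alpha$ of degree $n$ one writes $d(\Pi_E\phi\wedge\Pi_E\alpha)=(d\Pi_E\phi)\wedge\Pi_E\alpha+(-1)^n\Pi_E\phi\wedge d\Pi_E\alpha$ and observes that the two error terms $(\Pi_E\phi-\phi)\wedge d_c\alpha$ and $(\Pi_E\alpha-\alpha)\wedge d_c\phi$ vanish \emph{identically}: by Theorem \ref{main rumin new} ix) the operator $\Pi_E-\mathrm{Id}$ raises weight to $n+1$ on $E_0^n$, while $d_c\alpha\in E_0^{n+1}$ has weight $n+2$, so each product is a $(2n+1)$-form of weight $2n+3$, whereas the only weight occurring in degree $2n+1$ is $2n+2$. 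Hence $d(\Pi_E\phi\wedge\Pi_E\alpha)=(d_c\phi)\wedge\alpha+(-1)^n\phi\wedge d_c\alpha$ exactly, and Stokes (the product is compactly supported) finishes; for $h\neq n$ the genuine Leibniz rule for $d_c$ holds and is quoted from the literature. Without this step your part (i) is a plan, not a proof. (A minor additional point: left-invariant operators commute with convolution by a mollifier placed on the \emph{left} of the group convolution, not the right, so the regularization should be $\rho_\eps\ast\phi$.)

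Parts (ii) and (iii) as written also do not go through. In (ii), a compact subset of a contractible open set $U$ need not have a neighborhood contained in a ball that is itself contained in $U$, and ``local'' primitives cannot be glued for $d_c$ because cutoffs do not commute with it; moreover the existence of an $L^1_{\mathrm{loc}}$ primitive of an $L^1_{\mathrm{loc}}$ closed form is nowhere available at this generality, so you would be assuming something at least as strong as the conclusion. The paper instead uses contractibility of all of $U$ only in the smooth case (Rumin's complex is homotopy equivalent to de Rham's, so a smooth closed form on a contractible set has a smooth global primitive, and (i) applies), and then extends to $L^1_{\mathrm{loc}}$ by approximating $\omega$ by smooth closed forms via convolution. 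In (iii), multiplying by $\zeta$ destroys closedness globally, and the open set on which $d_c\omega_\eps=0$ (the complement of a neighborhood of $\supp W\zeta$) need not be contractible, so part (ii) does not apply to the pair $(\omega_\eps,\psi_\eps)$ as you claim. The cutoff is unnecessary: one approximates $\omega$ in $L^p_{\mathrm{loc}}$ and $\psi$ in compactly supported $L^{p'}$ by smooth closed forms by convolution alone and invokes the smooth case; the $L^p$--$L^{p'}$ pairing then passes to the limit.
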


\begin{proof}
Assume first that $\phi$ is smooth. Since $\phi\wedge\alpha$ has degree $2n\geq n+1$, $d(\phi\wedge\alpha)=d_c(\phi\wedge\alpha)$. If $h\not=n$, the formula 
$$
d_c(\phi\wedge\alpha)=(d_c\phi)\wedge\alpha+(-1)^h \phi\wedge d_c\alpha
$$
is established in \cite{PT}, Prop. 4.2. Let us assume that both $\phi$ and $\alpha$ have degree $n$. Then, by definition, $d_c\phi=d\Pi_E\phi$ where $\Pi_E\phi-\phi$ has weight $n+1$ (see Theorem \ref{main rumin new} ix)). Since $d_c\alpha$ has weight $n+2$, $(\Pi_E\phi-\phi)\wedge d_c\alpha=0$. Symmetrically, $(\Pi_E\alpha-\alpha)\wedge d_c\phi=0$. It follows that
$$
d(\Pi_E \phi\wedge\Pi_E \alpha)=(d_c\phi)\wedge\alpha+(-1)^n \phi\wedge d_c\alpha.
$$
In all cases, we have come up with a compactly supported primitive of $(d_c\phi)\wedge\alpha+(-1)^h \phi\wedge d_c\alpha$, hence
$$
\int_{U}((d_c\phi)\wedge\alpha+(-1)^h \phi\wedge d_c\alpha)=0.
$$
Formula i) extends to Rumin forms $\phi$ with distributional coefficients, and in particular to forms in $L^1_{\mathrm{loc}} (U, E_0^h)$.

\medskip

Assume first that $\omega$ and $\psi$ are smooth. 
Since Rumin's complex is homotopic to de Rham's, $\omega$ admits a smooth primitive $\phi$ on $U$, $d_c\phi=\omega$. Then i) implies that
$$
\int_{U}\omega\wedge\psi=\int_{U}(d_c\phi)\wedge\psi=\pm\int_{U}\phi\wedge d_c\psi=0.
$$
\begin{itemize}
\item[ii)] By right convolution (which commutes with the left-invariant operator $d_c$), closed forms are dense in $L^1_{\mathrm{loc}}$ $d_c$-closed forms, so the identity extends to the case where $\omega\in L^1_{\mathrm{loc}} (U, E_0^h)$.
\item[iii)] Again by right convolution, smooth $d_c$-closed forms are dense in $L^{p}_{\mathrm{loc}} (U, E_0^{h})$ $d_c$-closed forms and smooth compactly supported $d_c$-closed forms are dense in compactly supported $L^{p'} (U, E_0^{2n+1-h})$ $d_c$-closed forms. 
\end{itemize}
\end{proof}

\subsection{Rumin's Laplacian}\label{sect rumin laplacian}

 \begin{definition}\label{rumin laplacian} 
In $\he n$, following \cite{rumin_jdg}, we define
the operators $\Delta_{\he{},h}$  on $E_0^h$ by setting
\begin{equation*}
\Delta_{\he{},h}=
\left\{
  \begin{array}{lcl}
     d_cd^*_c+d^*_c d_c\quad &\mbox{if } & h\neq n, n+1;
     \\ (d_cd^*_c)^2 +d^*_cd_c\quad& \mbox{if } & h=n;
     \\d_cd^*_c+(d^*_c d_c)^2 \quad &\mbox{if }  & h=n+1.
  \end{array}
\right.
\end{equation*}

\end{definition}

Notice that $-\Delta_{\he{},0} = \sum_{j=1}^{2n}(W_j^2)$ is the usual sub-Laplacian of
$\he n$. 

 For sake of simplicity, once a basis  of $E_0^h$
is fixed, the operator $\Delta_{\he{},h}$ can be identified with a matrix-valued map, still denoted
by $\Delta_{\he{},h}$,
\begin{equation}\label{matrix form}
\Delta_{\he{},h} = (\Delta_{\he{},h}^{ij})_{i,j=1,\dots,N_h}: \mc D'(\he{n}, \rn{N_h})\to \mc D'(\he{n}, \rn{N_h}),
\end{equation}
where $\mc D'(\he{n}, \rn{N_h})$ is the space of vector-valued distributions on $\he n$, and $N_h$ is
the dimension of $E_0^h$ (see \cite{BBF}).

This identification makes possible to avoid the notion of currents: we refer to \cite{BFTT} for
a more elegant presentation.

Combining \cite{rumin_jdg}, Section 3,   and \cite{BFT3}, Theorems 3.1 and 4.1, we obtain the following result.

\begin{theorem}[see \cite{BFT3}, Theorem 3.1] \label{global solution}
If $0\le h\le 2n+1$, then the differential operator $\Delta_{\he{},h}$ is
homogeneous of degree $\mu$ with respect to group dilations, where $\mu=2$ if $h\neq n, n+1$ and  $\mu=4$ 
if $h=n, n+1$. It follows that
\begin{enumerate}
\item[i)] for $j=1,\dots,N_h$ there exists
\begin{equation}\label{numero}
    K_j =
\big(K_{1j},\dots, K_{N_h j}\big), \quad j=1,\dots N_h
\end{equation}
 with $K_{ij}\in\mc D'(\he{n})\cap \mc
E(\he{n} \setminus\{0\})$,
$i,j =1,\dots,N_h$;
\item[ii)] if $\mu<Q$, then the $K_{ij}$'s are
kernels of type $\mu$
 for
$i,j
=1,\dots, N_h$

 If $\mu=Q$,
then the $K_{ij}$'s satisfy the logarithmic estimate
$|K_{ij}(p)|\le C(1+|\ln\rho(p)|)$ and hence
belong to $L^1_{\mathrm{loc}}(\he{n})$.
Moreover, their horizontal derivatives  $W_\ell K_{ij}$,
$\ell=1,\dots,2n$, are
kernels of type $Q-1$;
\item[iii)] when $\alpha\in
\mc D(\he{n},\rn {N_h})$,
if we set
\begin{equation}\label{numero2}
   \Delta_{\he{},h}^{-1}\alpha:=
\big(    
    \sum_{j}\alpha_j\ast  K_{1j},\dots,
     \sum_{j}\alpha_j\ast  K_{N_hj}\big),
\end{equation}
 then $ \Delta_{h}\Delta_{\he{},h}^{-1}\alpha =  \alpha. $
Moreover, if $\mu<Q$, also $\Delta_{\he{},h}^{-1}\Delta_{h} \alpha =\alpha$.

\item[iv)] if $\mu=Q$, then for any $\alpha\in
\mc D(\he{n},\rn {N_h})$ there exists 
$\beta_\alpha:=(\beta_1,\dots,\beta_{N_h})\in \rn{N_h}$,  such that
$$\Delta_{\he{},h}^{-1}\Delta_{h}\alpha - \alpha = \beta_\alpha.$$

\item[iv)] $\Delta_{\he{},h}^{-1}: \mc D(\he{n},\rn {N_h}) \to \mc E(\he{n},\rn {N_h})$
and  $\Delta_{\he{},h}^{-1}: \mc E'(\he{n},\rn {N_h}) \to \mc D'(\he{n},\rn {N_h})$.

\end{enumerate}
\end{theorem}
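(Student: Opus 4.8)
The plan is to build the statement out of four ingredients: the homogeneity of $\Delta_{\he{},h}$, Rumin's theorem that $\Delta_{\he{},h}$ is (maximally) hypoelliptic, the Folland--Stein theory of fundamental solutions of homogeneous hypoelliptic left-invariant operators on stratified groups, and the standard logarithmic correction needed when the degree of homogeneity equals $Q$. First I would record the homogeneity: since $d_c$ is homogeneous of order $1$ in the horizontal derivatives in degree $\neq n$ and of order $2$ in degree $n$ — so that $d^*_c$ acting on $E_0^h$ has order $2$ when $h=n+1$ and order $1$ otherwise — inspection of Definition \ref{rumin laplacian} shows that for $h\neq n,n+1$ each of $d_cd^*_c$ and $d^*_cd_c$ is a composition of two first-order operators, giving $\mu=2$; for $h=n$, $d^*_c$ leaving $E_0^n$ has order $1$ whereas $d_c\colon E_0^n\to E_0^{n+1}$ has order $2$, so both $(d_cd^*_c)^2$ and $d^*_cd_c$ have order $4$, giving $\mu=4$; the case $h=n+1$ is symmetric. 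Since $\mc L^{2n+1}(B(e,r))\sim r^Q$, the Dirac mass at $e$ is $\delta_\lambda$-homogeneous of degree $-Q$, which forces the homogeneity degree $\mu-Q$ of the kernels below.

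To obtain (i), I would invoke \cite{rumin_jdg}, Section~3: $\Delta_{\he{},h}$ satisfies Rockland's condition — $\pi(\Delta_{\he{},h})$ is injective on $C^\infty$ vectors for every nontrivial irreducible unitary representation $\pi$ of $\he n$ — and is therefore maximally hypoelliptic. The general theory of such operators (see \cite{folland} for the scalar case and \cite{BFT3} in general) then produces, column by column, distributions $K_j$ as in \eqref{numero} with $K_{ij}\in\mc D'(\he n)\cap\mc E(\he n\setminus\{0\})$ and $\Delta_{\he{},h}K_j=e_j\,\delta$ (the Dirac mass times the $j$-th basis vector of $\rn{N_h}$). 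For (ii): if $\mu<Q$, uniqueness of the fundamental solution forces $K_{ij}$ to be homogeneous of degree $\mu-Q\in(-Q,0)$, and being smooth off the origin it is a kernel of type $\mu$. If $\mu=Q$ — which by $Q=2n+2$ occurs only on $\he 1$ in degrees $1$ and $2$ — there is no homogeneous fundamental solution; subtracting from a homogeneous parametrix a suitable multiple of $\rho^{\mu-Q}\log\rho$ yields $K_{ij}$ with $|K_{ij}(p)|\le C(1+|\ln\rho(p)|)$, hence in $L^1_{\mathrm{loc}}(\he n)$, while differentiation removes the logarithm, so each $W_\ell K_{ij}$ is homogeneous of degree $-1$ and smooth off $0$, i.e.\ a kernel of type $Q-1$.

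For the remaining items I would reason as follows. Given $\alpha\in\mc D(\he n,\rn{N_h})$, the object $\Delta_{\he{},h}^{-1}\alpha$ of \eqref{numero2} is a matrix of convolutions of test functions with the $K_{ij}$, hence smooth by Proposition \ref{treves}~(1); applying the matrix operator $\Delta_{\he{},h}$ entrywise, using that its entries are left-invariant and that $L(\phi\ast T)=\phi\ast LT$, and then the fundamental-solution relation $\sum_i\Delta_{\he{},h}^{ki}K_{ij}=\delta_{kj}\,\delta$, gives $\Delta_h\Delta_{\he{},h}^{-1}\alpha=\alpha$. Setting $v:=\Delta_{\he{},h}^{-1}\Delta_h\alpha-\alpha$ and applying $\Delta_h$ — using the identity just proved to $\Delta_h\alpha\in\mc D$ — yields $\Delta_h v=0$, so $v$ is smooth (hypoellipticity) and $\Delta_{\he{},h}$-harmonic. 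When $\mu<Q$, $v$ equals, up to a compactly supported term, a convolution of a compactly supported function with a kernel of type $\mu$, so $v(p)\to0$ at infinity and a Liouville-type argument for $\Delta_{\he{},h}$-harmonic forms forces $v\equiv0$, which is the last clause of (iii). When $\mu=Q$, the type-$(Q-1)$ bound on $W_\ell K_{ij}$ shows that $v$ has bounded horizontal gradient and at most logarithmic growth, so being $\Delta_{\he{},h}$-harmonic it reduces to an $\rn{N_h}$-valued constant $\beta_\alpha$, giving (iv). Finally $\Delta_{\he{},h}^{-1}$ maps $\mc D(\he n,\rn{N_h})$ into $\mc E(\he n,\rn{N_h})$ by Proposition \ref{treves}~(1), and $\mc E'(\he n,\rn{N_h})$ into $\mc D'(\he n,\rn{N_h})$ because the convolution of a compactly supported distribution with an element of $\mc D'$ is well defined, Proposition \ref{treves}~(3).

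The hardest input will be Rumin's verification that $\Delta_{\he{},h}$ satisfies Rockland's condition: $\Delta_{\he{},h}$ is fourth order and fails to be sub-elliptic in the middle degrees, so injectivity of $\pi(\Delta_{\he{},h})$ on smooth vectors has to be established by hand, exploiting the precise algebraic structure of Rumin's complex together with the spectral theory of the sub-Laplacian under the Schr\"odinger representations. The other delicate point will be the critical case $\mu=Q$: constructing the logarithmic fundamental solution, checking that its horizontal derivatives are genuine kernels of type $Q-1$, and identifying the residual term in (iv) as a constant rather than something more complicated.
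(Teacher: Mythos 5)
Your proposal is correct and follows essentially the same route as the source the paper relies on: the paper itself offers no proof of Theorem \ref{global solution} but cites \cite{rumin_jdg}, Section 3, for the Rockland condition and maximal hypoellipticity of $\Delta_{\he{},h}$, and \cite{BFT3}, Theorems 3.1 and 4.1, for the Folland-type construction of the (matrix-valued) homogeneous fundamental solution, the logarithmic correction in the critical case $\mu=Q$, and the Liouville-type argument (cf.\ Remark \ref{pavidi}) identifying $\Delta_{\he{},h}^{-1}\Delta_h\alpha-\alpha$ with $0$ or a constant. Your homogeneity bookkeeping for the orders of $d_c$ and $d_c^*$ in the various degrees, and your observation that $\mu=Q$ occurs only on $\he 1$, are both accurate.
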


{   \begin{remark}\label{pavidi}
If $\mu<Q$, 
 $ \Delta_{\he{},h} (\Delta_{\he{},h}^{-1} - \ccheck \Delta_{\he{},h}^{-1}) = 0$ and hence $\Delta_{\he{},h}^{-1} = \ccheck \Delta_{\he{},h}^{-1}$,
by the Liouville-type theorem of \cite{BFT3}, Proposition 3.2.
\end{remark}

\begin{remark} From now on, if there are no possible misunderstandings, we identify $\Delta_{\he{},h}^{-1}$ with its kernel.
\end{remark}
 }

\begin{lemma}[see \cite{BFP2}, Lemma 4.11]\label{comm} 
If $\phi\in\mc D(\he n, E_0^h)$ and $n\ge 1$, then
\begin{itemize}
\item[i)]$
d_c \Delta^{-1}_{\mathbb H, h}\phi = \Delta^{-1}_{\mathbb H, h+1} d_c\phi$, \qquad $h=0,1,\dots, 2n$, 
\qquad $h\neq n-1, n+1$.

\item[ii)] $d_c \Delta^{-1}_{\mathbb H, n-1}\phi = d_cd^*_c\Delta^{-1}_{\mathbb H, n} d_c\phi$ \qquad ($h=n-1$).

\item[iii)]$
d_cd^*_c d_c \Delta^{-1}_{\mathbb H, n+1}\phi = \Delta^{-1}_{\mathbb H, n+2} d_c\phi$,
 \qquad ($h=n+1$).

\item[iv)]$d^*_c \Delta^{-1}_{\mathbb H, h}\phi = \Delta^{-1}_{\mathbb H, h-1} d^*_c\phi$
 \qquad $ h=1,\dots, 2n+1$, \qquad $h\neq n, n+2$.
 
 \item[v)] $d^*_c \Delta^{-1}_{\mathbb H, n+2}\phi = d^*_c d_c\Delta^{-1}_{\mathbb H, n+1} 
 d^*_c\phi$ \qquad ($h=n+2$).

\item[vi)]$
d^*_c d_c d^*_c  \Delta^{-1}_{\mathbb H, n}\phi = \Delta^{-1}_{\mathbb H, n-1} d^*_c \phi$,
 \qquad ($h=n$).
\end{itemize}
\end{lemma}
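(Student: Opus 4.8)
We only sketch the argument (see also \cite{BFP2}, Lemma 4.11). The six identities are instances of a single mechanism: $\Delta_{\mathbb H,h}^{-1}$ inverts $\Delta_{\mathbb H,h}$ on the right, while $d_c$ and $d^*_c$ intertwine the Laplacians of consecutive degrees up to the corrections imposed by $d_c^2=0$; moreover (iv), (v), (vi) are obtained from (i), (iii), (ii) by replacing $d_c$ with $d^*_c$ and reversing the direction of the degree shift (equivalently, by taking formal adjoints). The plan is to reduce each identity to an algebraic computation with left-invariant operators, followed by a Liouville-type uniqueness argument. I describe the scheme for (i), (ii), (iii).

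First I would record the following identities of left-invariant operators on $\mc D(\he n,E_0^\bullet)$, each obtained from Definition \ref{rumin laplacian} simply by discarding every term containing a factor $d_cd_c$ or $d^*_cd^*_c$:
\begin{itemize}
\item $d_c\,\Delta_{\mathbb H,h}=\Delta_{\mathbb H,h+1}\,d_c$ for $h\notin\{n-1,n+1\}$ (this includes $h=n$), and dually $d^*_c\,\Delta_{\mathbb H,h}=\Delta_{\mathbb H,h-1}\,d^*_c$ for $h\notin\{n,n+2\}$;
\item in degree $n$ both $d_cd^*_c$ and $d^*_cd_c$ commute with $\Delta_{\mathbb H,n}$, and in degree $n+1$ both $d_cd^*_c$ and $d^*_cd_c$ commute with $\Delta_{\mathbb H,n+1}$;
\item on $E_0^{n-1}$, $\;d_c\,\Delta_{\mathbb H,n-1}=d_cd^*_cd_c\;$ and $\;\Delta_{\mathbb H,n}\,d_c=(d_cd^*_c)^2d_c$; on $E_0^{n+1}$, $\;\Delta_{\mathbb H,n+2}\,d_c=d_cd^*_cd_c$.
\end{itemize}

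Next, for $\phi\in\mc D(\he n,E_0^h)$, I would apply to both sides of the asserted identity the Laplacian acting in the degree of those sides ($\Delta_{\mathbb H,h+1}$ for (i), $\Delta_{\mathbb H,n}$ for (ii), $\Delta_{\mathbb H,n+2}$ for (iii)) and simplify using the identities above together with $\Delta_{\mathbb H,k}\Delta_{\mathbb H,k}^{-1}\alpha=\alpha$ for $\alpha\in\mc D$ (Theorem \ref{global solution}). For (i) both sides reduce to $d_c\Delta_{\mathbb H,h}\Delta_{\mathbb H,h}^{-1}\phi=d_c\phi$. For (ii): since $d_cd^*_c$ commutes with $\Delta_{\mathbb H,n}$, the right-hand side becomes $d_cd^*_cd_c\phi$, while the left-hand side becomes $(d_cd^*_c)^2d_c\,\Delta_{\mathbb H,n-1}^{-1}\phi$, which equals $d_cd^*_cd_c(\mathrm{Id}-d_cd^*_c\,\Delta_{\mathbb H,n-1}^{-1})\phi$ after writing $d^*_cd_c=\Delta_{\mathbb H,n-1}-d_cd^*_c$ on $E_0^{n-1}$, hence equals $d_cd^*_cd_c\phi$ because $d_cd_c=0$. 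Case (iii) is the same, using that $d_cd^*_c$ commutes with $\Delta_{\mathbb H,n+2}$. Cases (iv), (v), (vi) go through verbatim with $d^*_c$ in place of $d_c$ (or by adjunction, using self-adjointness of $\Delta_{\mathbb H,h}$ and, away from $\mu=Q$, $\Delta_{\mathbb H,h}^{-1}=\ccheck\Delta_{\mathbb H,h}^{-1}$, Remark \ref{pavidi}).

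Finally I would promote ``equality after applying $\Delta_{\mathbb H,k}$'' to equality. In each identity the difference $w$ of the two sides is a section of $E_0^k$, smooth on all of $\he n$ (because $\Delta_{\mathbb H,\bullet}^{-1}$ maps $\mc D$ into $\mc E$), with $\Delta_{\mathbb H,k}w=0$. Carrying the differential operators in front of each $\Delta^{-1}$ onto the convolution kernel --- legitimate since $\phi\in\mc D$ and $d_c$, $d^*_c$ are left-invariant, cf.\ Lemma \ref{orsay march} --- writes each side as the convolution of a compactly supported form with a horizontal derivative of one of the kernels of Theorem \ref{global solution}; by part (ii) of that theorem such a derivative is a kernel of type $\ge 1$ (in degrees $n,n+1$, where the kernel itself is only logarithmic, a single differentiation already produces a kernel of type $Q-1$), so it has strictly negative homogeneity and both sides, hence $w$, tend to $0$ at infinity. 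A $\Delta_{\mathbb H,k}$-harmonic section of $E_0^k$ that vanishes at infinity is identically $0$, by the Liouville-type theorem of \cite{BFT3}, Proposition 3.2 (applied as in Remark \ref{pavidi}); this yields all six identities. The step I expect to require the most care is precisely this last one in degrees $n$ and $n+1$ when $n=1$, i.e.\ $\mu=Q$: there $\Delta^{-1}$ is merely a right inverse and its kernel does not decay, so one genuinely has to use that the operators $d_c$, $d^*_c$ standing in front of it lower the homogeneity enough to restore decay before the Liouville theorem can be invoked.
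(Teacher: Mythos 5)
Your argument is correct, and it is essentially the argument of record: the paper itself gives no proof of Lemma \ref{comm}, deferring entirely to \cite{BFP2}, Lemma 4.11, and the proof there follows exactly your scheme --- the algebraic intertwining relations obtained from Definition \ref{rumin laplacian} and $d_c^2=(d_c^*)^2=0$, reduction to $\Delta_{\he{},k}w=0$ via the right-inverse property of Theorem \ref{global solution} iii), and the Liouville-type theorem of \cite{BFT3}, Proposition 3.2, exactly as the paper itself uses it in Remark \ref{pavidi}. All of your operator identities check out, including the $h=n$ case of $d_c\Delta_{\he{},n}=\Delta_{\he{},n+1}d_c$ and the substitutions $d_c^*d_c=\Delta_{\he{},n-1}-d_cd_c^*$ and $(d_c^*d_c)^2=\Delta_{\he{},n+1}-d_cd_c^*$. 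One small precision on the step you rightly flag as delicate: when $\mu=Q$ (i.e.\ $n=1$, degrees $n$ and $n+1$) the right-hand sides such as $\Delta_{\he{},n+1}^{-1}d_c\phi$ have \emph{no} differential operator ``in front of'' $\Delta^{-1}$, so the decay at infinity there is not obtained from Lemma \ref{orsay march}, but from the identity \eqref{convolution by parts}, which transfers the derivatives of $d_c\phi$ onto the (logarithmic) kernel as reflected derivatives, producing a kernel of type $Q-2$ and hence the required vanishing at infinity; with that reading your Liouville step closes in all cases.
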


\section{Dual formulations and proof of Theorem \ref{poincare infty}}\label{caldo}

The interior Poincar\'e inequality of Theorem \ref{poincare infty} relies on three
tools:
\begin{itemize}
\item[i)] the formulation by duality of the interior Poincar\'e inequality $\he{}$-$\mathrm{Poincar\acute{e}}_{p, \infty}(h)$
(see Proposition \ref{poincaredual}) below;
\item[ii)] the integration by parts formula of Lemma \ref{byparts};
\item[iii)] the relationship between  
$\he{}$-$\mathrm{Sobolev}_{ 1,p'}(2n+2-h) $ and $\he{}$-$\mathrm{Poincar\acute e}_{p,\infty}(h) $,
when $1<p<\infty$ and $p'$ is the dual exponent of $p$.
\end{itemize}

\begin{proposition}
\label{poincaredual}
Assume that $1 \le  h <2n+1$. Let $B$, $B_\lambda$ be concentric balls as in Definition \ref{poincare def}. Take $1<p<\infty$. Then the $\he{}$-$\mathrm{Poincar\acute{e}}_{p, \infty}(h)$
 inequality in $E_0^h$ holds if and only if there exists a constant $C$ such that for every $d_c$-closed differential $h$-form $\omega$ on $L^p(B_\lambda,E_0^h)$ and every 
 smooth differential $(2n+1-h)$-form $\alpha$ with compact support in $B$,
\begin{equation}\label{dual 3/4}
|\int_{B}\omega\wedge\alpha|\leq C\,\|\omega\|_{L^p(B_\lambda,E_0^h)}\|d_c\alpha\|_{L^{1}(B,E_0^{2n+2-h})}.
\end{equation}
An analogous statement holds for the global Poincar\'e inequality on $\he n$.
\end{proposition}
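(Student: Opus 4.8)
The plan is to prove the two implications separately by a standard duality argument based on the Hahn-Banach theorem, using the integration by parts formula of Lemma \ref{byparts} as the bridge between a primitive $\phi$ of $\omega$ and the test form $\alpha$.

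\medskip

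\textbf{($\Rightarrow$) From Poincar\'e to the dual inequality.} Suppose $\he{}$-$\mathrm{Poincar\acute{e}}_{p,\infty}(h)$ holds. Given a $d_c$-closed $\omega\in L^p(B_\lambda,E_0^h)$ and $\alpha\in\mc D(B,E_0^{2n+1-h})$, the inequality yields a primitive $\phi\in L^\infty(B,E_0^{h-1})$ with $d_c\phi=\omega$ on $B$ and $\|\phi\|_{L^\infty(B,E_0^{h-1})}\le C\|\omega\|_{L^p(B_\lambda,E_0^h)}$. Then, applying Lemma \ref{byparts} i) on the contractible open set $B$ (with $\phi$ having merely $L^\infty\subset L^1_{\mathrm{loc}}$ coefficients, which is allowed by the last sentence of part i) of that lemma),
\begin{equation*}
\int_B \omega\wedge\alpha = \int_B (d_c\phi)\wedge\alpha = (-1)^{h}\int_B \phi\wedge d_c\alpha,
\end{equation*}
so that $|\int_B\omega\wedge\alpha|\le \|\phi\|_{L^\infty(B,E_0^{h-1})}\|d_c\alpha\|_{L^1(B,E_0^{2n+2-h})}\le C\|\omega\|_{L^p(B_\lambda,E_0^h)}\|d_c\alpha\|_{L^1(B,E_0^{2n+2-h})}$, which is \eqref{dual 3/4}. (One should be careful that the pointwise pairing $\phi\wedge d_c\alpha$ of an $L^\infty$ form with an $L^1$ form is integrable, and that the degrees match: $h-1$ and $2n+2-h$ sum to $2n+1$.)

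\medskip

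\textbf{($\Leftarrow$) From the dual inequality to Poincar\'e.} Fix a $d_c$-closed $\omega\in L^p(B_\lambda,E_0^h)$. Consider the linear space $V:=\{d_c\alpha : \alpha\in\mc D(B,E_0^{2n+1-h})\}\subset L^1(B,E_0^{2n+2-h})$ and define on it the functional $L(d_c\alpha):=(-1)^{h}\int_B\omega\wedge\alpha$. This is well-defined: if $d_c\alpha=d_c\alpha'$ then $\alpha-\alpha'$ is $d_c$-closed and compactly supported in $B$, so by Lemma \ref{byparts} iii) (with exponents $p$ and $p'$, using $\omega\in L^p_{\mathrm{loc}}(B,E_0^h)$ and $\alpha-\alpha'\in L^{p'}$ compactly supported) $\int_B\omega\wedge(\alpha-\alpha')=0$. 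By \eqref{dual 3/4}, $|L(d_c\alpha)|\le C\|\omega\|_{L^p(B_\lambda,E_0^h)}\|d_c\alpha\|_{L^1(B)}$, so $L$ is bounded on $V$; extend it by Hahn-Banach to a bounded functional on all of $L^1(B,E_0^{2n+2-h})$, represented by some $\phi\in L^\infty(B,E_0^{h-1})$ (identifying the dual of $L^1$ of sections with $L^\infty$ of sections of the appropriately-degreed bundle, via the wedge-integral pairing) with $\|\phi\|_{L^\infty(B)}\le C\|\omega\|_{L^p(B_\lambda)}$. It remains to check $d_c\phi=\omega$ in the distributional sense on $B$: for $\alpha\in\mc D(B,E_0^{2n+1-h})$, by Lemma \ref{byparts} i) the distributional pairing $\langle d_c\phi,\alpha\rangle$ equals $(-1)^{h}\int_B\phi\wedge d_c\alpha = (-1)^h L(d_c\alpha)\cdot(-1)^h=\int_B\omega\wedge\alpha$, i.e. $d_c\phi=\omega$. (Signs and the exact form of the pairing must be tracked, but this is routine.)

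\medskip

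\textbf{Main obstacle.} The analytically delicate point is the identification of the dual of the $L^1$-space of Rumin forms with the corresponding $L^\infty$-space via the wedge pairing, and more specifically ensuring that the Hahn-Banach extension is represented by an honest $L^\infty$ section rather than just a finitely additive functional — this is where one uses that $L^\infty$ of sections is genuinely $(L^1)^*$ over a $\sigma$-finite measure space, together with the fact that the wedge product $E_0^{h-1}\otimes E_0^{2n+2-h}\to \Lambda^{2n+1}$ composed with integration is a perfect pairing of the fibers (equivalently, the bundles $E_0^{h-1}$ and $E_0^{2n+2-h}$ are in nondegenerate duality, which follows from Rumin's construction as recorded in Theorem \ref{main rumin new}). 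The global case is identical, replacing $B$, $B_\lambda$ by $\he n$ throughout and using the global versions of Lemma \ref{byparts}.
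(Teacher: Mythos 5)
Your proposal is correct and follows essentially the same route as the paper: the forward implication via Lemma \ref{byparts} i) applied to an $L^\infty$ primitive, and the converse via a Hahn--Banach extension of the functional $d_c\alpha\mapsto\int_B\omega\wedge\alpha$ on the subspace of exact forms in $L^1(B,E_0^{2n+2-h})$, represented by an $L^\infty$ form through the wedge pairing. The only cosmetic difference is that you justify well-definedness of the functional via Lemma \ref{byparts} iii), whereas one can also get it directly by applying the assumed inequality \eqref{dual 3/4} to $\alpha-\alpha'$ with $d_c(\alpha-\alpha')=0$; both are fine.
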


\begin{proof}
Suppose $\omega$ is a $h$-form satisfying Definition \ref{poincare def}.
If $\omega_{|B}=d_c\phi$ with $\phi\in L^\infty(\he n, E_0^{h-1})$ as in Definition \ref{poincare def}, and $\alpha \in \mathcal D(B, E_0^{2n+1-h})$, then, according to Lemma \ref{byparts} i),
\begin{align*}
|\int_{B}\omega\wedge\alpha|
&=|\int_{B}\phi\wedge d_c\alpha|\\
&\leq\|\phi\|_{L^{\infty}(B,E^{h-1})}\|d_c\alpha\|_{L^{1}(B, E_0^{2n+2-h})}.
\end{align*}
Since $\|\phi\|_{L^{\infty}(B,E_0^{h-1})}$ can be estimated by $\|\omega\|_{L^{p}(B_\lambda,E_0^h)}$, inequality
\eqref{dual 3/4} follows.

Conversely, assume that for all forms $\alpha\in \mathcal D(B;E_0^{2n+1-h})$, 
\begin{eqnarray*}
|\int_{B}\omega\wedge\alpha|\leq C\,\|\omega\|_{L^{p}(B_\lambda;E_0^h)}\|d_c\alpha\|_{L^{1}(B;
E_0^{2n+2-h})}.
\end{eqnarray*}
Define a linear functional $\eta$ on differentials of smooth $(2n+1-k)$-forms with compact support in $B$ as follows. If $\beta=d_c\alpha$, $\alpha\in\mathcal D(B; E_0^{2n+1-h})$, set
\begin{eqnarray*}
\eta(\beta)=\int_{B}\omega\wedge\alpha.
\end{eqnarray*}
Then $\eta$ is well defined since $\omega\in L^1_{\mathrm{loc}}$, and is continuous in $L^{1}(B;E_0^{2n+2-h})$-norm, by \eqref{dual 3/4}, i.e.
\begin{eqnarray*}
|\eta(\beta)|\leq C\,\|\omega\|_{L^{p}(B_\lambda;E_0^{h})}\|\beta\|_{L^{1}(B;E_0^{2n+2-h})}.
\end{eqnarray*}
By the Hahn-Banach theorem, $\eta$ extends to a linear functional on all of $L^{1}(B; E_0^{2n+2-h})$, with the same norm. Such a functional is represented by a differential form $\phi\in 
L^\infty(B;E_0^{h-1})$ as follows,
\begin{eqnarray*}
\eta(\beta)=\int_{B}\phi\wedge\beta.
\end{eqnarray*}
The $L^\infty$-norm of $\phi$ is at most $C\,\|\omega\|_{L^{p}(B_\lambda;E_0^{h})}$.
Since, for all forms $\alpha\in \mathcal D(B; E_0^{2n+1-h})$, 
\begin{eqnarray*}
\int_{B}\phi\wedge d_c\alpha=\eta(d_c\alpha)=\int_{B}\omega\wedge\alpha,
\end{eqnarray*}
one concludes that $d_c\phi=\omega$ on $B$ in the distributional sense.

The proof of the statement for the global Poincar\'e inequality
can be carried out repeating verbatim the same arguments.
\end{proof}

\begin{proposition}
\label{sobolev implies poincare bis}   
 If $2\le h \le 2n+1$, $1<p<\infty$ and $p'$ is the dual exponent of $p$, then  
the interior $\he{}$-$\mathrm{Sobolev}_{1,p'}(2n+2-h) $ inequality implies the interior $\he{}$-$\mathrm{Poincar\acute e}_{p,\infty}(h) $ inequality.

An analogous statement holds for global inequalities on $\he n$.
\end{proposition}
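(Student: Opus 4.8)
The plan is to deduce the interior $\he{}$-$\mathrm{Poincar\acute e}_{p,\infty}(h)$ inequality from the interior $\he{}$-$\mathrm{Sobolev}_{1,p'}(2n+2-h)$ inequality by invoking the duality criterion of Proposition \ref{poincaredual}. So I would start with a $d_c$-closed $h$-form $\omega\in L^p(B_\lambda,E_0^h)$ and a smooth, compactly supported $(2n+1-h)$-form $\alpha$ in $B$, and aim to estimate $|\int_B \omega\wedge\alpha|$ by $C\|\omega\|_{L^p(B_\lambda,E_0^h)}\|d_c\alpha\|_{L^1(B,E_0^{2n+2-h})}$. The natural move is to solve a primitive problem \emph{for $\alpha$ at the other end of the complex}: the form $d_c\alpha$ is a $d_c$-exact (hence $d_c$-closed) $(2n+2-h)$-form with compact support in $B$ and lies in $L^1$, so the interior Sobolev$_{1,p'}(2n+2-h)$ inequality provides a compactly supported $(2n+1-h)$-form $\beta\in L^{p'}(B_\mu,E_0^{2n+1-h})$, for the appropriate dilate $B_\mu$ of $B$, with $d_c\beta = d_c\alpha$ on $B_\mu$ and $\|\beta\|_{L^{p'}(B_\mu)}\le C\|d_c\alpha\|_{L^1(B)}$.

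Next I would set $\gamma := \alpha-\beta$. Then $\gamma$ is a $d_c$-closed $(2n+1-h)$-form on $B_\mu$ (both $\alpha$ and $\beta$ are defined there once we extend $\alpha$ by zero), and $\gamma$ is compactly supported in $B_\mu$. The degrees $h$ and $(2n+1-h)$ are complementary in the sense of Lemma \ref{byparts}, and $\omega$ is $d_c$-closed in $L^p_{\mathrm{loc}}$ while $\gamma$ is $d_c$-closed in $L^{p'}$ with compact support; if $B_\mu$ is contractible (which it is, being a Kor\'anyi ball) then case (iii) of Lemma \ref{byparts} applies and gives $\int_{B_\mu}\omega\wedge\gamma = 0$, i.e. $\int \omega\wedge\alpha = \int\omega\wedge\beta$ over $B_\mu$ — where here one must be slightly careful that $\alpha$ is supported in $B\subset B_\mu$, so $\int_{B_\mu}\omega\wedge\alpha = \int_B\omega\wedge\alpha$. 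Then H\"older's inequality yields
\begin{equation*}
\Bigl|\int_B\omega\wedge\alpha\Bigr| = \Bigl|\int_{B_\mu}\omega\wedge\beta\Bigr| \le \|\omega\|_{L^p(B_\mu,E_0^h)}\,\|\beta\|_{L^{p'}(B_\mu,E_0^{2n+1-h})} \le C\,\|\omega\|_{L^p(B_\mu,E_0^h)}\,\|d_c\alpha\|_{L^1(B,E_0^{2n+2-h})}.
\end{equation*}
After renaming the dilation parameters so that $B_\mu$ plays the role of the outer ball $B_\lambda$ (enlarging $\lambda$ if necessary, and rescaling), this is exactly inequality \eqref{dual 3/4}, so Proposition \ref{poincaredual} gives the desired interior $\he{}$-$\mathrm{Poincar\acute e}_{p,\infty}(h)$. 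The global statement follows by the identical computation using the global Sobolev inequality and the global form of Proposition \ref{poincaredual}.

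I expect the main obstacle to be the careful bookkeeping of the nested balls and the domains on which each form is defined and closed: the Sobolev inequality outputs a primitive on a \emph{larger} ball $B_\mu$ than the one where $d_c\alpha$ lives, so I must check that $\omega$ is still $L^p$ and $d_c$-closed on $B_\mu$ (it is, since we took it on $B_\lambda$ and can arrange $B_\mu\subseteq B_\lambda$ by choosing the constants appropriately), that the cancellation integral $\int\omega\wedge\gamma$ is over a contractible set containing the supports, and that extending $\alpha$ by zero does not create a spurious distributional $d_c\alpha$ across $\partial B$ — which it does not, since $\alpha$ is compactly supported strictly inside $B$. A secondary point to verify is that the hypothesis "$\psi\in L^{p'}$ compactly supported, $d_c$-closed" in Lemma \ref{byparts}(iii) is genuinely met by $\gamma=\alpha-\beta$: $\alpha$ is smooth compactly supported, $\beta\in L^{p'}$ compactly supported by the Sobolev conclusion, and $d_c\gamma = d_c\alpha-d_c\beta = 0$ on $B_\mu$, so $\gamma$ qualifies. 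Once these domain issues are pinned down the argument is short.
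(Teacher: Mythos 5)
Your argument is essentially identical to the paper's proof: apply the interior $\he{}$-$\mathrm{Sobolev}_{1,p'}(2n+2-h)$ inequality to $d_c\alpha$, subtract to obtain a $d_c$-closed, compactly supported $L^{p'}$ form $\beta-\alpha$, kill the cross term via Lemma \ref{byparts}(iii), and conclude with H\"older's inequality and Proposition \ref{poincaredual}. The only point you skip is the endpoint $h=2n+1$, where $\beta-\alpha$ has degree $0$ and Lemma \ref{byparts}(iii) does not apply (it is stated for complementary degrees $h$ and $2n+1-h$ with $1\le h\le 2n$); the paper disposes of this case by noting that a compactly supported function with $d_c(\beta-\alpha)=0$ vanishes, so $\beta=\alpha$ there.
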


\begin{proof} To prove the assertion, we argue by duality relying on Proposition
\ref{poincaredual}. Let $B$ and $B_\lambda$ be concentric balls, 
such that  $\he{}$-$\mathrm{Sobolev}_{1,p'}(2n+2-h) $ inequality holds in $B,B_\lambda$. Take a $d_c$-closed $h$-form $\omega$ in $L^p(B_\lambda,E_0^h)$ and 
an arbitrary smooth differential $(2n+1-h)$-form $\alpha$ with compact support in $B$. 
By Sobolev inequality, there exists 
a compactly supported differential $(2n+1-h)$-form $\beta\in L^{p'}(B_\lambda,E_0^{2n+1-h})$ such that $d_c\beta=d_c\alpha$ in $B$ and
\begin{eqnarray}\label{H Sobolev 6 apr bis}
\|\beta\|_{L^{p'}(B_\lambda,E_0^{2n+1-h})}\leq C\,\|d_c\alpha\|_{L^{1}(B,E_0^{2n+2-h})}.
\end{eqnarray}
If $h=2n+1$, $\alpha$ and $\beta$ are compactly supported functions and $d_c(\beta-\alpha)=0$, hence $\beta=\alpha$. Otherwise, since $\psi=\beta-\alpha\in L^{p'}(B_\lambda,E_0^{2n+1-h})$ is  $d_c$-closed, Lemma \ref{byparts} iii) implies that
\begin{equation*}\begin{split}
\int_{B_\lambda}\omega\wedge(\beta-\alpha) = 0.
\end{split}\end{equation*}
Therefore in both cases, by H\"older inequality and by \eqref{H Sobolev 6 apr bis},
\begin{equation*}\begin{split}
|\int_{B}\omega\wedge\alpha|=|\int_{B_\lambda}\omega\wedge\alpha|=|\int_{B_\lambda}\omega\wedge\beta|
\leq C\,\|\omega\|_{L^p(B_\lambda,E_0^h)}\|d_c\alpha\|_{L^{1}(B,E_0^{2n+2-h})}.
\end{split}\end{equation*}
By Proposition
\ref{poincaredual}, this implies $\he{}$-$\mathrm{Poincar\acute e}_{p,\infty}(h) $.

The global Sobolev inequality implies the global Poincar\'e inequality in the same manner.

\end{proof}

\begin{proof}[{\bf Proof of Theorem \ref{poincare infty}}]
Theorem \ref{poincare infty} follows straightforwardly,  combining the $L^1$ Sobolev inequality proven in \cite{BFP3}, Corollary 6.5, and previous
Proposition \ref{sobolev implies poincare bis}.

\end{proof}

 \section{Homotopy formul\ae\, in $L^\infty$}\label{section homotopy infty}

The following global homotopy formula has been proven in $\mc D(\he n, E_0^\bullet)$ in
 \cite{BFP2}, Proposition 6.9. However we have to stress that here we deal with $L^\infty$ forms, hence we have to adapt the proof since we cannot rely on a density argument in $L^\infty$.

\begin{proposition}\label{homotopy formulas L infty} If $\alpha\in L^\infty (\he n, E_0^h)$ is compactly supported,
 then the following homotopy formulas hold:
 there exist operators $K_1,K_2$
 such that
\begin{itemize}
\item[i)] if $h\neq n, n+1$, then $\alpha = d_c K_1 \alpha +  K_1d_c \alpha $ in the sense of distributions, where $K_1$ is  associated with a kernel $k_1$ of type 1;
\item[ii)]  if $h = n$, then $\alpha = d_c K_1 \alpha +  K_2 d_c\alpha$ in the sense of distributions, where $K_1$ and $ K_2$ are  associated with kernels $k_1,  k_2$ of type 1 and 2,
respectively;
\item[iii)]  if $h = n+1 $, then $\alpha = d_c K_2 \alpha +  K_1 d_c\alpha$ in the sense of distributions, where $K_2$ and $ K_1$ are  associated with kernels $k_2,  k_1$ of type 2
and $1$, respectively.
\end{itemize}

\end{proposition}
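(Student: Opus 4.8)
The plan is to establish the homotopy formulas first for smooth compactly supported forms and then extend to $L^\infty$ by a duality/regularization argument that bypasses density in $L^\infty$. For $\alpha\in\mc D(\he n,E_0^h)$, the classical starting point is the identity $\alpha=\Delta_{\he{},h}\Delta_{\he{},h}^{-1}\alpha$ on compactly supported forms from Theorem \ref{global solution} iii) (noting $\mu=Q$ when $h=n,n+1$, so one must invoke iv) and absorb the additive constant $\beta_\alpha$, which is harmless since a constant form is $d_c$-closed and $d_c$-exact on all of $\he n$, or rather one works with the reduced inverse; this subtlety has to be tracked). Expanding $\Delta_{\he{},h}$ according to Definition \ref{rumin laplacian} and commuting $d_c$, $d^*_c$ past $\Delta^{-1}$ using Lemma \ref{comm}, one collects the $d_c(\cdots)$ terms into $d_c K_1\alpha$ (respectively $d_c K_2\alpha$ when $h=n+1$) and the $(\cdots)d_c$ terms into $K_1 d_c\alpha$ (respectively $K_2 d_c\alpha$ when $h=n$), thereby identifying $K_1$, $K_2$ as convolution operators whose kernels $k_1$, $k_2$ are built from the entries $K_{ij}$ of $\Delta_{\he{},h}^{-1}$ and their horizontal derivatives. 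The homogeneity bookkeeping (a type-$\mu$ kernel differentiated $j$ times horizontally has type $\mu-j$) shows $k_1$ has type $1$ and $k_2$ has type $2$; this is exactly the computation done in \cite{BFP2}, Proposition 6.9, so I would cite it for the smooth case rather than redo it.

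Next I would upgrade from $\mc D$ to $L^\infty$ compactly supported forms. The operators $K_1$, $K_2$ are right-convolution by fixed kernels of type $1$, $2$ respectively, which lie in $L^1_{\mathrm{loc}}$; by Proposition \ref{freccetta}, convolution of a compactly supported $L^\infty$ function against an $L^1_{\mathrm{loc}}$ kernel is a well-defined $L^\infty$ (in fact locally bounded) object, so $K_1\alpha$, $K_2\alpha$, $K_1 d_c\alpha$ and $K_2 d_c\alpha$ all make sense as distributions — here $d_c\alpha$ is taken in the distributional sense and is again a compactly supported distribution of the appropriate order, so one should really phrase the input as: $\alpha$ compactly supported $L^\infty$, hence $\alpha\in\mc E'(\he n,E_0^h)$, and all convolutions are the $\mc E'\ast\mc D'$ convolutions of Proposition \ref{treves} (3). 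To prove the identity $\alpha=d_cK_1\alpha+K_1d_c\alpha$ (and the two variants) as distributions, I would test against $\psi\in\mc D(\he n,E_0^{h})$: using \eqref{convolutions var}, $\eqref{convolution by parts}$ and Lemma \ref{orsay march} to move the $d_c$'s and the convolutions onto $\psi$, the identity reduces to the corresponding \emph{transpose} identity applied to the smooth form $\psi$ convolved with the (checked) kernels — which is precisely the already-established smooth homotopy formula for the complementary degree, combined with $d_c^2=0$ and the self-adjointness-type relations $\ccheck\Delta_{\he{},h}^{-1}=\Delta_{\he{},h}^{-1}$ from Remark \ref{pavidi} (valid when $\mu<Q$; for $\mu=Q$ the analogous symmetry has to be quoted from \cite{BFT3}).

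The main obstacle I anticipate is not the algebra of the homotopy formula but the careful handling of the $L^\infty$/compact-support bookkeeping in the absence of density: one must make sure that every convolution written down is legitimate (one factor compactly supported), that the commutation identities of Lemma \ref{comm}, originally stated for $\phi\in\mc D$, transfer to $\phi\in\mc E'$ by testing against $\mc D$, and — in the critical degrees $h=n,n+1$ where $\Delta_{\he{},h}$ is fourth order and $\mu=Q$ — that the additive-constant ambiguity of Theorem \ref{global solution} iv) does not spoil the formula. I expect the resolution to be that the constant $\beta_\alpha$ contributes a term that is itself exact with an $L^\infty$ (even constant-coefficient) primitive and $d_c$-closed, so it can be absorbed, but verifying this cleanly, together with the fact that the type-$1$ and type-$2$ kernels are genuinely the same operators appearing on both "sides" of the formula (so that the degree-$n$ case gets $K_1$ on the $d_c K_1\alpha$ side and $K_2$ on the $K_2 d_c\alpha$ side, and vice versa in degree $n+1$, dictated by which of $d_cd^*_c$ or $(d^*_cd_c)^2$ sits where in $\Delta_{\he{},h}$), is the delicate point. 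Finally I would remark that all operators, being left convolutions, commute with left translations and with $d_c$, which is what makes the whole scheme work.
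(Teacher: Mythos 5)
Your proposal is correct and follows essentially the same route as the paper: a duality argument that inserts $\Delta_{\he{},h}\Delta_{\he{},h}^{-1}$ on the smooth test form, commutes $d_c$ and $d_c^*$ past $\Delta_{\he{},h}^{-1}$ via Lemma \ref{comm}, and transfers the resulting operators across the pairing using the symmetry of the kernel and the $\mc E'$--$\mc D'$ convolution calculus (with \cite{BFP2}, Proposition 6.9 quoted for the critical degrees, exactly as the paper does). Your worry about the additive constant $\beta_\alpha$ is moot: the identity actually needed is $\Delta_{\he{},h}\Delta_{\he{},h}^{-1}\phi=\phi$, which Theorem \ref{global solution} iii) provides exactly, with no constant, for every $\mu$ (the constant only appears in the reversed composition $\Delta_{\he{},h}^{-1}\Delta_{\he{},h}$ when $\mu=Q$).
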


\begin{proof} The proof can be carried out by duality. Consider for instance the case i), and let $\phi\in \mc D(\he n, E_0^h)$ be a test form. 
By  Theorem \ref{global solution},  
\begin{equation*}\begin{split}
\Scal{\alpha}{\phi}_{\mc D',\mc D} & = \Scal{\alpha}{(d_c d_c^*+ d_c^* d_c )\Delta_{\mathbb H}^{-1}\phi}_{\mc D',\mc D}=\Scal{\alpha}{(d_c d_c^*+ d_c^* d_c )\Delta_{\mathbb H}^{-1}\phi}_{\mc E',\mc E}
\\&
= \Scal{\alpha}{\Delta_{\mathbb H}^{-1}d_c d_c^* \phi}_{\mc E',\mc E}
+ \Scal{\alpha}{ d_c^*\Delta_{\mathbb H}^{-1}d_c\phi}_{\mc E',\mc E}
\qquad\mbox{(by Lemma \ref{comm})}
\\&
= \Scal{ \Delta_{\mathbb H}^{-1} \alpha}{d_cd_c^* \phi}_{\mc D',\mc D}
+ \Scal{d_c^* \Delta_{\mathbb H}^{-1}d_c\alpha}{\phi}_{\mc D',\mc D}
\qquad\mbox{(since $d_c\alpha\in\mc E'$) }
\\&
= \Scal{ d_c d_c^*\Delta_{\mathbb H}^{-1} \alpha}{ \phi}_{\mc D',\mc D}
+ \Scal{d_c^* \Delta_{\mathbb H}^{-1}d_c\alpha}{\phi}_{\mc D',\mc D}
\\&
=:\Scal{d_c K_1 \alpha +  K_1d_c\alpha }{\phi}_{\mc D',\mc D},
\end{split}\end{equation*}
and the assertion follows since $K_1:=d_c^* \Delta_{\mathbb H}^{-1}$ is a kernel of type 1
by Theorem \ref{global solution}. 

The proofs of ii) and iii) can be carried out through similar duality arguments
keeping in mind  \cite{BFP2}, Proposition 6.9.

\end{proof}

\begin{remark}\label{may 21} To avoid cumbersome notations, from now on we denote by $K_0$ one of the convolution 
operators $K_1$, $K_2$, so that the homotopy formulas of Proposition \ref{homotopy formulas L infty}
can be written concisely as follows: if $\alpha\in L^\infty (\he n, E_0^h)$ is compactly supported
then $K_0 \alpha$ and $K_0 d_c\alpha$ are well defined
distributions by Lemma \ref{orsay march} and
\begin{equation}\label{may 21 eq:1}
\alpha = d_c K_0 \alpha +  K_0d_c \alpha,
\end{equation}
where $K_0$ is associated with a kernel of type 1 or 2, depending on the degree of $\alpha$.
Notice that in any case, $K_0$ belongs to $L^1_{\mathrm{loc}}$.

\end{remark}

\begin{proposition}\label{smoothing-A}
Let $U\Subset U'$ be bounded open sets in $\he n$. For $h=1,\ldots,2n$, for every $s\in\N$, there exist  a bounded operator 
\begin{equation}\label{consiglio}
	T:L^\infty(U',E_0^\bullet) \to L^\infty(U,E_0^{\bullet-1})
\end{equation}
and
a smoothing  
operator 
\begin{equation}\label{dec 29 1}
S:L^\infty(U',E_0^\bullet)  \to \mc E(U,E_0^{\bullet-1})
\end{equation}
such that, in addition,
\begin{equation}\label{smoothing eq:1A}
S \in \mc L\big(L^\infty(U',E_0^\bullet) , W^{s,\infty}(U,E_0^{\bullet-1})\big)
\end{equation}
so that for any $h$-forms $\alpha$ in $L^\infty (U',E_0^\bullet)$   such that $d_c\alpha\in L^\infty (U',E_0^{\bullet+1})$  
  the following approximate homotopy formula holds in the sense of distributions
\begin{equation}\label{homotopy dec 27}
\alpha=d_cT\alpha+T d_c \alpha+S\alpha\qquad \text{on }U.
\end{equation}

 In addition, on forms of degree $n+1$, $T$ is bounded 
\begin{equation}\label{consiglio2}
T: L^{\infty}(U',E_0^{n+1})
 \to W^{1, \infty}(U,E_0^n).
\end{equation}

Finally, if $\alpha\in L^\infty(U',E_0^\bullet)\cap d_c^{-1}(L^\infty(U',E_0^{\bullet+1}))$ we notice that,  by difference,
$$
d_c T\alpha\in L^\infty (U, E_0^h) .
$$
\end{proposition}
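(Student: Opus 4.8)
The plan is to localize the global homotopy formula of Remark~\ref{may 21} by inserting a cut-off. First I would fix, once and for all, a function $\chi\in\mc D(U')$ with $\chi\equiv 1$ on a neighborhood of $\overline U$ (possible since $U\Subset U'$), and write $k^{(j)}$ for the convolution kernel --- of type $1$, except of type $2$ in degree $j=n+1$ --- representing the operator $K_0$ of Remark~\ref{may 21} on $E_0^j$, so that $\beta=d_c(\beta\ast k^{(j)})+(d_c\beta)\ast k^{(j+1)}$ in $\mc D'(\he n)$ for every compactly supported $\beta\in L^\infty(\he n,E_0^j)$. Then, for $1\le h\le 2n$ and $\alpha\in L^\infty(U',E_0^h)$ (extending $\chi\alpha$ by zero to $\he n$), I would set
\[
T\alpha:=\big((\chi\alpha)\ast k^{(h)}\big)\big|_U,\qquad S\alpha:=\big(([d_c,\chi]\alpha)\ast k^{(h+1)}\big)\big|_U ,
\]
where $[d_c,\chi]$ is the commutator of Lemma~\ref{leibniz}; in particular $S\alpha$ depends on $\alpha$ alone, not on $d_c\alpha$, even for $h=n$. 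Boundedness $T\colon L^\infty(U',E_0^h)\to L^\infty(U,E_0^{h-1})$ is then precisely Proposition~\ref{freccetta}, applied to the compactly supported form $\chi\alpha$ and the $L^1_{\mathrm{loc}}$ kernel $k^{(h)}$; and in degree $h=n+1$, since $W_\ell k^{(n+1)}$ is a kernel of type $1$, still in $L^1_{\mathrm{loc}}$, and $W_\ell$ commutes with right convolution, a second application of Proposition~\ref{freccetta} to $W_\ell(T\alpha)=\big((\chi\alpha)\ast W_\ell k^{(n+1)}\big)\big|_U$ yields the asserted bound $T\colon L^\infty(U',E_0^{n+1})\to W^{1,\infty}(U,E_0^n)$.

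To establish \eqref{homotopy dec 27}, I would assume in addition that $d_c\alpha\in L^\infty(U',E_0^{h+1})$, apply the global formula to $\beta=\chi\alpha$, and restrict to $U$. There $\chi\alpha=\alpha$, and $d_c(\chi\alpha)=\chi\,d_c\alpha+[d_c,\chi]\alpha$ by Lemma~\ref{leibniz}, all three terms being distributions supported in $\supp\chi\Subset U'$. Collecting the pieces gives $\alpha=d_cT\alpha+T(d_c\alpha)+S\alpha$ on $U$, the middle term being $T(d_c\alpha)$ precisely because $k^{(h+1)}$ is the kernel of $K_0$ in degree $h+1$. The final assertion of the Proposition follows immediately by difference: $d_cT\alpha=\alpha-T(d_c\alpha)-S\alpha\in L^\infty(U,E_0^h)$.

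It remains to show that $S$ is smoothing and bounded into $W^{s,\infty}(U,E_0^h)$ for every $s\in\N$. The point is that $[d_c,\chi]\alpha$ is supported in the compact set $F:=\supp W\chi$ (together with $\supp W^2\chi$ when $h=n$), which is disjoint from $\overline U$ because $\chi$ is locally constant near $\overline U$. Hence, for $x\in U$,
\[
S\alpha(x)=\int_F \big([d_c,\chi]\alpha\big)(y)\,k^{(h+1)}(y^{-1}x)\,dy ,
\]
and $y^{-1}x$ stays in a fixed compact subset of $\he n\setminus\{0\}$, on which $k^{(h+1)}$ and each $W^Ik^{(h+1)}$ are smooth and bounded. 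Differentiating under the integral sign shows $S\alpha\in\mc E(U,E_0^h)$ with $\|W^IS\alpha\|_{L^\infty(U)}\le C_I\,\|[d_c,\chi]\alpha\|_{L^1}$. When $h\ne n$ this already finishes the job, since $[d_c,\chi]=P_0^h(W\chi)$ is of order $0$ and $\|[d_c,\chi]\alpha\|_{L^1}\le C_\chi\|\alpha\|_{L^\infty(U')}$.

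The only genuine obstacle is the degree $h=n$, where $[d_c,\chi]=P_1^n(W\chi)+P_0^n(W^2\chi)$ is first order in $\alpha$, so that $P_1^n(W\chi)\alpha$ involves horizontal derivatives of $\alpha$ not controlled by the hypotheses. I would remove them by an integration by parts in $y$ inside the far convolution above: writing $P_1^n(W\chi)\alpha=\sum_j A_jW_j\alpha+A_0\alpha$ with $A_j\in\mc D(F)$ built from $W\chi$, and using that the left-invariant fields $W_j$ are divergence free (so no boundary term appears), the derivatives fall onto $A_j(y)\,k^{(h+1)}(y^{-1}x)$; the resulting kernel involves $k^{(h+1)}$ only away from the origin, hence is jointly smooth in $(x,y)$ for $x$ near $\overline U$ and $y\in F$, and $S\alpha$ again takes the form $\int_F\alpha(y)\,\ell_x(y)\,dy$ with $\ell$ smooth and compactly supported in $y$. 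This gives $S\alpha\in\mc E(U,E_0^n)$ together with $\|S\alpha\|_{W^{s,\infty}(U)}\le C_s\|\alpha\|_{L^\infty(U')}$ for all $s$, completing the proof. Throughout, the care required to handle these convolutions without density arguments in $L^\infty$ is supplied by Lemma~\ref{orsay march} and Proposition~\ref{freccetta}.
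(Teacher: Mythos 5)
Your proof is correct, but it localizes in a genuinely different way from the paper. The paper splits the \emph{kernel}: it writes $k_0=\psi_R k_0+(1-\psi_R)k_0$, takes $T$ to be convolution of the trivial extension $\alpha_0$ with the truncated near-singularity part $\psi_Rk_0\in L^1$, and lets $S$ collect the two far-field terms $d_c K'_{0,R}\alpha_0+K'_{0,R}d_c\alpha_0$, which are smooth because $(1-\psi_R)k_0$ is a smooth kernel; the identification of the middle term with $T(d_c\alpha)$ then needs the observation $(d_c\alpha_0)|_{U'}=((d_c\alpha)_0)|_{U'}$ together with Proposition \ref{treves}(6). You instead split the \emph{form}, cutting off $\alpha$ by $\chi$ and keeping the full kernel; the middle term is $T(d_c\alpha)$ automatically by Leibniz, and your $S$ is the commutator term $([d_c,\chi]\alpha)\ast k^{(h+1)}$, smooth because $\supp[d_c,\chi]\alpha$ is at positive distance from $\overline U$ so only the smooth part of the kernel is ever evaluated. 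The trade-off is that you must invoke Lemma \ref{leibniz} and, in degree $n$, confront the first-order commutator $P_1^n(W\chi)$ acting on an $\alpha$ that is merely $L^\infty$; your integration by parts in $y$ (legitimate, since the $W_j$ are divergence-free for Haar measure and the resulting kernel $W_j\ccheck{k}$ is evaluated away from the origin) resolves this, and is the exact analogue of the paper's step where derivatives of $\alpha_0$ are transferred onto the smooth kernel via $\ccheck{W}^I$ when estimating $S$ in $W^{s,\infty}$. Both routes reduce all $L^\infty$-convolution issues to Lemma \ref{orsay march} and Proposition \ref{freccetta}, and both obtain \eqref{consiglio2} identically, from $W_\ell k^{(n+1)}$ being of type $1$. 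One minor cosmetic difference: the paper's $T$ depends only on the truncation radius $R$, whereas yours depends on the choice of $\chi$; this is immaterial for the statement.
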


 \begin{proof}
 
If $\alpha \in L^\infty (U', E_0^h)$, we set $\alpha_0$ to be $\alpha$  continued by zero outside $U'$, the so-called {\sl trivial extension} of $\alpha$.
 Obviously, $\alpha_0$ belongs to $L^\infty (\he n,E_0^h)$ and is compactly supported, hence belongs to $ \mc{E'}(\he n, E_0^h)$. The trivial extension defines a continuous linear map from $\mc D(U')$ to $\mc D(U)$.
 
 Denote by $k_0$  the kernel associated with $K_0$ as defined in Remark \ref{may 21}. 
 We consider a cut-off function $\psi_R$ supported in a $R$-neighborhood
of the origin, such that $\psi_R\equiv 1$ near the origin. Then we have  
$k_0=k_0\psi_R + (1-\psi_R)k_0$ Let us denote by $K_{0,R}$
 the convolution operator associated with $k_{0,R}:=\psi_R k_0$ and by $K'_{0,R}=K_0-K_{0,R}$
 the convolution operator
associated with the kernel $k'_{0,R}:= k_0 - k_{0,R}$.
 
 The kernel $\psi_R k_0\in L^1(\he n)$, so that,
by Theorem \ref{folland cont}, i), $K_{0,R}$ maps  $L^\infty$ to $L^\infty$.

Let us apply Proposition \ref{homotopy formulas L infty} using the decomposition $K_0=K_{0,R}+K'_{0,R}$: for $\phi\in\mc D(\he n,  E_0^h)$, 
\begin{equation*}\begin{split}
\Scal{\alpha_0}{\phi}_{\mc D',\mc D} 
=\Scal{d_c K_{0,R} \alpha_0 +  K_{0,R}d_c\alpha_0 +d_c K'_{0,R} \alpha_0 +  K'_{0,R}d_c\alpha_0}{\phi}_{\mc D',\mc D},
\end{split}\end{equation*}
i.e.
$$
\alpha_0 = d_c K_{0,R} \alpha_0 +  K_{0,R}d_c\alpha_0 +d_c K'_{0,R} \alpha_0 +  K'_{0,R}d_c\alpha_0
$$
in the sense of distributions. Taking the restriction to $U$, we get
\begin{equation}\label{19 dec 1 prima}
\alpha = \big(d_c {K_{0,R}\alpha_0}\big)_{\big|_{U}} + \big({K_{0,R}d_c \alpha_0}\big)_{\big|_{U}} +
\big({d_c K'_{0,R} \alpha_0 +  K'_{0,R}d_c\alpha_0}\big)_{\big|_{U}},
\end{equation}
where the restriction has to be meant as restriction of a distribution as in Proposition \ref{treves}, (5).
First we notice that, by Proposition \ref{treves}, ii) and iv), we have
$$
S\alpha:=  \big({d_c K'_{0,R} \alpha_0 +  K'_{0,R}d_c\alpha_0}\big)_{\big|_{U}} \in \mc E(U),
$$
yielding \eqref{dec 29 1}.
Since derivatives commute with restriction, if $R>0$ is small enough, we have
$$
\big(d_c {K_{0,R}\alpha_0}\big)_{\big|_{U}} =  d_c\big( {K_{0,R}\alpha_0}\big)_{\big|_{U}},
$$
and \eqref{19 dec 1 prima} reads
\begin{equation}\label{19 dec 1}
\alpha_{\big|_{U}} = d_c\big({K_{0,R}\alpha_0}\big)_{\big|_{U}} + \big({K_{0,R}d_c \alpha_0}\big)_{\big|_{U}} +
S\alpha.
\end{equation}
If now $\beta \in L^\infty(U',  E_0^\bullet) $,   we set 
$$
T\beta := {K_{0,R}\beta_0}_{\big|_{U}}.
$$
Thus, in \eqref{19 dec 1}, we have
$$
d_c\big({K_{0,R}\alpha_0}\big)_{\big|_{U}}  = d_cT\alpha.
$$
Consider now in \eqref{19 dec 1} the term $\big({K_{0,R}d_c \alpha_0}\big)_{\big|_{U}}$. We
observe preliminarily that 
\begin{equation}\label{26 dic 1}
(d_c \alpha_0)_{\big|_{U'}} = \big((d_c\alpha)_0\big)_{\big|_{U'}},
\end{equation}
where, as above, $(d_c\alpha)_0$ is the trivial extension of $d_c\alpha$.
Indeed, if $\phi\in \mc D(U',  E_0^{\bullet+1})$ then
\begin{equation*}\begin{split}
&
\Scal{d_c \alpha_0}{\phi}_{\mc D',\mc D} = \Scal{\alpha_0}{d_c^*\phi}_{\mc D',\mc D}
= \int_{U'} \scal{\alpha}{d_c^*\phi}\, dx
= \int_{U'} \scal{d_c\alpha}{\phi}\, dx
\\&\hphantom{xxx}
= \int_{\he n} \scal{(d_c\alpha)_0}{\phi}\, dx
= \Scal{(d_c \alpha)_0}{\phi}_{\mc D',\mc D}.
\end{split}\end{equation*}
This proves \eqref{26 dic 1}. Thus, we can apply Proposition \ref{treves}, (6) and we get, for $R$ small enough,
\begin{equation*}\begin{split}
\big(K_{0,R} & d_c \alpha_0\big)_{\big|_{U}} = \big( (d_c \alpha_0) \ast k_{0,R} \big)_{\big|_{U}}
\\&
 = \big((d_c \alpha)_0 \ast k_{0,R} \big)_{\big|_{U}} = \big(K_{0,R}(d_c \alpha)_0\big)_{\big|_{U}} =T(d_c \alpha).
\end{split}\end{equation*}
Eventually,  identity \eqref{19 dec 1} becomes
$$
\alpha = d_c T\alpha + Td_c \alpha + S\alpha \qquad\text{in }U.
$$
This proves the homotopy formula \eqref{homotopy dec 27}.

Since the kernel $k_{0,R}$ belongs to $L^1(\he n)$, by Hausdorff-Young inequality
(see Theorem \ref{folland cont}, i), 
$$T: L^\infty(U',  E_0^\bullet) \to L^\infty(U,  E_0^{\bullet-1}),
$$ and
this proves \eqref{consiglio}.

Let us prove the continuity estimates \eqref{smoothing eq:1A} for the operator $S$. Consider first
the term
$$
 \big(d_c (K'_{0,R} \alpha_0)  \big)_{\big|_{U}} =  \big(d_c ( \alpha_0\ast k'_{0,R}) \big)_{\big|_{U}} .
$$
If $1\le h\le 2n$, let $(\xi_1^h,\dots, \xi^h_{\mathrm{dim}\, E_0^h})$ be a basis of $E_0^h$. Then
$\alpha= \sum_j \alpha_j   \xi_j^h$ with $\alpha_j\in L^\infty(U')$, $j=1,\dots, \mathrm{dim}\, E_0^h$.
Obviously, $\alpha_0= \sum_j (\alpha_j )_0   \xi_j^h$, and $d_c ( \alpha_0\ast k'_{0,R})$ can be
written as sum of terms of the form
$$
W^I \big((\alpha_j)_0 \ast  \kappa\big) = (\alpha_j)_0 \ast W^I \kappa, 
$$
where $\kappa $ is a  smooth kernel
and $d(I)=1$ or  $d(I)=2$, according to the degree $h$.
Thus, in order to prove \eqref{smoothing eq:1A} we have 
to estimate the $L^\infty$-norms in $U$ of a sum of terms of the form
$$
 (\alpha_j)_0 \ast W^J\kappa, 
$$
with $d(J)=s+1$ or $d(J)=s+2$, according to the degree $h$.
Then the assertion follows by Proposition \ref{freccetta}, since 
the smooth kernel $W^J\kappa$ belongs to $L^1_{\mathrm{loc}}(\he n)$
(notice that $\|\alpha_j\|_{L^\infty(U)} =  \|(\alpha_j)_0\|_{L^\infty(\he n)}$).

Analogously, if we aim to estimate the term $K'_{0,R}d_c\alpha_0 = (d_c\alpha_0)\ast k'_{0,R}$,
we have to estimate in $L^\infty(U)$ a sum of terms of the form (keep in mind \eqref{convolution by parts})
\begin{equation*}\begin{split}
(W^I (\alpha_j)_0) \ast W^J\kappa =  (\alpha_j)_0) \ast \ccheck W^I\ccheck W^J\kappa,
\end{split}\end{equation*}
where $\kappa$ is a smooth kernel, $d(J)=s$ and $d(I)=1$ or $d(I)=2$ according to the degree $h$.
Since $\ccheck W^I\ccheck W^J\kappa$ is still a smooth kernel, the estimate can be carried
out as for the first term.

Thus we are left with the proof of \eqref{consiglio2}.  To this end, we notice first that on forms of degree $h = n+1$,
 the kernel of $K_{0,R}$ is obtained by truncation near the origin
of a kernel of type 2. Therefore, on forms of degree $h= n+1$  
 all the horizontal derivatives $W K_{0,R}$  belongs to $L^1$
 and, if $\alpha\in L^\infty(U',E_0^{n+1})$, then
 \begin{equation*}\begin{split}
\| WT\alpha &  \|_{L^\infty(U,E_0^{n})} = \| W(\alpha\ast k_{0,R})   \|_{L^\infty(U,E_0^{n})} 
\\&
=  \| \alpha\ast Wk_{0,R}   \|_{L^\infty(U,E_0^{n})},
\end{split}\end{equation*}
and the proof can be carried out again by Proposition \ref{freccetta}.

\end{proof}

\begin{remark}\label{smoothing remark}
Since, in $U'$, $W^{2s,p} \subset W^{s,p}_{\mathrm{Euc}}$ for $1\le p\le\infty$, then
\eqref{smoothing eq:1A} can be equivalently stated as
\begin{equation}\label{smoothing eq:1A bis}
S \in \mc L\big(L^\infty(U',E_0^\bullet) , W^{s,\infty}_{\mathrm{Euc}}(U,E_0^{\bullet-1})\big).
\end{equation}

\end{remark}

\section{Intermediate tools: interior
$\he{}$-$\mathrm{Poincar\acute{e}}_{\infty, \infty}$ and $\he{}$-$\mathrm{Sobolev}_{\infty, \infty}$
inequalities}
\label{poincare}

In \cite{IL}, starting from Cartan's homotopy formula, the authors proved  that,
if $D \subset \rn {N}$ is a convex set,
$1<p<\infty$, $1\le h \le N$, then  there exists a bounded linear map: 
\begin{equation*}
K_{\mathrm{Euc},h}  : L^p(D, {\bigwedge}\vphantom{!}^h)\to W^{1,p}_{\mathrm{Euc}}(D, {\bigwedge}\vphantom{!}^{h-1})
\end{equation*}
 that
is a homotopy operator, i.e.
\begin{equation}\label{may 4 eq:1}
	\omega =  dK_{\mathrm{Euc},h} \omega + K_{\mathrm{Euc},h+1}d\omega \qquad \mbox{for all 
	$\omega\in C^\infty (D, {\bigwedge}\vphantom{!}^h)$}.
\end{equation}
(see Proposition 4.1 and Lemma 4.2 in \cite{IL}). More precisely, $K_{\mathrm{Euc},h}$ has the form
\begin{equation}\label{10 maggio eq:1}
K_{\mathrm{Euc},h} \omega (x)= \int_D \psi(y)K_y\omega(x) \, dy,
\end{equation}
where $\psi \in \mc D(D)$, $\int_D\psi(y)\, dy=1$, and 
\begin{equation}\label{10 maggio eq:2}\begin{split}
&\Scal{K_y\omega(x)}{\xi_1\wedge\cdots
\wedge \xi_{h-1})}:=
\\&
\hphantom{xxxxxxxx}
\int_0^1t^{h-1}\Scal{\omega(y+t(x-y))}{(x-y)\wedge\xi_1\wedge\cdots
\wedge \xi_{h-1})}.
\end{split}\end{equation}
The definition \eqref{10 maggio eq:2} can be written as
$$
K_y \omega(x)=\int_{0}^{1}t^{\ell-1}\iota_{x-y}\omega(y_t)\,dt,
$$
where $y_t=y+t(x-y)$. Here, $\iota$ denotes the interior product of a differential form with a vector field,
i.e. 
$\iota: {\bigwedge}\vphantom{!}^{h+1} \to {\bigwedge}\vphantom{!}^h$ and is defined by
$$
\Scal{\iota_Y\omega}{v_1\wedge\cdots \wedge v_h} := \Scal{\omega}{Y\wedge v_1\wedge\cdots \wedge v_h}.
$$

Let us remind the following identity that follows straightforwardly from the relationship between the Lie derivative $\mc L_X$
along a vector field  $X$ of
a differential form 
and the interior product  of a vector field $Y$ and a differential form:
\begin{equation}\label{magic}
[\mc L_X, \iota_Y] = \iota_{[X,Y]}.
\end{equation}

\bigskip

The following theorem provides a  continuity result in $W^{k,p}$ of Iwaniec \& Lutoborski's kernel $K_{\mathrm{Euc}, \bullet} $,
though with a loss on domain. 

\begin{theorem}\label{chapeau}
Let $B=B(0,1)$ and $B'=B(0,2)$ be concentric Euclidean balls in $\rn N$. 
Then for $k\in \mathbb N$ and $p\in[1,\infty]$, Iwaniec-Lutoborski's homotopy $K_{\mathrm{Euc},h}$ is a bounded operator
\begin{equation*}
K_{\mathrm{Euc},h}  : W^{k,p}_{\mathrm{Euc}}(B', {\bigwedge}\vphantom{!}^\bullet )\to W^{k,p}_{\mathrm{Euc}}(B, {\bigwedge}\vphantom{!}^{\bullet-1})
\end{equation*}

\end{theorem}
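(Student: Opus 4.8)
The plan is to prove the asserted boundedness directly from the explicit formula \eqref{10 maggio eq:1}--\eqref{10 maggio eq:2}, uniformly in the degree $h$ and in the exponent $p\in[1,\infty]$, in two steps. First I would reduce $K_{\mathrm{Euc},h}$ to a genuine (non-convolution) integral operator whose kernel is dominated by a weakly singular, locally integrable function of $x-w$, so that a Schur-type estimate (which is insensitive to $p$) yields $L^p(B')\to L^p(B)$ boundedness for \emph{every} $p\in[1,\infty]$; this settles the case $k=0$. Then, for $k\geq1$, I would differentiate using a commutation identity that moves at most one derivative per step onto $\omega$ while keeping all surviving operators in the same class, and iterate $k$ times.

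For $k=0$: in \eqref{10 maggio eq:2} write $y_t=y+t(x-y)=x+(1-t)(y-x)$, substitute $s=1-t$, and then, for fixed $s\in(0,1)$, substitute $w=x+s(y-x)$ (so $y=x+(w-x)/s$, $dy=s^{-N}\,dw$ and $\iota_{y-x}=s^{-1}\iota_{w-x}$). One obtains
\[
K_{\mathrm{Euc},h}\omega(x)=\int_{B'}\mc K_h(x,w)\,\iota_{w-x}\omega(w)\,dw,\qquad
\mc K_h(x,w)=-\int_0^1 (1-s)^{h-1}\,s^{-N-1}\,\psi\!\Big(x+\tfrac{w-x}{s}\Big)\,ds,
\]
where $\psi\in\mc D(\R^N)$, $\int\psi=1$, and we are free to take $\mathrm{supp}\,\psi\Subset B$. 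Since $x+(w-x)/s\in\mathrm{supp}\,\psi\Subset B$ forces $w$ onto a segment joining $x$ to a point of $B$ and forces $s\gtrsim|w-x|$, one gets $\mc K_h(x,w)=0$ for $x\in B$, $w\notin B$, and $|\mc K_h(x,w)|\le C\,|w-x|^{-N}$ otherwise, hence $|\mc K_h(x,w)\,\iota_{w-x}\omega(w)|\le C\,|w-x|^{-(N-1)}|\omega(w)|$ pointwise (logarithmic factor if $N=1$). As $z\mapsto|z|^{-(N-1)}\chi_{\{|z|\le C\}}$ lies in $L^1(\R^N)$, Schur's test (the generalized Young inequality) bounds the operator on $L^p$ for all $p\in[1,\infty]$ with a constant depending only on $N$ and $\psi$; this is the $k=0$ case, uniformly in $h$.

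For $k\geq1$ I would argue by induction on $k$. Differentiating \eqref{10 maggio eq:2} and using $\partial_{x_j}y_t=t\,e_j$ together with $\partial_{x_j}\!\circ\iota_{x-y}=\iota_{e_j}+\iota_{x-y}\circ\partial_{x_j}$ (equivalently $[\mc L_{e_j},\iota_{x-y}]=\iota_{[e_j,\,x-y]}=\iota_{e_j}$, cf.\ \eqref{magic}) gives $\partial_{x_j}\big(\iota_{x-y}\omega(y_t)\big)=\iota_{e_j}\omega(y_t)+t\,\iota_{x-y}(\partial_j\omega)(y_t)$, whence
\[
\partial_{x_j}\big(K_{\mathrm{Euc},h}\omega\big)=A_j\omega+B_h(\partial_j\omega),
\]
with $A_j\omega(x)=\int\psi(y)\int_0^1 t^{h-1}\iota_{e_j}\omega(y_t)\,dt\,dy$ and $B_h\sigma(x)=\int\psi(y)\int_0^1 t^{h}\iota_{x-y}\sigma(y_t)\,dt\,dy$. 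The same change of variables represents $A_j$ and $B_h$ as integral operators whose scalar kernels are again dominated by $|x-w|^{-(N-1)}\chi_{\{|x-w|\le C\}}$ (for $A_j$ the missing $\iota_{w-x}$ is compensated by one more power of $s$; for $B_h$ the extra $t^h$ only helps), hence bounded on every $L^p$ by the $k=0$ argument. Moreover $A_j$ and $B_h$ have exactly the structural form ``$\psi(y)$ paired against $\int_0^1 t^m\,(\text{fixed contraction})\,(\cdot)(y_t)\,dt$'', so the same differentiation rule applies to them in turn, introducing only harmless factors (powers of $t$, contractions with constant vectors, derivatives of the fixed $\psi$), and the $s$-singularity never gets worse than $s^{-N-1}$. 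Iterating, for $|\alpha|=k$ one gets $\partial^\alpha(K_{\mathrm{Euc},h}\omega)=\sum_{|\beta|\le k}T_{\alpha,\beta}(\partial^\beta\omega)$ with each $T_{\alpha,\beta}$ of weakly singular type, hence bounded $L^p(B')\to L^p(B)$ for all $p\in[1,\infty]$ with constant $C(N,k,\psi)$ uniform in $h$. Summing over $|\alpha|\le k$ yields the claim; at the endpoints $p=1,\infty$, where smooth forms are not dense, one first establishes the identities for smooth $\omega$ and then notes that the integral representations of the $T_{\alpha,\beta}$, together with their pointwise kernel bounds, extend everything to $W^{k,p}_{\mathrm{Euc}}$.

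The hard part is exactly this differentiation step: carried out naively on the post-change-of-variables kernel $\mc K_h$, $\partial_{x_j}$ would fall on $\psi(x+(w-x)/s)$ and degrade the kernel to homogeneity $-N$ (or worse upon iteration), which is no longer locally integrable; one must instead differentiate in the pre-change-of-variables formula \eqref{10 maggio eq:2}, where each $\partial_{x_j}$ either lands on $y_t$ --- producing a bona fide derivative of $\omega$ with a factor $t$ that preserves integrability of the remaining kernel --- or on the linear contraction $\iota_{x-y}$ --- producing $\iota_{e_j}$, which raises the kernel's homogeneity by one after the change of variables. Checking that after $k$ differentiations every term still has a kernel dominated by $|x-w|^{-(N-1)}$ on a bounded set and carries at most $k$ derivatives of $\omega$ is the real content; the rest is Schur's inequality and elementary multilinear algebra. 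The loss on domain ($B\Subset B'$) plays essentially no role in the Euclidean case --- with $\mathrm{supp}\,\psi\Subset B$ every segment $[x,y]$ occurring above lies in $B$ --- and is retained only for uniformity with the Heisenberg statements of the paper.
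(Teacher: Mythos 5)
Your proposal is correct and follows essentially the same route as the paper: differentiate inside the integral \emph{before} the change of variables, use the commutation identity \eqref{magic} to split $D^k(\iota_{x-y}\omega(y_t))$ into a term carrying $D^k\omega$ with a factor $t^k$ and lower-order terms contracted with constant vectors, then change variables $w=y_t$ to exhibit weakly singular kernels bounded by $C|x-w|^{1-N}$ and conclude with Young/Schur, uniformly in $p\in[1,\infty]$. The only differences are organizational (you induct one derivative at a time where the paper iterates \eqref{magic} in one stroke and writes the result as two convolutions $M_1\ast D^k\omega+M_2\ast RD^{k-1}\omega$), and you correctly flag the same pitfall the paper's computation is designed to avoid, namely that differentiating the post-change-of-variables kernel would destroy local integrability.
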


\begin{proof}
For the sake of simplicity, from now on we omit the degree $h$ of the form and we write simply $ K_{\mathrm{Euc}}$.
We show that for every $k$-th order partial derivative $D^k$  there exist matrix valued kernels $M_1$ and $M_2$ on the ball of radius $2$ such that for every differential form $\omega$ on the unit ball, 
$$
D^k K_{\mathrm{Euc}}\omega=M_1\ast(D^k\omega)+M_2\ast(RD^{k-1}\omega),
$$
where $RD^{k-1}$ is a constant coefficient $(k-1)$-order differential operator and for all $h\in \R^n$, $|h|<2$, $i=1,2$,
$$
|M_i(h)|\leq C\,|h|^{1-N}.
$$

We set  $y_t=y+t(x-y)$. Iterating \eqref{magic}, we obtain
$$
D^k (\iota_{x-y}\omega(y_t))=t^k \iota_{x-y}D^k\omega(y_t)+t^{k-1}RD^{k-1}\omega(y_t),
$$
where $RD^{k-1}$ denotes the following $(k-1)$-order differential operator from $\ell$-forms to $\ell-1$ forms. 
If, for sake of simplicity, we take $D^k$ of the form
 $D^k=D_1\cdots D_k$,
$$
RD^{k-1}\omega=\sum_{i=1}^{k} \iota_{D_i}(D_1\cdots D_{i-1}D_{i+1}\cdots D_k\omega).
$$
Therefore
\begin{align*}
D^k K_{\mathrm{Euc}} \omega(x)
&=\int_{0}^{1}t^{\ell-1}\int_{B}\phi(y)D^k (\iota_{x-y}\omega(y_t))\,dy\,dt\\
&=\int_{B}\int_{0}^{1}t^{\ell-1}\phi(y)(t^k \iota_{x-y}D^k\omega(y_t)+t^{k-1}RD^{k-1}\omega(y_t))\,dy\,dt.
\end{align*}
Let us perform a change of variables $z=y_t$ and denote by $h=x-z$. T
hen $y=\frac{1}{1-t}z-\frac{t}{1-t}x=z-\frac{t}{1-t}h$, $x-y=(1-t)^{-1}h$, $dy=(1-t)^{-n}dz$, whence
\begin{align*}
D^k K_{\mathrm{Euc}}\omega(x)
&=\int_{B}\int_{0}^{1}t^{\ell-1}\phi(z-sh)(t^{k}(1-t)^{-1} \iota_{h}D^k\omega(z)\\
&+t^{k-1}RD^{k-1}\omega(z))\,(1-t)^{-N}\,dt\,dz.
\end{align*}
We treat both terms separately. The first one is
\begin{align*}
\int_{B}\int_{0}^{1}&t^{\ell-1}\phi(z-\frac{t}{1-t}h)t^{k}(1-t)^{-1} \iota_{h}D^k\omega(z)\,(1-t)^{-N}\,dt\,dz\\
&=\int_{B}\int_{0}^{1}t^{k+\ell-1}(1-t)^{-N-1}\phi(z-\frac{t}{1-t}h)\iota_{h}D^k\omega(z)\,dt\,dz\\
&=\int_{B}\langle \int_{0}^{\infty}(\frac{s}{1+s})^{k+\ell-1}(1+s)^{N-1}\phi(z-sh)\iota_{h}\,ds,D^k\omega(z)\rangle\,dz,
\end{align*}
where we have made the change of variables $s=\frac{t}{1-t}$. 
One recognizes the convolution of the $\Lambda^\ell$-valued function $D^k\omega$ with the matrix valued kernel
$$
M_1(z,h):=\int_{0}^{\infty}(\frac{s}{1+s})^{k+\ell-1}(1+s)^{N-1}\phi(z-sh)\iota_{h}\,ds.
$$
The second term is
\begin{align*}
\int_{B}\int_{0}^{1}&t^{\ell-1}\phi(z-\frac{t}{1-t}h)t^{k-1}RD^{k-1}\omega(z)\,(1-t)^{-N}\,dt\,dz\\
&=\int_{B}\int_{0}^{1}t^{k+\ell-2}(1-t)^{-N}\phi(z-\frac{t}{1-t}h)RD^{k-1}\omega(z)\,dt\,dz\\
&=\int_{B}\langle \int_{0}^{\infty}(\frac{s}{1+s})^{k+\ell-2}(1+s)^{N}\phi(z-sh)\,ds,RD^{k-1}\omega(z)\rangle\,dz.
\end{align*}
Again, this is the convolution of the $\Lambda^{\ell-1}$-valued function $SW^{k-1}\omega$ with the scalar kernel
$$
M_2(z,h):=\int_{0}^{\infty}(\frac{s}{1+s})^{k+\ell-2}(1+s)^{N-2}\phi(z-sh)\,ds.
$$
Since $\phi$ has compact support in $B$, in both cases, the integral stops no later that $2|h|^{-1}$, thus
\begin{align*}
|M_1(z,h)|&\leq C\,|h|\int_{0}^{2|h|^{-1}}(1+s)^{N-1}\,ds\leq C\,|h|^{1-N},\\
|M_2(z,h)|&\leq C\,\int_{0}^{2|h|^{-1}}(1+s)^{N-2}\,ds\leq C\,|h|^{1-N}.
\end{align*}
With Young's inequality, this implies that for all $p\in[1,\infty]$,
$$
\|D^k K_{\mathrm{Euc}}\omega\|_{L^p(B, {\bigwedge}\vphantom{!}^{\bullet-1})}
\leq C\,\left(\|\nabla^{k}\omega\|_{L^p(B', {\bigwedge}\vphantom{!}^{\bullet})}+\|\nabla^{k-1}\omega\|_{L^p(B', {\bigwedge}\vphantom{!}^{\bullet})}\right).
$$
Since this holds for every $k$-th order partial derivative,
$$
\|K_{\mathrm{Euc}}\omega\|_{W^{k,p}_{\mathrm{Euc}}(B, {\bigwedge}\vphantom{!}^{\bullet-1} )}
\leq 
C\,\|\omega\|_{W^{k,p}_{\mathrm{Euc}}(B', {\bigwedge}\vphantom{!}^\bullet )}.
$$
\end{proof}

Starting from \cite{IL}, in \cite{mitrea_mitrea_monniaux}, Theorem 4.1,  the authors define a compact homotopy operator $J_{\mathrm{Euc},h}$ in Lipschitz star-shaped  domains in Euclidean space $\rn {N}$, providing an explicit representation formula
for $J_{\mathrm{Euc},h}$, together with continuity properties among Sobolev spaces. More precisely:
\begin{theorem}\label{MMMTh}[(see \cite{mitrea_mitrea_monniaux}, formula (167))] if $D\subset \rn {N}$ is a star-shaped Lipschitz domain and {$1\le h\le N$}, then  there exists
$$
J_{\mathrm{Euc},h} : L^{p}(D, {\bigwedge}\vphantom{!}^h) \to W^{1,p}_{0, \mathrm{Euc}}(D, {\bigwedge}\vphantom{!}^{h-1})
$$
such that 
$$
\omega = dJ_{\mathrm{Euc},h}\omega + J_{\mathrm{Euc},h+1}d\omega \qquad \mbox{for all $\omega\in 
\mc D(D, {\bigwedge}\vphantom{!}^h)$}
$$
and for $1<p<\infty$ and $k\in \mathbb N\cup\{0\}$
$$
J_{\mathrm{Euc},h} : W^{k,p}_{0,\mathrm{Euc}}(D, {\bigwedge}\vphantom{!}^h) \to W^{k+1,p}_{0,\mathrm{Euc}}(D, {\bigwedge}\vphantom{!}^{h-1}).
$$

Furthermore, $J_{\mathrm{Euc},h}$ maps smooth compactly supported forms to smooth compactly supported forms.
\end{theorem}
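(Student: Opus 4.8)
The plan is to realize $J_{\mathrm{Euc},h}$ as the \emph{Bogovskii homotopy operator for differential forms}, i.e.\ as an explicit integral operator whose kernel is a form-valued analogue of Bogovskii's classical kernel for the divergence equation. First I would reduce to the model situation in which $D$ is star-shaped with respect to a ball $B_0=B(x_0,\varepsilon)\Subset D$ --- automatic for bounded Lipschitz star-shaped domains --- and fix $\theta\in\mc D(B_0)$ with $\int_{B_0}\theta=1$. Then $J_{\mathrm{Euc},h}$ is defined by
$$
J_{\mathrm{Euc},h}\omega(x)=\int_{D}\Big(\int_1^\infty \theta\big(y+s(x-y)\big)\,s^{N-1}\,ds\Big)\,\iota_{x-y}\,\omega(y)\,dy
$$
(write $G(x,y)$ for the inner $s$-integral; in degrees $h<N$ the weight $s^{N-1}$ acquires a polynomial correction dictated by the cone construction). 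This is the Iwaniec--Lutoborski cone operator of \eqref{10 maggio eq:1}--\eqref{10 maggio eq:2} with the radial integration \emph{reversed}: instead of averaging over segments joining an interior point to $x$ --- which spreads the support of $K_{\mathrm{Euc},h}\omega$ to $\partial D$ --- one integrates $\theta$ along the ray issued from $x$ in the direction $x-y$.

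The compact-support property is then geometric: if $\supp\omega\subset\Omega\Subset D$ and $J_{\mathrm{Euc},h}\omega(x)\neq 0$, there exist $y\in\Omega$ and $s\ge 1$ with $b:=(1-s)y+sx\in B_0$, hence $x=\tfrac1s\,b+(1-\tfrac1s)y\in[y,b]$; therefore $\supp J_{\mathrm{Euc},h}\omega\subset\bigcup_{y\in\Omega,\,b\in B_0}[y,b]$, which is a compact subset of $D$ by star-shapedness with respect to $B_0$, and the same bound applies to $J_{\mathrm{Euc},h+1}d\omega$. Next I would establish the homotopy identity $\omega=dJ_{\mathrm{Euc},h}\omega+J_{\mathrm{Euc},h+1}d\omega$ on $\mc D(D,\bigwedge^h)$ by a direct computation: differentiating $J_{\mathrm{Euc},h}\omega$, moving $d$ through $\iota_{x-y}$ by Cartan's formula and the commutation relation \eqref{magic} (just as in the proof of Theorem \ref{chapeau}), and changing variables radially, one finds that $d\iota_{x-y}+\iota_{x-y}d$ collapses into a total $s$-derivative; integrating from $s=1$ to $s=\infty$ and using $\int\theta=1$ --- the term ``at $s=\infty$'' vanishing because $\omega$ is compactly supported --- returns exactly $\omega(x)$. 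This is the form-valued counterpart of Bogovskii's identity $\operatorname{div}(\mathbf{B}f)=f-\big(\int_D f\big)\theta$, with no correction term because $H^h_{\mathrm{dR}}(D)=0$ for $1\le h\le N$ on a star-shaped $D$.

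For the continuity statements one analyses the kernel. After the substitution $b=y+s(x-y)$, exactly as for the kernels $M_1,M_2$ in the proof of Theorem \ref{chapeau}, the kernel $(x,y)\mapsto G(x,y)\,\iota_{x-y}$ of $J_{\mathrm{Euc},h}$ has size $\le C\,|x-y|^{1-N}$, while its first-order $x$-derivatives are of Calder\'on--Zygmund type, $\le C\,|x-y|^{-N}$ with the usual cancellation. Young's inequality then gives boundedness of $J_{\mathrm{Euc},h}$ on $L^p$, and Calder\'on--Zygmund theory (which forces $1<p<\infty$) gives $\nabla_x(J_{\mathrm{Euc},h}\omega)\in L^p$ controlled by $\|\omega\|_{L^p}$, so $J_{\mathrm{Euc},h}\colon L^p\to W^{1,p}_{0,\mathrm{Euc}}$ (the support statement having been proved above); smoothness of $J_{\mathrm{Euc},h}\omega$ for smooth $\omega$ follows along the same lines. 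For $k\ge 1$ one argues again as in the proof of Theorem \ref{chapeau}, trading $x$-derivatives of the kernel for $y$-derivatives of $\omega$ by integration by parts (licit since $\omega$ is compactly supported), thereby writing $D^k(J_{\mathrm{Euc},h}\omega)$ as a Calder\'on--Zygmund operator applied to $D^k\omega$ plus order-$(1-N)$ kernels applied to lower-order derivatives of $\omega$; this yields $J_{\mathrm{Euc},h}\colon W^{k,p}_{0,\mathrm{Euc}}\to W^{k+1,p}_{0,\mathrm{Euc}}$ for $1<p<\infty$ and $k\in\mathbb N\cup\{0\}$.

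The step I expect to be the main obstacle is reconciling the two competing demands: the naturally exact operator --- the cone contracting $D$ to a point --- spreads supports to $\partial D$, whereas the support-preserving reversal of the radial integration makes both the verification of the homotopy identity and the sharp Calder\'on--Zygmund estimates for the derivatives of $G$ considerably more delicate; this bookkeeping, together with the endpoint mapping properties, is the technical core of \cite{mitrea_mitrea_monniaux}.
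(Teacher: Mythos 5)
Your proposal is correct and reconstructs exactly the argument behind this statement: the paper offers no proof of its own here but simply cites Mitrea--Mitrea--Monniaux, formula (167), and the operator defined there is precisely the Bogovskii-type reversal of the Iwaniec--Lutoborski cone operator that you describe, with the same support argument, the same collapse of $d\iota_{x-y}+\iota_{x-y}d$ into a total radial derivative, and the same weakly-singular-plus-Calder\'on--Zygmund kernel analysis giving the $W^{k,p}_{0,\mathrm{Euc}}\to W^{k+1,p}_{0,\mathrm{Euc}}$ bounds for $1<p<\infty$. No gaps; your sketch matches the cited source's construction.
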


 We need now construct a homotopy operator, fitting the intrinsic group structure, that can invert Rumin's differential $d_c$. To this aim take
 $D=B(e,1)=:B$ and $N=2n+1$. If $\omega\in C^\infty(B,E_0^h)$, then we set
\begin{eqnarray}\label{may 4 eq:2}
K=\Pi_{E_0}\circ \Pi_E \circ K_{\mathrm{Euc}}   \circ \Pi_E
\end{eqnarray}
(for the sake of simplicity, from now on we drop the index $h$ - the degree of the form -
writing, e.g., $K_{\mathrm{Euc}}$ instead of $K_{\mathrm{Euc},h}$).

Analogously, we can define
\begin{eqnarray}\label{may 31 eq:2}
J=\Pi_{E_0}\circ \Pi_E \circ J_{\mathrm{Euc}}   \circ \Pi_E.
\end{eqnarray}

Then $K$ and $J$ invert Rumin's differential $d_c$ on closed forms of the same degree. More
precisely, we have:

\begin{lemma}\label{homotopy 1} If $\omega$ is a smooth $d_c$-exact differential form, then
\begin{equation}\label{homotopy closed}
\omega = d_cK\omega \quad\mbox{if $1\le h\le 2n+1$}\qquad\mbox{and}\qquad   
\omega = d_cJ\omega \quad\mbox{if $1\le h\le 2n+1$.}
\end{equation}
In addition, if $\omega$ is compactly supported in $B$, then $J\omega$ is still compactly supported in $B$.
\end{lemma}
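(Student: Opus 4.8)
The plan is to establish the homotopy identity $\omega = d_cK\omega$ for smooth $d_c$-exact forms by reducing to the Euclidean homotopy formula \eqref{may 4 eq:1} via the chain-map properties of $\Pi_E$ and $\Pi_{E_0}$ recorded in Section \ref{rumin complex}. First I would recall that $d\Pi_E = \Pi_E d$ (so $\Pi_E$ is a chain map between the de Rham complex and the subcomplex $(E,d)$), that $\Pi_{E_0}$ is an algebraic (zeroth-order) projection, that $\Pi_{E_0}\Pi_E\Pi_{E_0} = \Pi_{E_0}$ and $\Pi_E\Pi_{E_0}\Pi_E = \Pi_E$, and that $d_c = \Pi_{E_0}d\Pi_E\Pi_{E_0}$. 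I would also note that if $\omega$ is a smooth section of $E_0^h$, then $\Pi_E\omega$ is a smooth differential $h$-form in the ordinary sense, so Iwaniec--Lutoborski's operator $K_{\mathrm{Euc}}$ applies to it and \eqref{may 4 eq:1} holds.

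The computation then runs as follows. Write $\omega = d_c\phi$ for some smooth $\phi\in E_0^{h-1}$, so in particular $\omega = \Pi_{E_0}\omega$ and, because Rumin's complex is homotopic to de Rham's, $\Pi_E\omega$ is a $d$-exact (hence $d$-closed) smooth form. Apply the Euclidean homotopy formula to $\eta := \Pi_E\omega$:
\begin{equation*}
\eta = dK_{\mathrm{Euc}}\eta + K_{\mathrm{Euc}}(d\eta) = dK_{\mathrm{Euc}}\eta,
\end{equation*}
since $d\eta = d\Pi_E\omega = \Pi_E d_c$-closedness forces $d\eta=0$ (here one uses $d\Pi_E = \Pi_E d$ together with the fact that $\omega$ being $d_c$-closed means $\Pi_E\omega$ is $d$-closed, a standard property of the Rumin construction). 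Now apply $\Pi_{E_0}\Pi_E$ to both sides and use that $\Pi_E$ is a chain map:
\begin{equation*}
\Pi_{E_0}\Pi_E\eta = \Pi_{E_0}\Pi_E dK_{\mathrm{Euc}}\eta = \Pi_{E_0}d\Pi_E K_{\mathrm{Euc}}\eta.
\end{equation*}
The left-hand side equals $\Pi_{E_0}\Pi_E\Pi_E\omega = \Pi_{E_0}\Pi_E\omega$; then inserting $\Pi_{E_0}\omega = \omega$ and using $\Pi_{E_0}\Pi_E\Pi_{E_0} = \Pi_{E_0}$ gives $\Pi_{E_0}\Pi_E\omega = \Pi_{E_0}\Pi_E\Pi_{E_0}\omega = \Pi_{E_0}\omega = \omega$. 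For the right-hand side, I would interpose $\Pi_{E_0}$: since $\Pi_E\Pi_{E_0}\Pi_E = \Pi_E$, one can write $\Pi_E K_{\mathrm{Euc}}\eta = \Pi_E K_{\mathrm{Euc}}\Pi_E\omega$ and massage the expression so that $d$ is flanked by $\Pi_E\Pi_{E_0}$ on the left and $\Pi_{E_0}$ on the right, reproducing exactly $d_c = \Pi_{E_0}d\Pi_E\Pi_{E_0}$ applied to $K\omega = \Pi_{E_0}\Pi_E K_{\mathrm{Euc}}\Pi_E\omega$. This yields $\omega = d_cK\omega$. The identical argument with $J_{\mathrm{Euc}}$ in place of $K_{\mathrm{Euc}}$, using Theorem \ref{MMMTh}, gives $\omega = d_cJ\omega$; and since $J_{\mathrm{Euc}}$ maps smooth compactly supported forms to smooth compactly supported forms (and $\Pi_E$, $\Pi_{E_0}$ are local differential/algebraic operators that do not enlarge support), $J\omega$ stays compactly supported in $B$ when $\omega$ is.

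The main obstacle I anticipate is bookkeeping the projections correctly: one must verify that $\Pi_E\omega$ is genuinely $d$-closed when $\omega$ is $d_c$-closed (this requires the precise relation between $d_c$, $d$, and $\Pi_E$ from the Rumin construction — specifically that $\Pi_E$ maps $E_0^\bullet$-sections to $E^\bullet$-sections compatibly with the differentials, as recorded in Theorem \ref{main rumin new}), and that the various sandwichings of $\Pi_E$ and $\Pi_{E_0}$ collapse to $d_c$ exactly as in the formula $d_c = \Pi_{E_0}d\Pi_E\Pi_{E_0}$. No analytic estimates are needed here — only the algebraic identities among $\Pi_E$, $\Pi_{E_0}$, $d$, $d_c$ and the Euclidean homotopy formulas — so the proof is short once these identities are marshalled in the right order. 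The support statement for $J$ is then immediate from locality of all operators involved.
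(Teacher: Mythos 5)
Your argument is correct and is essentially the proof the paper defers to ([BFP2], Lemma 5.7): write $\omega=d_c\phi$, observe that $\Pi_E\omega=d\Pi_E\phi$ is $d$-closed, apply the Euclidean homotopy formula to $\eta=\Pi_E\omega$, and collapse the projections via $d\Pi_E=\Pi_E d$, $\Pi_E\Pi_{E_0}\Pi_E=\Pi_E$ and $\Pi_{E_0}\Pi_E\Pi_{E_0}=\Pi_{E_0}$ to recover $\omega=d_cK\omega$, with the support claim for $J$ following from Theorem \ref{MMMTh} and the locality of $\Pi_E$, $\Pi_{E_0}$. (The sentence justifying $d\eta=0$ is garbled as written, but the correct justification is present in your surrounding discussion.)
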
 
For the proof of the lemma above we refer to Lemma 5.7 in \cite{BFP2}.

Imitating \cite{BFP2}, we are now able to prove
interior Poincar\'e inequality and Sobolev inequality for {Rumin} forms in the sense of
Definitions \ref{poincare def} and \ref{equiv Sobolev}.

\begin{theorem}\label{pq poincare} Take $\lambda >1$ and set  $B=B(e,1)$ and $B_\lambda=B(e,\lambda)$.
If $1\le h \le 2n+1$ then
\begin{itemize}
\item[i)] an interior $\he{}$-$\mathrm{Poincar\acute{e}}_{\infty ,\infty}(h)$ inequality holds with respect to the balls $B$ and $B_\lambda$;

\item[ii)] in addition, an interior
$\he{}$-$\mathrm{Sobolev}_{\infty,\infty}(h)$ inequality holds for $1\le h\le {2n+1}$.
\end{itemize}
\end{theorem}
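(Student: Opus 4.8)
The plan is to imitate the corresponding argument in \cite{BFP2}, using the intrinsic homotopy operators $K$ and $J$ constructed in \eqref{may 4 eq:2} and \eqref{may 31 eq:2}. For part i), given a $d_c$-exact form $\omega\in L^\infty(B_\lambda, E_0^h)$, we first want to reduce to the smooth case. Since $L^\infty$ precludes density arguments, instead of approximating $\omega$ we apply the approximate homotopy formula of Proposition \ref{smoothing-A}: on a slightly smaller ball we may write $\alpha=d_cT\alpha+Td_c\alpha+S\alpha$, where $S\alpha$ is smooth with controlled $W^{s,\infty}$-norm, $T$ is bounded $L^\infty\to L^\infty$, and (because $\omega$ is $d_c$-exact, hence $d_c$-closed) the term $Td_c\alpha$ drops out. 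So $\omega=d_cT\omega+S\omega$ on the intermediate ball, reducing matters to finding an $L^\infty$ primitive of the smooth form $S\omega$, which is again $d_c$-exact.

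The second and main step is to estimate the intrinsic homotopy operator $K$ on the smooth remainder. By definition $K=\Pi_{E_0}\Pi_E K_{\mathrm{Euc}}\Pi_E$; the projections $\Pi_E,\Pi_{E_0}$ are differential operators of order $\le 1$ in the horizontal derivatives (order $\le 1$ in the Euclidean sense suffices here), so composing with Theorem \ref{chapeau} — which gives boundedness of $K_{\mathrm{Euc}}$ on $W^{k,p}_{\mathrm{Euc}}$ for all $p\in[1,\infty]$, in particular $p=\infty$, at the cost of passing from $B'$ to $B$ — we obtain that $K$ maps $W^{s,\infty}_{\mathrm{Euc}}(B', E_0^h)$ into $W^{s-c,\infty}_{\mathrm{Euc}}(B, E_0^{h-1})$ for a fixed loss $c$ (coming from the two applications of $\Pi_E$ and one of $\Pi_{E_0}$), with a loss on the domain. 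Choosing $s$ large enough and invoking the embedding $W^{s,\infty}\hookrightarrow L^\infty$ (trivially, or via Theorem \ref{donne} if one prefers the Folland–Stein scale, cf. Remark \ref{smoothing remark}), we conclude $\|K(S\omega)\|_{L^\infty(B)}\le C\|S\omega\|_{W^{s,\infty}_{\mathrm{Euc}}(B')}\le C\|\omega\|_{L^\infty(B_\lambda)}$. By Lemma \ref{homotopy 1}, $d_c K(S\omega)=S\omega$ on $B$ since $S\omega$ is smooth and $d_c$-exact there. Then $\phi:=T\omega+K(S\omega)$ satisfies $d_c\phi=\omega$ on $B$ and $\|\phi\|_{L^\infty(B)}\le C\|\omega\|_{L^\infty(B_\lambda)}$, which is the interior $\he{}$-$\mathrm{Poincar\acute e}_{\infty,\infty}(h)$ inequality after relabelling the balls; the nesting $B\subset B'\subset B_\lambda$ of concentric balls is arranged by choosing $\lambda$ accordingly.

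For part ii), the interior $\he{}$-$\mathrm{Sobolev}_{\infty,\infty}(h)$ inequality, the scheme is the same but we must preserve compact support. Here we use the Mitrea–Mitrea–Monniaux operator $J_{\mathrm{Euc}}$ instead of $K_{\mathrm{Euc}}$: by Theorem \ref{MMMTh} it maps $W^{k,p}_{0,\mathrm{Euc}}$ to $W^{k+1,p}_{0,\mathrm{Euc}}$ and sends smooth compactly supported forms to smooth compactly supported forms, so the intrinsic $J=\Pi_{E_0}\Pi_E J_{\mathrm{Euc}}\Pi_E$ inherits the compact-support property together with a $W^{s,\infty}\to W^{s-c,\infty}$ bound on the relevant balls. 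One starts with $\omega$ compactly supported in $B$, extends it trivially, applies the (compact-support version of the) homotopy/smoothing decomposition so that the smooth remainder is compactly supported in a slightly larger ball, applies $J$ there, and assembles $\phi$ with support in $B_\lambda$. (Note that Theorem \ref{MMMTh} is stated for $1<p<\infty$ on the $W^{k+1,p}$ mapping; for the $L^\infty$ endpoint one either argues as in Theorem \ref{chapeau} directly with the explicit kernel representation of $J_{\mathrm{Euc}}$, obtaining kernels bounded by $C|h|^{1-N}$ and applying Young's inequality with $p=\infty$, or uses the $W^{s,\infty}\hookrightarrow L^\infty$ embedding after an integer number of steps.)

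The main obstacle I expect is bookkeeping the orders and the domain losses: each of $\Pi_E$, $\Pi_{E_0}$ costs derivatives (and in degree $n$, where $d_c$ is second order, $\Pi_E-\mathrm{Id}$ raises weight, so one must be careful that $d_c K\omega=\omega$ still holds — this is exactly Lemma \ref{homotopy 1}, which already handles it), and the smoothing operator $S$ of Proposition \ref{smoothing-A} produces a form that is smooth but only controlled in $W^{s,\infty}$ for a prescribed $s$, so one must fix $s$ large enough at the outset to absorb all the losses and land in $L^\infty$ at the end. The non-density of smooth functions in $L^\infty$ is what forces this detour through Proposition \ref{smoothing-A} rather than a direct approximation argument, but once that reduction is in place the estimates are routine, relying on Theorem \ref{chapeau} (respectively the kernel bounds for $J_{\mathrm{Euc}}$) and Proposition \ref{freccetta}.
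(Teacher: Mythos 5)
Your proposal is correct and follows essentially the same route as the paper: the reduction $\omega=d_cT\omega+S\omega$ via Proposition \ref{smoothing-A}, the application of $K$ (resp.\ $J$ for the compactly supported case) to the smooth remainder, and the norm estimates via Theorem \ref{chapeau} (resp.\ Theorem \ref{MMMTh}) combined with the smoothing bound \eqref{smoothing eq:1A}. The only cosmetic difference is that the paper routes the final $L^\infty$ bound through $W^{1,q}_{\mathrm{Euc}}$ with $q>2n+1$ and Morrey's embedding rather than working on the $W^{k,\infty}_{\mathrm{Euc}}$ scale throughout, which changes nothing of substance.
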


\begin{proof} Consider the balls $B:=B(e,1) \Subset B(e,\lambda/2) \Subset B(e,\lambda)=:B_\lambda$, so that Proposition \ref{smoothing-A}
and Theorem \ref{chapeau}
can be applied to the couple $B(e,1), B(e,\lambda/2)$ and can be applied also to the couple $B(e,\lambda/2),  B(e,\lambda)$. Put $B_1:=B(e,\lambda/2)$.

\medskip

\noindent  i) Interior $\he{}$-$\mathrm{Poincar\acute{e}}_{\infty,\infty}(h)$ inequality: let $\omega \in L^\infty(B_\lambda,E_0^h)$ be $d_c$-closed.
By  \eqref{homotopy dec 27}, if we take therein $U:= B$ and $U':= B_\lambda$,  we can write
 \begin{equation}\label{9 nov}
\omega = d_cT\omega + S\omega \qquad\mbox{in $B$.}
\end{equation}
  By 
\eqref{dec 29 1} $S\omega\in 
\mc C^\infty(B,E_0^h)$ and $d_cS\omega =0$ since $d_c\omega = d_c^2T\omega + d_cS\omega$ in $B$
and $d_c\omega=0$ (by assumption).

 Thus we can apply \eqref{homotopy closed} to $S\omega$
and we get $S\omega = d_cKS\omega$, where $K$ is defined in \eqref{may 4 eq:2}.
In $B$, put now 
$$
\phi:= (KS+T)\omega.
$$ 
Trivially, 
\begin{equation}\label{dec 29 6}
d_c\phi = d_cKS\omega + d_cT\omega = S\omega +
d_cT\omega = \omega,
\end{equation}
 by \eqref{9 nov}.  

On the other hand,
\begin{equation}\label{dec 29 2}
\|\phi  \|_{L^\infty(B,E_0^{h-1})}  \le \|KS\omega\|_{L^\infty(B,E_0^{h-1})}+ \| T\omega\|_{L^\infty(B,E_0^{h-1})} .
\end{equation}
First of all, by \eqref{consiglio},
\begin{equation}\label{dec 29 3}
\| T\omega\|_{L^\infty(B,E_0^{h-1})} \le C \| \omega\|_{L^\infty(B,E_0^{h-1})}.
\end{equation}
Take now $q>2n+1$. By \cite{adams}, Theorem 4.12, keeping in mind that $\Pi_E$ is an operator of order 0 or 1,
depending on the degree of the form, we have:
\begin{equation}\label{dec 29 4}\begin{split}
\|KS & \omega\|_{L^\infty(B,E_0^{h-1})} \le C \|KS\omega\|_{W^{1,q}_{\mathrm{Euc}}(B,E_0^{h-1})} 
\\&
= C \|(\Pi_{E_0}\circ \Pi_E \circ K_{\mathrm{Euc}}   \circ \Pi_E) S\omega\|_{W^{1,q}_{\mathrm{Euc}}(B,E_0^{h-1})}
\\&
\le C \|(\Pi_E \circ K_{\mathrm{Euc}}   \circ \Pi_E) S\omega\|_{W^{1,q}_{\mathrm{Euc}}(B, {\bigwedge}\vphantom{!}^{h-1})}
\\&
\le C \|( K_{\mathrm{Euc}}   \circ \Pi_E) S\omega\|_{W^{2,q}_{\mathrm{Euc}}(B, {\bigwedge}\vphantom{!}^{h-1})}
\\&
\le C \| \Pi_E S\omega\|_{W^{2,q}_{\mathrm{Euc}}(B_1, {\bigwedge}\vphantom{!}^{h-1})} \qquad\mbox{(by Theorem \ref{chapeau})}
\\&
\le C \|S\omega\|_{W^{3,q}_{\mathrm{Euc}}(B_1,E_0^{h-1})} \le C  \|S\omega\|_{W^{3,\infty}_{\mathrm{Euc}}(B_1,E_0^{h-1})}
\\&
\le C  \|\omega\|_{L^{\infty}(B_\lambda,E_0^{h-1})} \qquad\mbox{(by \eqref{smoothing eq:1A}).}
\end{split}\end{equation}

Combining \eqref{dec 29 3} and \eqref{dec 29 4} it follows from \eqref{dec 29 2} that 
\begin{equation}\label{jan 2 1}
\|\phi  \|_{L^\infty(B,E_0^{h-1})}  \le C \|\omega\|_{L^{\infty}(B_\lambda,E_0^{h-1})},
\end{equation}
i.e. (keeping in mind \eqref{dec 29 6}), interior $\he{}$-$\mathrm{Poincar\acute{e}}_{\infty,\infty}(h)$ inequality holds.

\bigskip

ii) Interior $\he{}$-$\mathrm{Sobolev}_{\infty,\infty}(h)$ inequality: let $\omega \in L^\infty(B_\lambda,E_0^h)$ be $d_c$-closed
and compactly supported.
By  \eqref{homotopy dec 27}, if we take therein $U:= B_1$ and $U':= B_\lambda$,  we can write
 \begin{equation*}
\omega = d_cT\omega + S\omega \qquad\mbox{in $B_1$.}
\end{equation*}
  By 
\eqref{dec 29 1} $S\omega\in 
\mc C^\infty(B_1,E_0^h)$ and $d_cS\omega =0$ since $d_c\omega = d_c^2T\omega + d_cS\omega$ in $B_1$
and $d_c\omega=0$ (by assumption).

We notice now that $T\omega$ is supported in $B_1$ provided $R>0$ is small enough, so that, by \eqref{dec 29 1},
also $S\omega$ is supported in $B_1$. Thus, arguing as above, we can apply \eqref{homotopy closed} to $S\omega$
and we get $S\omega = d_c J S\omega$, where $J$ is defined in \eqref{may 31 eq:2}.
In $B_1$, put now 
$$
\phi:= (JS+T)\omega.
$$ 
We stress that, again by Lemma \ref{homotopy 1}, $JS\omega$ is compactly supported in $B_1$. Again as above, 
\begin{equation}\label{dec 29 7}
d_c\phi = d_cJS\omega + d_cT\omega = S\omega +
d_cT\omega = \omega.
\end{equation}
We can repeat now the arguments yielding the estimates \eqref{dec 29 4} and \eqref{jan 2 1}, 
replacing Theorem \ref{chapeau} by Theorem \ref{MMMTh}. Thus interior $\he{}$-$\mathrm{Sobolev}_{\infty,\infty}(h)$ inequality
follows.
\end{proof}

\section{Cohomology for annuli}\label{homotopy annuli}

The proof of $\he{}$-$\mathrm{Poincar\acute{e}}_{Q ,\infty}(h)$  (Theorem \ref{poincare infty}) given in Section \ref{caldo} relies basically on a duality argument and the dual inequality of \cite{BFP3} (see Theorem \ref{S introduction}). 

On the contrary, the proof of $\he{}$-$\mathrm{Sobolev}_{Q ,\infty}(h)$ (Theorem \ref{sobolev q infty}) requires a more sophisticated argument based on localization  on Kor\'anyi annuli. The present section is precisely devoted to prove that the $L^{\infty,\infty}$ cohomology 
of Rumin's closed forms vanishes on Kor\'anyi annuli. To this end, we prove first that de Rham $L^{\infty,\infty}$ cohomology of closed forms vanishes 
on Euclidean annuli. It follows that the same statement holds for suitable Kor\'anyi annuli (see Corollary \ref{koranyi annuli de rham}) end eventually the assertion is proven.

Let us start with the following definition.

\begin{definition}\label{aria condizionata}
Let $D\subset \mathbb R^{2n+1}$ be an open set. Let $s\in\mathbb{N}$
and $1\leq p,q\leq\infty$. If $1\le h\le 2n+1$, we define cohomology spaces
\begin{eqnarray*}
H^{s,q,p,h}_{\mathrm{de\ Rham}}(D)=(W^{s,p}_{\mathrm{Euc}}(D, \cov{h})\cap\mathrm{ker}d)/dW^{s,q}_{\mathrm{Euc}}(D, \cov{h-1}),
\end{eqnarray*}
and we denote by 
\begin{eqnarray*}
EH^{s,q,p,h}_{\mathrm{de\ Rham}}(D)=\mathrm{ker}(H^{s,q,p,h}_{\mathrm{de\ Rham}}(D)\to H^h(D))
\end{eqnarray*}
the cohomology of exact differential forms. Similar definitions hold with $d$ replaced with $d_c$, yielding the corresponding spaces 
{
$$H_{E_0}^{s,q,p,h}\quad \mbox{ and} \quad EH_{E_0}^{s,q,p,h}$$
}
for Rumin's differential forms.

If $s=0$ we shall write $H_{E_0}^{q,p,h} for H_{E_0}^{0,q,p,h}$.
\end{definition}

\begin{notation} If $0<s_1<s_2$ we denote by $A_{s_1,s_2}^\mathrm{Euc}$ the (Euclidean) annulus
$$
A_{s_1,s_2}^\mathrm{Euc} = B_\mathrm{Euc}(e,s_2)\setminus\overline{B_\mathrm{Euc}(e,s_1)}.
$$
Analogously, if $0<r_1<r_2$, we denote by $A_{r_1,r_2}$ the (Kor\'anyi) annulus
$$
A_{r_1,r_2} = B(e,r_2)\setminus\overline{B(e,r_1)}.
$$
\end{notation}

Given $0<r_1<r_2$, let $A_{r_1,r_2}$ be the (Kor\'anyi) annulus in $\he n$.
Put $\partial^+ A_{r_1.r_2}:= \partial B(e, r_2)$ and $\partial^- A_{r_1.r_2}:= \partial B(e, r_1)$. The meaning, in the Euclidean case, of 
$\partial^\pm A^\mathrm{Euc}_{r_1.r_2}$ is analogous.
Set $$V:= A_{1,2}$$ and $\sigma_1:=\frac 12 \min_{\partial^-V} |x| >0$ and $\sigma_2:=2 \max_{\partial^+V} |x|$. It turns out 
that $V\Subset A^\mathrm{Euc}_{\sigma_1,\sigma_2}=: \tilde V$.
Put now $\tilde V ':= A^\mathrm{Euc}_{\frac12 \sigma_1, 2\sigma_2}$, obviously $\tilde V\Subset \tilde V'$.
Finally set $\tau_1:=\frac 12 \min_{\partial^-\tilde V'} \rho(x) >0$ and we fix (once for all) $\tau_2 > \max_{\partial^+\tilde V'} \rho(x)$.
Then the Kor\'any annulus $ V ':=  A_{\frac12 \tau_1, 2\tau_2}$ satisfies
\begin{equation}\label{V V'}
V \Subset \tilde V \Subset \tilde V' \Subset V'.
\end{equation}

Notice that $\sigma_1= \frac 12 \min_{B(e,1)^c} |x|$, $\tau_1=\frac 12 \min_{B_{\mathrm{Euc}}(e,\frac12 \sigma_1)^c} \rho(x) $ and $\sigma_2= 2 \max_{B(e,2)} |x|$, $\tau_2=2 \max_{B_{\mathrm{Euc}}(e,2 \sigma_2)} \rho(x) $. 

\begin{definition}\label{admissible bis} With the notation introduced above, 
let $U=A_{s_1,s_2}$ and $U'=A_{r_1,r_2}$ be concentric Kor\'anyi annuli in $\he n$, $U\subset U'$.
We say that the couple $(U,U')$ is {\bf annulus-admissible} if,
with the notations of \eqref{V V'}, there exists $t>0$ such that 
$$
V' \subset \delta_t U' \qquad\mbox{and} \qquad \delta_t U \subset V.
$$
\end{definition}

 \begin{remark}\label{april 24}
A straightforward computation shows that the previous definition make sense.

Indeed, if $0<r_1<r_2< \frac{\tau_2}{\tau_1}r_1$ there exist $0<s_1<s_2$ such that the couple
$$
U:=A_{s_1,s_2}\qquad\mbox{and} \qquad U':=A_{r_1,r_2}
$$
is annulus admissible. More precisely the assertion holds provided 
$$
\frac{r_2}{\tau_2} < s_1 < \frac{r_1}{\tau_1}\qquad\mbox{or}\qquad
\frac{ 2r_2}{\tau_2} < s_2 < \frac{2r_1}{\tau_1}.
$$
\end{remark}

\begin{proposition}
\label{euclidean annuli}
Let $1\leq p\leq\infty$. Let $A^\mathrm{Euc}_{s_1,s_2} \Subset A^\mathrm{Euc}_{r_1,r_2}$ be concentric Euclidean annuli in $\R^{2n+1}$. Then the map 
$EH^{s,p,p,*}_{\mathrm{de\ Rham}}(A^\mathrm{Euc}_{r_1,r_2})\to EH^{s,p,p,*}_{\mathrm{de\ Rham}}(A^\mathrm{Euc}_{s_1,s_2})$ induced by the inclusion 
$A^\mathrm{Euc}_{s_1,s_2} \subset A^\mathrm{Euc}_{r_1,r_2}$ vanishes.
\end{proposition}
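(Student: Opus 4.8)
The plan is to reduce the statement to a quantitative one and then patch together the local homotopy operators of Theorems~\ref{chapeau} and \ref{MMMTh} by a Mayer--Vietoris argument. First I would unwind the definitions: the vanishing of $EH^{s,p,p,\bullet}_{\mathrm{de\ Rham}}(A^\mathrm{Euc}_{r_1,r_2})\to EH^{s,p,p,\bullet}_{\mathrm{de\ Rham}}(A^\mathrm{Euc}_{s_1,s_2})$ means exactly that for every closed $\omega\in W^{s,p}_{\mathrm{Euc}}(A^\mathrm{Euc}_{r_1,r_2},\cov h)$ which is exact in the ordinary de Rham sense on $A^\mathrm{Euc}_{r_1,r_2}$, the restriction $\omega|_{A^\mathrm{Euc}_{s_1,s_2}}$ equals $d\gamma$ for some $\gamma\in W^{s,p}_{\mathrm{Euc}}(A^\mathrm{Euc}_{s_1,s_2},\cov{h-1})$; once a linear bounded construction of $\gamma$ is produced, the implicit uniform bound $\|\gamma\|\le C\|\omega\|$ comes for free. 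Since a Euclidean annulus in $\R^{2n+1}$ (here $2n+1\ge3$) is homotopy equivalent to the sphere $S^{2n}$, the hypothesis ``$\omega$ exact'' is in fact empty for $1\le h\le 2n+1$, $h\neq 2n$, while for $h=2n$ it amounts to the vanishing of the flux of $\omega$ across a separating sphere.

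Next I would fix auxiliary radii $r_1<a<s_1<s_2<b<r_2$, so that $\overline{A^\mathrm{Euc}_{s_1,s_2}}\Subset A^\mathrm{Euc}_{a,b}\Subset A^\mathrm{Euc}_{r_1,r_2}$, and cover the compact set $\overline{A^\mathrm{Euc}_{s_1,s_2}}$ by finitely many Euclidean balls $U_1,\dots,U_m$ chosen so small that $\overline{U_j}\subset A^\mathrm{Euc}_{a,b}$ and the concentric doubles $2U_j$ still lie in $A^\mathrm{Euc}_{r_1,r_2}$. Balls, and all their nonempty finite intersections, are open convex sets, hence contractible Lipschitz star-shaped domains; so $\{U_j\}$ is a good cover of a neighbourhood of $\overline{A^\mathrm{Euc}_{s_1,s_2}}$, its nerve computes the de Rham cohomology of $\bigcup_jU_j$, and on each $U_{j_0}\cap\cdots\cap U_{j_k}$ (and on a concentrically enlarged convex domain still contained in $A^\mathrm{Euc}_{r_1,r_2}$) the Iwaniec--Lutoborski and Mitrea--Mitrea--Monniaux homotopy operators of Theorems~\ref{chapeau} and \ref{MMMTh} are available, after translation and dilation, as bounded operators inverting $d$ on closed forms with $W^{s,p}_{\mathrm{Euc}}$ estimates.

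Then I would run the generalized Mayer--Vietoris (Bott--Tu collating) argument on the double complex $\check C^\bullet(\{U_j\},\Omega^\bullet_{\mathrm{Euc}})$: solving $d$ locally on each $U_j$ gives primitives of $\omega|_{U_j}$; reconciling them over successive intersections by again solving $d$ locally produces the Čech--de Rham ``staircase'' down to a Čech $h$-cocycle valued in locally constant functions, whose class in $\check H^h(\{U_j\},\R)\cong H^h(\bigcup_jU_j)$ is, up to sign, the de Rham class of $\omega$; since $\omega$ is exact on $A^\mathrm{Euc}_{r_1,r_2}\supset\bigcup_jU_j$ this class vanishes, so the cocycle is a Čech coboundary, and collating back up---contracting with a fixed smooth partition of unity $\{\phi_j\}$ subordinate to $\{U_j\}$, interspersed with $d$---yields a global primitive $\gamma$ of $\omega$ on $\bigcup_jU_j$, which we restrict to $A^\mathrm{Euc}_{s_1,s_2}$. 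All the operations used (finitely many local homotopies, multiplications by the fixed $\phi_j$, exterior derivatives, restrictions) are bounded on the relevant Euclidean Sobolev spaces, giving $\|\gamma\|_{W^{s,p}_{\mathrm{Euc}}(A^\mathrm{Euc}_{s_1,s_2})}\le C\,\|\omega\|_{W^{s,p}_{\mathrm{Euc}}(A^\mathrm{Euc}_{r_1,r_2})}$ with $C$ depending only on $n,s,p$ and the chosen radii.

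The main obstacle is the book-keeping of regularity and domain. Each exterior derivative appearing in the collating formula costs one Euclidean derivative, which must be paid for by the smoothing of the local homotopy operators: for $1<p<\infty$ this is automatic, since Iwaniec--Lutoborski's operator on a convex domain gains a derivative ($W^{k,p}_{\mathrm{Euc}}\to W^{k+1,p}_{\mathrm{Euc}}$), and the finitely many applications keep the compounded domain losses inside $A^\mathrm{Euc}_{r_1,r_2}$ by the $\Subset$ hypothesis; at the endpoints $p\in\{1,\infty\}$, where Calderón--Zygmund boundedness and hence the derivative gain fail and one is forced to use the no-gain/domain-loss form of Theorem~\ref{chapeau}, I would first prepend a smoothing step in the spirit of Proposition~\ref{smoothing-A} (write $\omega=dT\omega+Td\omega+S\omega$ on the annulus with $T$ bounded on $W^{s,p}_{\mathrm{Euc}}$ and $S$ regularizing, so that the exterior derivatives in the collating formula fall on the smooth remainder $S\omega$, whose primitive on the annulus is controlled by Hodge theory on the compact retract $S^{2n}$). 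A conceptually cleaner but less self-contained alternative is to observe that $A^\mathrm{Euc}_{s_1,s_2}$ and $A^\mathrm{Euc}_{r_1,r_2}$ are diffeomorphic to $S^{2n}\times I$ and to invoke a Künneth-type splitting reducing the vanishing to the trivial $L^{p,p}$-cohomology of the interval $I$; the covering argument above is the hands-on version, keeping constants explicit and sidestepping closed-range subtleties at $p=1,\infty$.
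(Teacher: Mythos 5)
Your argument is correct in outline, but it follows a genuinely different route from the paper's. The paper uses no covering and no \v{C}ech--de Rham collating: it pulls the annulus back to a product $(-2,2)\times\Sigma$ with $\Sigma=S^{2n}$, splits $\omega=a_t+dt\wedge b_t$, integrates $b$ along the interval to get a family of primitives $\gamma_x$ of $\omega-a_x$, solves $de_x=a_x$ on the sphere by Hodge theory ($e_x=\delta\Delta^{-1}a_x$, with an elliptic gain of one derivative and, at $p=\infty$, a Sobolev embedding from $W^{s+1,p}$ with $p>2n+1$), and finally averages $\gamma=\frac12\int_{-1}^{1}(e_x+\gamma_x)\,dx$ over the inner interval; the averaging is what converts the pointwise-in-$x$ data into a quantity controlled by the $W^{s,p}$ norm on the larger annulus, and this is where the hypothesis $\Subset$ enters. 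That proof is shorter and handles $p=1,\infty$ with no extra machinery, because the only singular-integral input lives on the compact sphere, where elliptic regularity plus embedding suffices. Your Mayer--Vietoris construction buys generality (it would work for any relatively compact pair of domains admitting a finite good cover, with the topological input isolated in a finite-dimensional \v{C}ech complex) at the cost of heavier bookkeeping, and two points would still need to be nailed down: (i) uniform local homotopy estimates on \emph{intersections} of balls --- Theorem \ref{chapeau} is stated only for a pair of concentric balls, and ``translation and dilation'' does not produce intersections, so for $1<p<\infty$ you must invoke the original Iwaniec--Lutoborski statement on arbitrary bounded convex domains (and its higher-order version); and (ii) the endpoint smoothing step, whose Euclidean analogue of Proposition \ref{smoothing-A} you would have to write out --- note that your fallback for the smooth remainder (``Hodge theory on the compact retract $S^{2n}$'') is in effect the paper's entire proof.
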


\begin{proof}
We use a diffeomorphism of $U'$ to $(-2,2)\times\Sigma$ mapping $U$ to $(-1,1)\times\Sigma$, where $\Sigma$ denotes the $2n$-sphere. Then we use Poincar\'e's homotopy formula in order to relate the cohomology of $(-2,2)\times\Sigma$ to the cohomology of $\Sigma$. Forms on the product can be written
\begin{eqnarray*}
\omega=a_t+dt\wedge b_t,
\end{eqnarray*}
where $a_t$ and $b_t$ are forms on $\Sigma$. Then 
\begin{eqnarray*}
d\omega=da_t+dt\wedge(\frac{\partial a_t}{\partial t}-db_t),
\end{eqnarray*}
where the right-hand side $d$ is the exterior differential on $\Sigma$. Assuming that $d\omega=0$, i.e. $da_t=0$ and $\frac{\partial a_t}{\partial t}=db_t$ for all $t\in(-2,2)$, set, for $\sigma\in\Sigma$ and $x\in(-1,1)$,
\begin{eqnarray*}
\gamma_x(t,\sigma)=\int_{x}^{t}b_u\,du.
\end{eqnarray*}
We observe that for all $p\geq 1$,
$$
\|\frac{1}{2}\int_{-1}^{1}\gamma_x\,dx\|_{W^{s,p}_{\mathrm{Euc}}}\leq \|\omega\|_{W^{s,p}_{\mathrm{Euc}}}
$$

By construction,
\begin{eqnarray*}
d\gamma_x=dt\wedge b_t+\int_{x}^{t}db_u\,du=\omega-a_x.
\end{eqnarray*}
Now assume that $\omega$ is exact, $\omega=d(e_t+dt\wedge f_t)$. Then $de_t=a_t$ for all $t\in(-2,2)$. Set
\begin{eqnarray*}
\gamma=\frac{1}{2}\int_{-1}^{1}(e_x+\gamma_x)\,dx,\quad\textrm{so that}\quad d\gamma=\omega.
\end{eqnarray*}
If $\omega\in W^{s,p}_{\mathrm{Euc}}$, so is each $\gamma_x$. On $\Sigma$, use the coexact primitive $e_x=\delta\Delta^{-1}a_x$
(see, e.g. \cite{GMS}, Section 2.5). Here $\Delta$ is the usual Hodge Laplacian on de Rham's differential $d$). Then, if $p<\infty$, 
\begin{eqnarray*}
\|\frac{1}{2}\int_{-1}^{1}e_x\,dx\|_{W^{s+1,p}_{\mathrm{Euc}}}\leq C\,\|\frac{1}{2}\int_{-1}^{1}a_x\,dx\|_{W^{s,p}_{\mathrm{Euc}}}\leq C'\,\|\omega\|_{W^{s,p}_{\mathrm{Euc}}}.
\end{eqnarray*}
If $p=\infty$, one picks $p>2n+1$, so that the Sobolev embedding theorem applies,
$$
\|\frac{1}{2}\int_{-1}^{1}e_x\,dx\|_{W^{s,\infty}_{\mathrm{Euc}}}\leq C\,\|\frac{1}{2}\int_{-1}^{1}e_x\,dx\|_{W^{s+1,p}_{\mathrm{Euc}}}.
$$
Obviously, $\|\omega\|_{W^{s,p}_{\mathrm{Euc}}}\leq C\,\|\omega\|_{W^{s,\infty}_{\mathrm{Euc}}}$. Hence the primitive $\gamma$ is bounded by $\omega$ in $W^{s,p}_{\mathrm{Euc}}$ norm in all cases.
This shows that the cohomology class of $\omega$ in $EH^{s,p,p,*}_{\mathrm{de\ Rham}}(U)$ vanishes.
\end{proof}

As a consequence of the previous result and keeping in mind Definition \ref{admissible bis}, we can prove the following corollary.

\begin{corollary}
\label{koranyi annuli de rham}
Let $U,U'$ be concentric Kor\'anyi annuli in $\he n$, $U\subset U'$
such that the couple $(U,U')$ is \emph{annulus-admissible}.
 Then the map 
$EH^{s,p,p,\bullet}_{\mathrm{de\ Rham}}(U')\to EH^{s,p,p,\bullet}_{\mathrm{de\ Rham}}(U)$ induced by the inclusion $U\subset U'$ vanishes
for $1\le p\le\infty$.

\end{corollary}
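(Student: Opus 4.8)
The plan is to transfer the vanishing statement for Euclidean annuli (Proposition \ref{euclidean annuli}) to Korányi annuli by means of the anisotropic dilations $\delta_t$. The key point is that de Rham's complex is natural under diffeomorphisms, and the dilations $\delta_t\colon\he n\to\he n$ are diffeomorphisms of $\rn{2n+1}$ (affine in the coordinates $(x,y,t)$), so they induce isomorphisms on all the spaces $EH^{s,p,p,\bullet}_{\mathrm{de\ Rham}}$ that are compatible with pullback of forms and with the inclusion-induced maps. Concretely, $\delta_t^*$ pulls $W^{s,p}_{\mathrm{Euc}}$-forms back to $W^{s,p}_{\mathrm{Euc}}$-forms (with norm distortion controlled by powers of $t$, which is harmless for the vanishing statement), intertwines $d$ with $d$, and hence descends to cohomology.

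\textbf{Main steps.} First I would recall that, by Definition \ref{admissible bis}, there is a $t>0$ with $\delta_t U\subset V$ and $V'\subset\delta_t U'$, where $V=A_{1,2}$ and $V'$ are the fixed Korányi annuli of \eqref{V V'}, and by construction $V\Subset\tilde V\Subset\tilde V'\Subset V'$ with $\tilde V=A^{\mathrm{Euc}}_{\sigma_1,\sigma_2}$ and $\tilde V'=A^{\mathrm{Euc}}_{\frac12\sigma_1,2\sigma_2}$ Euclidean annuli. Second, I would assemble the commutative diagram of inclusion-induced restriction maps on $EH^{s,p,p,\bullet}_{\mathrm{de\ Rham}}$ associated to the chain of open sets
$$
\delta_t U\ \subset\ V\ \subset\ \tilde V\ \subset\ \tilde V'\ \subset\ V'\ \subset\ \delta_t U'.
$$
The composite map $EH^{s,p,p,\bullet}_{\mathrm{de\ Rham}}(\delta_t U')\to EH^{s,p,p,\bullet}_{\mathrm{de\ Rham}}(\delta_t U)$ factors through the map $EH^{s,p,p,\bullet}_{\mathrm{de\ Rham}}(\tilde V')\to EH^{s,p,p,\bullet}_{\mathrm{de\ Rham}}(\tilde V)$, which vanishes by Proposition \ref{euclidean annuli} applied to the pair $\tilde V\Subset\tilde V'$ of Euclidean annuli. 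Hence the composite vanishes. Third, I would conjugate by $\delta_t^*$: the square
$$
\begin{CD}
EH^{s,p,p,\bullet}_{\mathrm{de\ Rham}}(U') @>{\delta_t^*}>> EH^{s,p,p,\bullet}_{\mathrm{de\ Rham}}(\delta_t U')\\
@VVV @VVV\\
EH^{s,p,p,\bullet}_{\mathrm{de\ Rham}}(U) @>{\delta_t^*}>> EH^{s,p,p,\bullet}_{\mathrm{de\ Rham}}(\delta_t U)
\end{CD}
$$
commutes (vertical arrows are restrictions, horizontal arrows are the pullback isomorphisms), so since the right column vanishes and the horizontal arrows are isomorphisms, the left column vanishes as well. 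That is exactly the assertion.

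\textbf{Expected obstacle.} The only delicate point is the behavior of Sobolev spaces under $\delta_t^*$: one must check that pullback by a dilation maps $W^{s,p}_{\mathrm{Euc}}$ boundedly to $W^{s,p}_{\mathrm{Euc}}$, in both directions, on the relevant relatively compact domains. Since $\delta_t$ is linear in the coordinates, its Jacobian is constant and all derivatives of the change of variables are constant, so this is a routine chain-rule estimate — the norm is distorted only by fixed powers of $t$ depending on $s$, $p$ and the weights, and boundedness of the primitive in $W^{s,p}_{\mathrm{Euc}}$ is preserved. No uniformity in $t$ is needed, since $t$ is fixed once the annulus-admissible pair is given. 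Beyond that, one should note that Proposition \ref{euclidean annuli} does require $\tilde V\Subset\tilde V'$, which holds by \eqref{V V'}, so the hypotheses are met and the argument goes through for every $p\in[1,\infty]$.
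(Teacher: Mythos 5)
Your proposal is correct and follows essentially the same route as the paper: sandwich the dilated Korányi annuli $\delta_t U\subset V\Subset\tilde V\Subset\tilde V'\Subset V'\subset\delta_t U'$ around the Euclidean annuli of \eqref{V V'}, apply Proposition \ref{euclidean annuli} to the pair $\tilde V\Subset\tilde V'$ so that the composite restriction map factors through zero, and transfer back to $(U,U')$ by the pullback isomorphisms $\delta_t^*$. Your extra remarks on the boundedness of $\delta_t^*$ on $W^{s,p}_{\mathrm{Euc}}$ just flesh out what the paper calls "a pull-back argument."
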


\begin{proof}

Suppose $U'\subset \delta_t V'$ and $\delta_t V\subset U$, where $V,V'$ and $t>0$ are as in Definition \ref{admissible bis}.
By \eqref{V V'} and Proposition \ref{euclidean annuli} we can conclude straightforwardly that
the map 
$EH^{s,p,p,*}_{\mathrm{de\ Rham}}(V')\to EH^{s,p,p,*}_{\mathrm{de\ Rham}}(V)$ induced by the inclusion $V\subset V'$ vanishes,
so that the map 
$EH^{s,p,p,*}_{\mathrm{de\ Rham}}(\delta_t U)\to EH^{s,p,p,*}_{\mathrm{de\ Rham}}(\delta_t U')$ vanishes. The assertion
follows by a pull-back argument.

\end{proof}

\begin{proposition}\label{p cohomology k-annuli}
 Let $U=A_{s_1,s_2}$ and $U''=A_{r_1,r_2}$ be concentric Kor\'any annuli in $\he{n}$. Assume
$(U,U'')$ are annulus-admissible (see Definition \ref{admissible bis}). Then the map 
$$EH_{E_0}^{\infty,\infty,\bullet}(A_{r_1,r_2})\to EH_{E_0}^{\infty,\infty,\bullet }(A_{s_1,s_2})$$ induced by inclusion $U\subset U''$ vanishes.
\end{proposition}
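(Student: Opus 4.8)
The plan is to reduce the statement to its de Rham analogue, Corollary \ref{koranyi annuli de rham}, through Rumin's homotopy equivalence: transfer a representative to the de Rham side via $\Pi_E$, kill its (now de Rham) cohomology class on the smaller annulus, and transfer the resulting primitive back via $\Pi_{E_0}$. The only difficulty is that $\Pi_E$ has order $\le 1$, so one first smooths the representative by means of Proposition \ref{smoothing-A} in order to have derivatives to spare.

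First I would fix a degree $h$ and a class of $EH_{E_0}^{\infty,\infty,h}(A_{r_1,r_2})$, represented by a $d_c$-closed form $\omega\in L^\infty(A_{r_1,r_2},E_0^h)$ whose image in $H^h(A_{r_1,r_2})$ vanishes; equivalently, $\omega$ admits a distributional primitive on $A_{r_1,r_2}$, say $\omega=d_c\psi_0$. By Remark \ref{april 24} I may choose an intermediate Kor\'anyi annulus $U'$ with $U:=A_{s_1,s_2}\Subset U'\Subset U'':=A_{r_1,r_2}$ such that $(U,U')$ is again annulus-admissible. Fix an integer $s\geq 2$. Proposition \ref{smoothing-A} applied to the pair $U'\Subset U''$ produces $T\omega\in L^\infty(U',E_0^{h-1})$ and $S\omega\in W^{s,\infty}_{\mathrm{Euc}}(U',E_0^h)$ (see Remark \ref{smoothing remark}) with
$$
\omega=d_cT\omega+S\omega\qquad\text{on }U',
$$
since $d_c\omega=0$. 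As $\omega=d_c\psi_0$ and $T\omega$ is a genuine $L^\infty$ form, one has $S\omega=d_c(\psi_0-T\omega)$ on $U'$; in particular $S\omega$ is $d_c$-closed and distributionally $d_c$-exact there.

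Next I would pass to the de Rham complex. Let $\eta:=\Pi_E S\omega$; since $\Pi_E$ has order $\le 1$ with smooth coefficients, $\eta\in W^{s-1,\infty}_{\mathrm{Euc}}(U',\bigwedge^h)$. From $d_c=\Pi_{E_0}d\Pi_E\Pi_{E_0}$ and $S\omega\in E_0$, the identity $d_cS\omega=0$ reads $\Pi_{E_0}(d\eta)=0$; as $d\eta\in E$ and $\Pi_{E_0}$ is injective on $E$ (a consequence of $\Pi_E\Pi_{E_0}\Pi_E=\Pi_E$), this forces $d\eta=0$. Applying $\Pi_E$ to $S\omega=d_c(\psi_0-T\omega)$ and using $\Pi_E\Pi_{E_0}\Pi_E=\Pi_E$ and $d\Pi_E=\Pi_E d$, one gets $\eta=d\bigl(\Pi_E\Pi_{E_0}(\psi_0-T\omega)\bigr)$ on $U'$, so $\eta$ has a distributional primitive and hence represents $0$ in $EH^{s-1,\infty,\infty,h}_{\mathrm{de\ Rham}}(U')$. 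Corollary \ref{koranyi annuli de rham}, applied to the annulus-admissible couple $(U,U')$ at regularity $s-1$ and exponent $\infty$, then yields $\mu\in W^{s-1,\infty}_{\mathrm{Euc}}(U,\bigwedge^{h-1})$ with $d\mu=\eta$ on $U$.

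Finally I would transfer back. Set $\phi:=T\omega+\Pi_{E_0}\Pi_E\mu$; since $\Pi_E$ lowers Euclidean regularity by at most one and $\Pi_{E_0}$ is algebraic, $\Pi_{E_0}\Pi_E\mu\in W^{s-2,\infty}_{\mathrm{Euc}}(U,E_0^{h-1})\subset L^\infty(U,E_0^{h-1})$, so $\phi\in L^\infty(U,E_0^{h-1})$. A direct computation with the relations $\Pi_{E_0}^2=\Pi_{E_0}$, $\Pi_E\Pi_{E_0}\Pi_E=\Pi_E$, $d\Pi_E=\Pi_E d$ and the idempotency of $\Pi_E$ gives $d_c(\Pi_{E_0}\Pi_E\mu)=\Pi_{E_0}d\mu=\Pi_{E_0}\eta=S\omega$ on $U$, where $\Pi_{E_0}\eta=\Pi_{E_0}\Pi_E S\omega=S\omega$. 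Hence $d_c\phi=d_cT\omega+S\omega=\omega$ on $U$, which shows that the restriction of $\omega$ to $U$ is $d_c$-exact in $L^\infty$; its class in $EH_{E_0}^{\infty,\infty,h}(U)$ — indeed in $H_{E_0}^{\infty,\infty,h}(U)$ — vanishes, which is the assertion. I expect the main obstacle to be the bookkeeping that makes this go through: one must keep enough regularity (the choice $s\geq 2$) so that two applications of the order-$\le 1$ operator $\Pi_E$ still leave the primitive in $L^\infty$, and one must verify at each stage that Rumin's algebraic identities convert ``$d_c$-exact'' and ``$d$-exact'' into each other, in particular that $d_cS\omega=0$ implies $d\Pi_E S\omega=0$ and that $\Pi_{E_0}$ sends a suitable de Rham primitive to a $d_c$-primitive.
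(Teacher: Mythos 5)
Your proposal is correct and follows essentially the same route as the paper: smooth the representative with the operators $T,S$ of Proposition \ref{smoothing-A}, push the smooth $d_c$-closed remainder into the de Rham complex via $\Pi_E$, solve there using Corollary \ref{koranyi annuli de rham} on the smaller annulus, and pull the primitive back with $\Pi_{E_0}\Pi_E$, keeping enough Euclidean regularity to absorb the two order-$\le 1$ applications of $\Pi_E$. Your explicit verification that $\eta=\Pi_E S\omega$ is $d$-exact (so that the $EH$ version of the corollary applies) is a point the paper treats more tersely, but the argument is the same.
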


\begin{proof}
Let the annulus $U'$ be such that $U\subset U'\subset U''$ and such that the couple $(U,U')$ is still annulus-admissible
as in Definition \ref{admissible bis} (this is possible by Remark \ref{april 24}). 

Let  $\omega$ be a $d_c$-exact Rumin form on $U''$, 
which belongs to $L^\infty(U'', E_0^\bullet)$. 

Apply formula \eqref{smoothing eq:1A} of Proposition \ref{smoothing-A} with $s=5$. Then, if we set $S\omega=:\omega'\in W^{5,\infty}(U',E_0^{\bullet}) $, 
we have 
\begin{equation}\label{May 8}	
\omega=\omega'+d_c\alpha \quad\mbox{on $U'$}, \qquad\mbox{where} \qquad \alpha=T\omega\in L^\infty(U',E_0^{\bullet-1}).
\end{equation}

Consider $\omega'':=\Pi_E \omega'$. 
Obviously,  $\omega''=\Pi_E \omega''$. Moreover, by Theorem \ref{main rumin new}-iv),  $\Pi_{E_0}\Pi_E\Pi_{E_0} = \Pi_{E_0}$, and
$\Pi_{E_0}\omega'$ since
$\omega'$ is a Rumin form. Therefore
$$
\Pi_{E_0}\omega''= \Pi_{E_0}\Pi_E \omega'=\Pi_{E_0}\Pi_E\Pi_{E_0}\omega'=\omega'.
$$
Notice that $d\omega'' =0$ in $U'$. Indeed, since $\omega$ is $d_c$-exact,
then  $0= d_c\omega$ and hence $d_c\omega' =0$ in $U'$.
Therefore
$0= d_c\omega' = \Pi_{E_0}\Pi_E d\omega'$, so that
$0= \Pi_E\Pi_{E_0}\Pi_E d\omega' = \Pi_E d\omega' = d\Pi_e\omega'= d\omega''$
in $U'$ (keep in mind $d\Pi_E = \Pi_Ed$ by Theorem \ref{main rumin new}). 

In addition, $\omega''\in W^{4,\infty}(U',\Omega^\bullet)\subset W^{2,\infty}_{\mathrm{Euc}}(U', \Omega^\bullet)$.

According to Corollary \ref{koranyi annuli de rham}, there exists a differential form 
$\gamma\in W^{2,\infty}_{\mathrm{Euc}}(U)$ such that $\omega''=d\gamma$ on $U$. Hence
\begin{eqnarray*}
\omega''=\Pi_E \omega''=\Pi_Ed\gamma=d\Pi_E \gamma.
\end{eqnarray*}
If we set  $\eta=\Pi_{E_0}\Pi_E\gamma$, then in particular
$\eta\in L^\infty (U,E_0^{\bullet -1})$ and it follows that
\begin{equation*}\begin{split}
d_c\eta &= \Pi_{E_0}d\Pi_E \Pi_{E_0}\Pi_E\gamma = \Pi_{E_0}d \Pi_E\gamma 
\\&
=  \Pi_{E_0} \Pi_E d\gamma = \Pi_{E_0} \Pi_E\omega'' =   \Pi_{E_0} \omega'' = \omega'.
\end{split}\end{equation*} 
Hence, by \eqref{May 8},
$$
\omega=d_c(\eta+\alpha) \qquad\mbox{in $U$.}
$$
This shows that the cohomology class of the restriction of $\omega$ to $U$ vanishes in
 $EH^{\infty,\infty,\bullet}_{ E_0}(U)$.
\end{proof}

\begin{remark}\label{May 8 bis}
Repeating verbatim the proof of the previous theorem and keeping into account
\eqref{consiglio2} in Proposition \ref{smoothing-A}, when dealing with $(n+1)$-forms
the previous result guarantees the existence of a $W^{1,\infty}$-primitive, i.e.
the map 
$$
EH_{E_0}^{\infty,\infty,n+1}(A_{r_1,r_2})\to EH_{ E_0}^{1, \infty,\infty,n+1 }(A_{s_1,s_2})
$$ 
induced by the inclusion $U\subset U''$ vanishes.

\end{remark}

\section{Proof of Theorem \ref{sobolev q infty}}\label{proof}

We are now able to prove the Sobolev inequality as stated in Theorem \ref{sobolev q infty}.

The proof will be carried out starting from the corresponding Poincar\'e inequality by means of  localizations of our
estimates on a family of annuli via a suitable cut-off. Then a problem arises since the differential $d_c$ may have
order 1 or 2 according to the degree of the forms on which it acts.
Keeping in mind Remark \ref{commutators_remark}, for technical reasons, during the proof we are led to distinguish the case $h\neq n+2$ from the case $h=n+2$.

\begin{proof}[{\bf Proof of Theorem \ref{sobolev q infty}}]
We set
$$
q=q(h):=
\left\{
\begin{array}{lc}
\displaystyle Q
\quad &   \textrm{if}\,\,h\ne n;\\
\\
\displaystyle Q/2 &   \textrm{if}\,\,h=n.
\end{array}
\right. 
$$

Take $B:=B(e,1)$ and $B_\lambda:=B(e,\lambda)$ with  $1+\eps <\lambda< \frac{\tau_2}{\tau_1} (1+\eps)$, where $\eps >0$ and $\tau_1,\tau_2$ are the geometric 
constants introduced in Definition \ref{admissible bis}.

\medskip

 By Remark \ref{april 24} there exist $0<s_1<s_2$ such that for two concentric annuli $A_{1+\eps,\lambda}$ and $A_{s_1,s_2}$,
 the
couple $(A_{1+\eps,\lambda}\,,\, A_{s_1,s_2})$ is annulus-admissible in the sense of Definition \ref{admissible bis}
and
$$
A_{s_1,s_2}\subset A_{1+\eps,\lambda}\subset B_\lambda\setminus B.
$$
If we set
$U':=A_{1+\eps,\lambda}$ and $U:=A_{s_1,s_2}$,
the inclusion above reads as
$$
U\subset U'\subset B_\lambda\setminus B.
$$

Let $\alpha\in L^{q}(B,E_0^{h})$ be a  compactly supported $d_c$-exact $h$-form on the unit ball $B$. If $h=2n+1$, this implies that $\int_{B}\alpha=0$. Otherwise, this simply means that $d_c\alpha=0$ in $B$. We continue $\alpha$ by zero on $\he n\setminus B$. 

 We apply $\he{}$-$\mathrm{Poincar\acute e}_{q,\infty}(h)$ in $2B_\lambda$ (see Theorem \ref{poincare infty}),
and we find $\gamma\in L^\infty(B_\lambda, E_0^{h-1})$ such that 
\begin{equation}\label{8 maggio 2}
d_c\gamma=\alpha\quad \mbox{in $B_\lambda$}\qquad\mbox{and}\qquad \|\gamma\|_{L^\infty(B_\lambda,E_0^{h-1})}\leq C\,\|\alpha\|_{L^q(2B_\lambda, E_0^{h})} = \|\alpha\|_{L^q(B, E_0^{h})}.
\end{equation} 
We emphasize here that the exponent $q$ in \eqref{8 maggio 2} equals   $Q$
{if} $h\ne n$
and  $Q/2$ {if} $ h=n$.

As announced above, we have to distinguish two cases: $h\neq n+2$ and $h=n+2$.
Since in $U'\subset B_\lambda\setminus B$ we have $d_c \gamma = 0$ in $U'$. Furthermore, if $h=2n+1$, 
$$
\int_{\partial B}\gamma=\int_{B}\alpha=0,
$$
which implies that $\gamma$ is exact on $B_\lambda\setminus B$.
Hence
by Proposition \ref{p cohomology k-annuli}, if $h-2\neq n$,
there exists a $(h-2)$-form $\gamma'$ on $U$ such that 
\begin{equation}\label{8 maggio 1}
d_c\gamma'=\gamma\qquad\mbox{in $U$ and}\qquad \|\gamma'\|_{L^\infty(U,E_0^{h-2})} \leq C\,\|\gamma\|_{L^\infty(U',E_0^{h-1})} .
\end{equation}
On the other hand, if $h-2=n$ then, by Remark \ref{May 8 bis},
there exists
 a $\gamma' \in  W^{1,\infty} (U, E_0^{n})  $ such that $d_c \gamma'=\gamma$ in $U$ and

\begin{equation}\label{uffa}
\|\gamma'\|_{W^{1,\infty} (U, E_0^{n})} \leq C\,\|\gamma\|_{L^\infty(U', E_0^{n+1})}.
\end{equation}

Let $\zeta$ be a smooth function in $B_\lambda$ vanishing in 
 $c_1 B$ with $s_1 < c_1 < s_2$, 
such that $\zeta\equiv 1$ outside of $c_2B$,
where $c_1<c_2 <s_2$.
We stress that $\gamma'$ is defined on $U$, and $\zeta\gamma'$
is  supported outside of a  neighborhood  of $\overline{c_1B}$ and therefore can be continued by
0 on all the ball $c_1B$ and then is defined on all of $s_2 B$.

We set 
\begin{equation}\label{11gennaio}\beta:=\gamma-d_c(\zeta\gamma') \end{equation}
(that is still defined on all of $s_2B$).
Now on $ s_2 B\setminus \overline{c_2B}  = U\setminus  \overline{c_2B}$
we have
$$
\beta = \gamma - d_c\gamma' \equiv 0,
$$
so that $\beta$ is compactly supported in $s_2 B$ and can be continued by
0 to a compactly supported form in $B_\lambda$. 

In addition, by \eqref{8 maggio 2},
$$
d_c\beta = d_c\gamma = \alpha\qquad \mbox{in $B_\lambda$.}
$$

\begin{center}
\includegraphics[width=50mm, height=100mm]{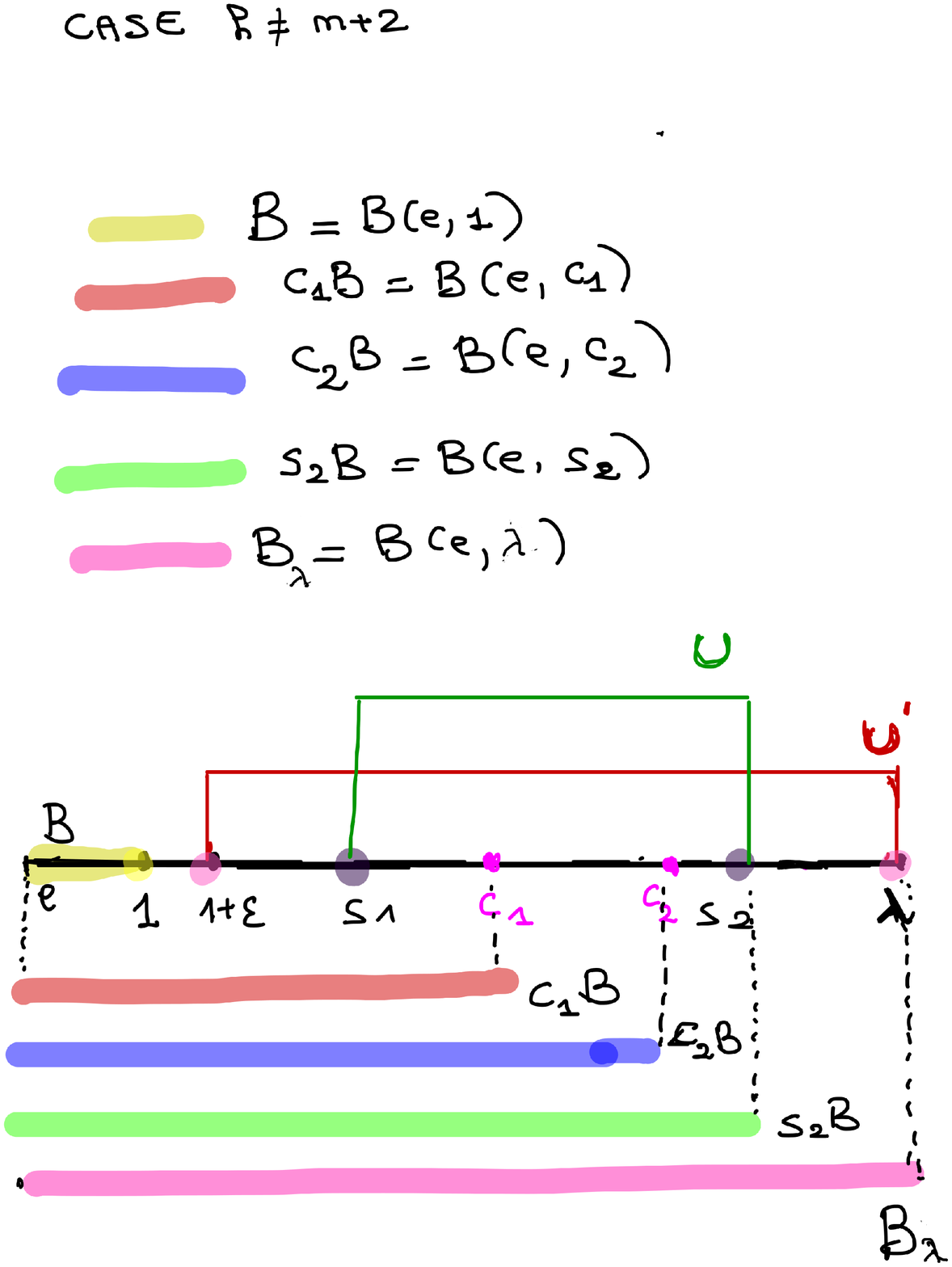}

\end{center}

By Remark \ref{commutators_remark}, keeping into account that 
$\zeta\gamma'\in E_0^{h-2}$, if $h-2\neq n$, we have
\begin{equation*}\begin{split}
\| \beta \|_{L^\infty(B_\lambda,E_0^{h-1})}  & =  \| \beta \|_{L^\infty(s_2B,E_0^{h-1}))}    \le \| \gamma \|_{L^\infty(B_\lambda,E_0^{h-1}))} + \| d_c( \zeta\gamma') \|_{L^\infty(s_2B,E_0^{h-1})}
\\&
= \| \gamma \|_{L^\infty(B_\lambda,E_0^{h-1})} + \| d_c( \zeta\gamma') \|_{L^\infty(U,E_0^{h-1})}
\\&
 \le \| \gamma \|_{L^\infty(B_\lambda,E_0^{h-1})} +  \|  d_c\gamma' \|_{L^\infty(U,E_0^{h-1})}
+  C\|\gamma' \|_{L^\infty(U, E_0^{h-2})}
\\&
\le \| \gamma \|_{L^\infty(B_\lambda,E_0^{h-1})} +\| \gamma \|_{L^\infty(U,E_0^{h-1})}+ C \| \gamma \|_{L^\infty(U',E_0^{h-1})}   \qquad\mbox{(by \eqref{8 maggio 1})}
\\&
\le C \| \gamma \|_{L^\infty(B_\lambda,E_0^{h-1})} \le C\|\alpha\|_{L^q(B,E_0^{h})}  \qquad\mbox{(by \eqref{8 maggio 2}).}
\end{split}\end{equation*}
Thus,  $\he{}$-$\mathrm{Sobolev}_{q,\infty}(h)$ holds for $h\neq n+2$.

On the other hand, when $h-2=n$ , keeping into account
Remark \ref{commutators_remark}-ii),  we have
\begin{equation*}\begin{split}
\| \beta \|_{L^\infty(B_\lambda,E_0^{h-1})}  & =  \| \beta \|_{L^\infty(s_2B,E_0^{h-1}))}    \le \| \gamma \|_{L^\infty(B_\lambda,E_0^{h-1}))} + \| d_c( \zeta\gamma') \|_{L^\infty(s_2B,E_0^{h-1})}
\\&
= \| \gamma \|_{L^\infty(B_\lambda,E_0^{h-1})} + \| d_c( \zeta\gamma') \|_{L^\infty(U,E_0^{h-1})}
\\&
 \le \| \gamma \|_{L^\infty(B_\lambda,E_0^{h-1})} +  \|  d_c\gamma' \|_{L^\infty(U,E_0^{h-1})}
+  C\|\gamma' \|_{W^{1,\infty}(U, E_0^{h-2})}
\\&
\le \| \gamma \|_{L^\infty(B_\lambda,E_0^{h-1})} +\| \gamma \|_{L^\infty(U,E_0^{h-1})}+ C \| \gamma \|_{L^\infty(U',E_0^{h-1})}   \qquad\mbox{(by \eqref{uffa})}
\\&
\le C \| \gamma \|_{L^\infty(B_\lambda,E_0^{h-1})} \le C\|\alpha\|_{L^q(B,E_0^{h})}  \qquad\mbox{(by \eqref{8 maggio 2}).}
\end{split}\end{equation*}
Thus, by Definition
\ref{equiv Sobolev},  $\he{}$-$\mathrm{Sobolev}_{q,\infty }(n+2)$ holds.
\end{proof}

\section{Appendix: Rumin's complex}\label{appendix}
Coherently with the notations introduced through the paper,
we set (see  \eqref{campi W})
\begin{equation*}
\omega_i:=dx_i, \quad \omega_{i+n}:= dy_i \quad { \mathrm{and} }\quad \omega_{2n+1}:= \theta, \quad \text
{for }i =1, \dots, n
\end{equation*}
we denote by $\scalp{\cdot}{\cdot}{} $ the
inner product in $\covH 1$  that makes $(dx_1,\dots, dy_{n},\theta  )$ 
an orthonormal basis.

{ 
We put
$       \vetH 0 := \covH 0 =\R $
and, for $1\leq h \leq 2n+1$,
\begin{equation*}
\begin{split}
         \covH h& :=\mathrm {span}\{ \omega_{i_1}\wedge\dots \wedge \omega_{i_h}:
1\leq i_1< \dots< i_h\leq 2n+1\}
.
\end{split}
\end{equation*}
In the sequel we shall denote by $\Theta^h$ the basis of $ \covH h$ defined by
$$
\Theta^h:= \{ \omega_{i_1}\wedge\dots \wedge \omega_{i_h}:
1\leq i_1< \dots< i_h\leq 2n+1\}.
$$
To avoid cumbersome notations, if $I:=({i_1},\dots,{i_h})$, we write
$$
\omega_I := \omega_{i_1}\wedge\dots \wedge \omega_{i_h}.
$$
The  {{ inner}} product $\scal{\cdot}{\cdot}$ on $ \covH 1$ yields naturally a {{ inner}} product 
$\scal{\cdot}{\cdot}$ on $ \covH h$
making $\Theta^h$ an orthonormal basis.

The volume $(2n+1)$-form $ \theta_1\wedge\cdots\wedge \theta_{ 2n+1}$
 will be also
written as $dV$.

Throughout this paper, the elements of $\cov h$ are identified with \emph{left invariant} differential forms
of degree $h$ on $\he n$. 

\begin{definition}\label{left} A $h$-form $\alpha$ on $\he n$ is said left invariant if 
$$\tau_q^\#\alpha
=\alpha\qquad\mbox{for any $q\in\he n$.}
$$
Here $\tau_q^\#\alpha$ denotes the pull-back of $\alpha$ through the left translation $\tau_q$.
\end{definition}

The same construction can be performed starting from the vector
subspace $\mfrak h_1\subset \mfrak h$,
obtaining the {\it horizontal $h$-covectors} 
\begin{equation*}
\begin{split}
         \covh h& :=\mathrm {span}\{ \omega_{i_1}\wedge\dots \wedge \omega_{i_h}:
1\leq i_1< \dots< i_h\leq 2n\}.
\end{split}
\end{equation*}
It is easy to see that 
$$
\Theta^h_0 := \Theta^h \cap  \covh h
$$ 
provides an orthonormal
basis of $ \covh h$.

Keeping in mind that the Lie algebra $\mathfrak h$ can be identified with the
tangent space to $\he n$ at $x=e$ (see, e.g. \cite{GHL}, Proposition 1.72), 
starting from $\cov h$ we can define by left translation  a fiber bundle
over $\he n$  that we can still denote by $\cov h$. We can think of $h$-forms as sections of 
$\cov h$. We denote by $\Omega^h$ the
vector space of all smooth $h$-forms.

We already pointed out in Section \ref{preliminary} that the stratification
of the Lie algebra $\mfrak h$ yields a lack of homogeneity of de Rham's exterior differential
with respect to group dilations $\delta_\lambda$.  Thus, to keep into account the different degrees
of homogeneity of the covectors when they vanish on different layers of the
stratification, we introduce the notion of {\sl weight} of a covector as follows.
}

\begin{definition}\label{weight} If $\eta\neq 0$, $\eta\in \covh 1$,  
 we say that $\eta$ has \emph{weight $1$}, and we write
$w(\eta)=1$. If $\eta = \theta$, we say $w(\eta)= 2$.
More generally, if
$\eta\in \covH h$, {  $\eta\neq 0$, }we say that $\eta$ has \emph {pure weight} $p$ if $\eta$ is
a linear combination of covectors $\omega_{i_1}\wedge\cdots\wedge\omega_{i_h}$
with $w(\omega_{i_1})+\cdots + w(\omega_{ i_h})=p$.
\end{definition}

Notice that, if $\eta,\zeta \in \covH h$ and $w(\eta)\neq w(\zeta)$, then
$\scal{\eta}{\zeta}=0$ (see \cite{BFTT}, Remark 2.4).  We notice also that
$w(d\theta) = w(\theta)$.

We stress that generic covectors may fail to have a pure weight: it is enough to
consider $\he 1$ and the covector $dx_1+\theta\in \covH{1}$. However, the
following result holds
(see \cite{BFTT}, formula (16)):
\begin{equation}\label{dec weights}
\covH h = \covw {h}{h}\oplus \covw {h}{h+1} =  \covh h\oplus \Big(\covh {h-1}\Big)\wedge \theta,
\end{equation}
where $\covw {h}{p}$ denotes the linear span of the $h$-covectors of weight $p$.
By our previous remark, the decomposition \eqref{dec weights} is orthogonal.
In addition, since the elements of the basis $\Theta^h$ have pure weights, a basis of
$ \covw {h}{p}$ is given by $\Theta^{h,p}:=\Theta^h\cap \covw {h}{p}$
(such a basis is usually called an adapted basis). 

As above, starting from  $\covw {h}{p}$, we can define by left translation  a fiber bundle
over $\he n$  that we can still denote by $\covw {h}{p}$. 
Thus, if we denote by  $\Omega^{h,p} $ the vector space of all
smooth $h$--forms in $\he n$ of  weight $p$, i.e. the space of all
smooth sections of $\covw {h}{p}$, we have
\begin{equation}\label{deco forms}
\Omega^h = \Omega^{h,h}\oplus\Omega^{h,h+1} .
\end{equation}

\noindent{\bf{Definition of Rumin's complex}}

Let us give a short introduction to Rumin's complex. For a more detailed presentation we
refer to Rumin's papers \cite{rumin_grenoble} following verbatim the presentation of \cite{BFP2}. Here we follow the presentation of \cite{BFTT}.

The exterior differential $d$ does not preserve weights. It splits into
\begin{eqnarray*}
d=d_0+d_1+d_2
\end{eqnarray*}
where $d_0$ preserves weight, $d_1$ increases weight by 1 unit and $d_2$ increases weight by 2 units.

More explicitly,
let $\alpha\in \Omega^{h}$ be a (say) smooth form
of pure weight $h$. We can write
$$
\alpha= \sum_{\omega_I\in\Theta^{h}_0}\alpha_{I}\, \omega_I 
,\quad
\mbox{with } \alpha_I \in \mc C^\infty (\he n).
$$
Then
$$
d\alpha= \sum_{\omega_I\in\Theta^{h}_0}\sum_{j=1}^{2n} (W_j\alpha_{I})\, \omega_j\wedge\omega_I + 
\sum_{\omega_I\in\Theta^{h}_0} (T \alpha_{I})\, \theta\wedge \omega_I = d_1\alpha +  d_2\alpha,
$$
and $d_0\alpha =0$. On the other hand, if $\alpha\in \Omega^{h,h+1}$ has pure weight $h+1$, then 
$$
\alpha =  \sum_{\omega_J\in\Theta^{h-1}_0}\alpha_{J}\, \theta\wedge\omega_J,
$$
and
$$
d\alpha= \sum_{\omega_J\in\Theta^{h}_0}\alpha_J\,d\theta\wedge\omega_J + \sum_{\omega_J\in\Theta^{h}_0}\sum_{j=1}^{2n} (W_j\alpha_{J})\, \omega_j\wedge\theta\wedge\omega_I 
=d_0\alpha+d_1\alpha,
$$
and $d_2\alpha=0$.

It is crucial to notice that  $d_0$ is an algebraic operator, in the sense that
for any real-valued $f\in\mc C^\infty (\he n)$ we have
$$
d_0(f\alpha)= f d_0\alpha,
$$
so that its action can be identified at any point with the action of a linear
operator from   $\cov h$ to $\cov {h+1}$ (that we denote again by $d_0$). 

Following M. Rumin (\cite{rumin_grenoble}, \cite{rumin_cras}) we give the following definition:
\begin{definition}\label{E0}
If $0\le h\le 2n+1$, keeping in mind that $\cov h$ is endowed with a canonical
{{ inner}} product, we set
$$
E_0^h:= \ker d_0\cap (\mathrm{Im}\; d_0)^{\perp}.
$$
Straightforwardly, $E_0^h$ inherits from $\cov h$ the
{{ inner}} product. 
\end{definition}

As above,  $E_0^\bullet$ defines by left translation a fibre bundle over $\he n$,
that we still denote by $E_0^\bullet$. To avoid cumbersome notations,
we denote also by  $E_0^\bullet$ the space of sections of this fibre bundle.

Let $L: \cov h \to \cov{h+2}$ the Lefschetz operator defined by
\begin{equation}\label{lefs}
L\, \xi = d\theta\wedge\xi.
\end{equation}
Then the spaces $E_0^\bullet$  can be defined explicitly as follows:

\begin{theorem}[see \cite{rumin_jdg}, \cite{rumin_gafa}] \label{rumin in H} We have:
\begin{itemize}
\item[i)] $E_0^1= \covh{1}$;
\item[ii)]  if $2\le h\le n$, then $E_0^h= \covh{h}\cap \big(\covh{h-2}\wedge d\theta\big)^\perp$
 (i.e. $E_0^h$ is the space of the so-called \emph{primitive covectors} of $\covh h$);
\item[iii)]  if $n< h\le 2n+1$, then $E_0^h = \{\alpha = \beta\wedge\theta, \; \beta\in \covh{h-1},
\; \gamma\wedge d\theta =0\} = \theta\wedge\ker L$;
\item[iv)]  if $1<h\le n$, then $N_h:=\dim E_0^h = \binom{2n}{h} - \binom{2n}{h-2}$;
\item[v)]  if $\ast$ denotes the Hodge duality associated with the {{ inner}} product in $\cov{\bullet}$
and the volume form $dV$, then $\ast E_0^h = E_0^{2n+1-h}$.
\end{itemize} 
Notice that all forms in $E_0^h$ have weight $h$ if $1\le h\le n$ and
weight $h+1$ if $n< h\le 2n+1$.

\end{theorem}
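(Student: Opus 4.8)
The plan is to compute $E_0^h=\ker d_0\cap(\mathrm{Im}\,d_0)^\perp$ explicitly from the algebraic description of $d_0$, and then to feed in the classical symplectic linear algebra attached to $d\theta$. First I would record, from the weight splitting $\cov h=\covh h\oplus(\covh{h-1}\wedge\theta)$ and the formulas for $d=d_0+d_1+d_2$ recalled above, that $d_0$ annihilates $\covh h$ and sends $\theta\wedge\xi\mapsto d\theta\wedge\xi=L\xi$ for $\xi\in\covh{h-1}$, where $L$ is the Lefschetz operator \eqref{lefs}. Hence, in degree $h$,
$$
\ker d_0=\covh h\oplus\bigl(\theta\wedge\ker(L|_{\covh{h-1}})\bigr),\qquad \mathrm{Im}\,d_0=L(\covh{h-2})\subseteq\covh h .
$$
Since $\mathrm{Im}\,d_0$ is contained in the summand $\covh h$, which is orthogonal to $\covh{h-1}\wedge\theta$, intersecting with the kernel gives the orthogonal decomposition
$$
E_0^h=\Bigl(\covh h\cap\bigl(\covh{h-2}\wedge d\theta\bigr)^\perp\Bigr)\;\oplus\;\bigl(\theta\wedge\ker(L|_{\covh{h-1}})\bigr).
$$
The first summand is, by definition, the space of primitive $h$-covectors in $\covh h$: the orthogonal complement of $\mathrm{Im}\,L$ equals $\ker\Lambda$, where $\Lambda$ denotes the adjoint of $L$, i.e. contraction with $d\theta$.

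Next I would invoke that $d\theta$ restricts to a non-degenerate skew form on $\mfrak h_1$, so that $L$, $\Lambda$ and $H:=[L,\Lambda]$ span an $\mathfrak{sl}_2$-triple acting on $\covh\bullet$, with $\covh k$ of weight $k-n$. Standard $\mathfrak{sl}_2$-theory then gives: $L^{\,j}\colon\covh{n-j}\to\covh{n+j}$ is an isomorphism for $0\le j\le n$; consequently $L\colon\covh k\to\covh{k+2}$ is injective for $k\le n-1$ and surjective for $k\ge n-1$; and one has the primitive decomposition $\covh k=\bigoplus_{j\ge0}L^jP^{\,k-2j}$, where $P^m$ is the space of primitive $m$-covectors. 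Applying this to the decomposition of $E_0^h$ above: for $2\le h\le n$ one has $h-1\le n-1$, so $L|_{\covh{h-1}}$ is injective and the second summand vanishes, yielding (ii); the case $h=1$ is identical, the primitivity constraint being vacuous since $\covh{h-2}=\{0\}$, which gives (i). For $n<h\le2n+1$ one has $h-2\ge n-1$, so $L|_{\covh{h-2}}$ is onto $\covh h$, the primitive summand vanishes, and $E_0^h=\theta\wedge\ker(L|_{\covh{h-1}})=\theta\wedge\ker L$, which is (iii) (here ``$\gamma$'' in the displayed set should read ``$\beta$'', and $\beta$ may be taken horizontal since $\theta\wedge\theta=0$).

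For (iv), comparing dimensions in $\covh k=\bigoplus_{j\ge0}L^jP^{\,k-2j}$ for $k=h$ and for $k=h-2$ and subtracting — a valid telescoping because $h\le n$ — gives $\dim P^h=\binom{2n}{h}-\binom{2n}{h-2}$, and $E_0^h=P^h$ by (ii). For (v) I would use that $(\mathrm{Im}\,d_0)^\perp=\ker d_0^*$, so $E_0^h=\ker d_0\cap\ker d_0^*$ is the kernel of the algebraic Laplacian $d_0d_0^*+d_0^*d_0$; it then suffices to check that the Hodge star $\ast\colon\cov h\to\cov{2n+1-h}$ associated with $\scal\cdot\cdot$ and $dV$ conjugates $d_0$ to $\pm d_0^*$, which follows by writing $\ast$ in terms of the symplectic star on $\covh\bullet$ and wedging/contracting with $\theta$, together with the classical identity $\ast_{\mathrm{symp}}L=\pm\Lambda\,\ast_{\mathrm{symp}}$. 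Then $\ast$ interchanges $\ker d_0$ and $\ker d_0^*$ in complementary degrees and therefore maps $E_0^h$ isomorphically onto $E_0^{2n+1-h}$. The weight assertion is then immediate: $\covh h$ has pure weight $h$ and $\covh{h-1}\wedge\theta$ pure weight $h+1$, so (ii) forces weight $h$ for $1\le h\le n$ and (iii) forces weight $h+1$ for $n<h\le2n+1$. The only genuinely delicate points are the symplectic input (the $\mathfrak{sl}_2$-structure of $L,\Lambda,H$ and hard Lefschetz) and the sign bookkeeping in the Hodge-star computation for (v); the rest is routine manipulation of the weight decomposition.
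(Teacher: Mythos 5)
Your argument is correct, and it is essentially the standard proof: the paper itself offers no proof of this theorem (it is quoted from Rumin's papers, and the Appendix only supplies the definition $E_0^h=\ker d_0\cap(\mathrm{Im}\,d_0)^{\perp}$ and the formulas for $d_0$ that you start from). Your computation of $\ker d_0$ and $\mathrm{Im}\,d_0$ from the weight splitting, the reduction to the $\mathfrak{sl}_2$-theory of $(L,\Lambda,H)$ on $\covh{\bullet}$ (injectivity of $L$ below middle degree, surjectivity above, primitive decomposition), the dimension count for (iv), and the Hodge-star conjugation $\ast d_0\ast^{-1}=\pm d_0^{*}$ for (v) all check out, and you correctly identify the typo $\gamma\mapsto\beta$ in item (iii). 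The only caveat is that the symplectic inputs (hard Lefschetz on $\Lambda^{\bullet}\mathfrak h_1^{*}$ and the identity $\ast_{\mathrm{symp}}L=\pm\Lambda\ast_{\mathrm{symp}}$) are themselves nontrivial and are used as black boxes, exactly as in the cited references, so nothing is lost relative to the paper.
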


A further geometric interpretation (in terms of decomposition of $\mathfrak h$ and of graphs
within $\he n$) can be found in \cite{FS2}.

Notice that there exists a left invariant orthonormal basis 
\begin{equation}\label{basis E0}
\Xi_0^h=\{\xi_1^h,\dots, \xi_{\mathrm{dim}\, E_0^h}^h\}
\end{equation}
of $E_0^h$ that is adapted to the filtration
\eqref{dec weights}. Such a basis is explicitly constructed by {  induction} in \cite{BBF}.

The core of Rumin's theory consists in the construction of a suitable ``exterior differential''
$d_c: E_0^h \to E_0^{h+1}$ making $\mc E_0:= (E_0^\bullet, d_c)$ a complex homotopic
to the de Rham complex.

Let us sketch Rumin's construction: first the next result (see \cite{BFTT}, Lemma 2.11 for a proof) allows us to define a (pseudo) inverse of $d_0$ : 
\begin{lemma}\label{d_0} If $1\le h\le n$, then $\ker d_0 = \covh{h}$.
Moreover, if $\beta\in \covH {h+1}$, then there exists a unique $\gamma\in
\covH{h}\cap (\ker d_0)^\perp$ such that
$$
d_0\gamma-\beta\in \mc R(d_0)^\perp.
$$

\end{lemma}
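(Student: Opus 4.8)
The plan is to reduce the statement to linear algebra on the graded spaces $\covH\bullet$, using the adapted splitting \eqref{dec weights} and hard Lefschetz on the symplectic space $\covh1$. Recall from the construction recalled before Definition \ref{E0} that $d_0$ is the weight-preserving part of $d$; on the left-invariant covectors it is the graded derivation of $\covH\bullet$ determined by $d_0(dx_i)=d_0(dy_i)=0$ and, since $w(d\theta)=w(\theta)$, by $d_0\theta=d\theta$, where $d\theta$ is — up to a nonzero scalar — the symplectic form $\sum_i dx_i\wedge dy_i$ on $\covh1$ (this is precisely the non-degeneracy of the bracket $\mfrak h_1\otimes\mfrak h_1\to\mfrak h_2$, cf.\ \cite{BFTT}). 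Writing $\alpha=\alpha_1+\alpha_2\wedge\theta\in\covH h$ with $\alpha_1\in\covh h$, $\alpha_2\in\covh{h-1}$ as in \eqref{dec weights}, the derivation property gives
$$
d_0\alpha=(-1)^{h-1}\,\alpha_2\wedge d\theta .
$$
Hence $\covh h\subseteq\ker d_0$, and — since the splitting \eqref{dec weights} is orthogonal — the equality $\ker d_0=\covh h$ is equivalent to the injectivity of the Lefschetz map $L\colon\covh{h-1}\to\covh{h+1}$, $L\gamma:=\gamma\wedge d\theta$.

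For the injectivity of $L$ I would invoke the standard symplectic linear algebra on the $2n$-dimensional space $\covh1$ equipped with the non-degenerate $2$-form $d\theta$: the iterated Lefschetz operator $L^{n-k}\colon\covh k\to\covh{2n-k}$ is an isomorphism for $0\le k\le n$, whence $L\colon\covh k\to\covh{k+2}$ is injective for $k\le n-1$. Taking $k=h-1$, the hypothesis $h\le n$ gives $k\le n-1$, so $L$ is injective in the required range, and $\ker d_0=\covh h$ follows. (This is also consistent with the primitive-covector description of $E_0^h$ in Theorem \ref{rumin in H}.)

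For the second assertion, note first that by the orthogonality of \eqref{dec weights} and the first part, $\covH h\cap(\ker d_0)^\perp=\covh{h-1}\wedge\theta$, and on this subspace $d_0$ is — up to sign — the composition of $L$ with the isomorphism $\covh{h-1}\wedge\theta\cong\covh{h-1}$; by the injectivity of $L$ it is therefore injective, and its image equals $\mc R(d_0)=d_0(\covH h)$, because the complementary summand $\covh h$ of $\covH h$ lies in $\ker d_0$. Thus $d_0$ restricts to a linear isomorphism $(\ker d_0)^\perp\to\mc R(d_0)$. Given $\beta\in\covH{h+1}$, let $\pi$ denote the orthogonal projection of $\covH{h+1}$ onto $\mc R(d_0)$ and set $\gamma:=\bigl(d_0|_{(\ker d_0)^\perp}\bigr)^{-1}(\pi\beta)$; then $d_0\gamma=\pi\beta$, so $d_0\gamma-\beta=\pi\beta-\beta\in\mc R(d_0)^\perp$. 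If $\gamma'\in(\ker d_0)^\perp$ also satisfied $d_0\gamma'-\beta\in\mc R(d_0)^\perp$, then $d_0(\gamma-\gamma')\in\mc R(d_0)\cap\mc R(d_0)^\perp=\{0\}$, hence $\gamma-\gamma'\in\ker d_0\cap(\ker d_0)^\perp=\{0\}$, giving uniqueness. The one genuinely non-formal input is the hard Lefschetz injectivity on $\covh1$; the rest is bookkeeping with the weight filtration and the inner product, so I anticipate no real obstacle.
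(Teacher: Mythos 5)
Your argument is correct and is essentially the standard proof that the paper outsources to \cite{BFTT}, Lemma 2.11: the only non-formal input is the injectivity of the Lefschetz map $L=\,\cdot\wedge d\theta$ on $\covh{h-1}$ for $h\le n$ (consistent with the primitive-covector description in Theorem \ref{rumin in H}), and the second assertion is the usual Moore--Penrose construction for the linear map $d_0$ between inner-product spaces. No gaps.
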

With the notations of the previous lemma, we set $$\gamma :=d_0^{-1}\beta.$$
We notice that $d_0^{-1}$ preserves the weights.

The following  theorem summarizes the construction of 
the intrinsic differential $d_c$ (for details, see \cite{rumin_grenoble}
and \cite{BFTT}, Section 2) .
\begin{theorem}\label{main rumin new}
The de Rham complex $(\Omega^\bullet,d)$ 
splits into the direct sum of two sub-complexes $(E^\bullet,d)$ and
$(F^\bullet,d)$, with
$$
E:=\ker d_0^{-1}\cap\ker (d_0^{-1}d)\quad\mbox{and}\quad
F:= \mc R(d_0^{-1})+\mc R (dd_0^{-1}).
$$
Let $\Pi_E$ be the projection on $E$ along $F$ (that
is not an orthogonal projection). We have
\begin{itemize}
\item[i)]   If $\gamma\in E_0^{h}$,  then
\begin{itemize}
\item[$\bullet$] $
\Pi_E\gamma=\gamma -d_0^{-1}
d_1 \gamma$ if $1\le h\le n$;
\item[$\bullet$] $
\Pi_E\gamma=\gamma $ if $h>n$.
\end{itemize}
\item[ii)] $\Pi_{E}$ is a chain map, i.e.
$$
d\Pi_{E} = \Pi_{E}d.
$$
\item[iii)] Let $\;\Pi_{E_0}$ be the orthogonal projection from $\covH{*}$
on $E_0^\bullet$, then
\begin{equation}\label{PiE0 project}
\Pi_{E_0} = Id - d_0^{-1}d_0-d_0d_0^{-1}, \quad
\Pi_{E_0^\perp} =  d_0^{-1}d_0 + d_0d_0^{-1}.
\end{equation}
\item[iv)] $\Pi_{E_0}\Pi_{E}\Pi_{E_0}=\Pi_{E_0}$ and
$\Pi_{E}\Pi_{E_0}\Pi_{E}=\Pi_{E}$.

\end{itemize}

\noindent Set now
 $$d_c=\Pi_{E_0}\, d\,\Pi_{E}: E_0^h\to E_0^{h+1}, \quad h=0,\dots,2n.$$
 We have:
\begin{itemize}
\item[v)] $d_c^2=0$;
\item[vi)] the complex $E_0:=(E_0^\bullet,d_c)$ is homotopic to the de Rham complex;
\item[vii)] $d_c: E_0^h\to E_0^{h+1}$ is a homogeneous differential operator in the 
horizontal derivatives
of order 1 if $h\neq n$, whereas $d_c: E_0^n\to E_0^{n+1}$ is an homogeneous differential operator in the 
horizontal derivatives
of order 2;
\item[viii)] on forms of degree $h>n$ we have $d_c= d$. Indeed, if $\gamma\in E_0^h$ with $h>n$, then,
by i) and iv)  
$$
d_c\gamma = \Pi_{E_0}\Pi_E d\gamma  = \Pi_E\Pi_{E_0}\Pi_E d\gamma
= \Pi_E d\gamma = d \Pi_E \gamma = d\gamma,
$$
(see also \cite{BFT2}).
\item[ix)] on forms of degree $h=n$, $\Pi_E-Id_{E_0^n} =-d_0^{-1}
d_1$ raises weight by one unit, i.e. it maps $E_0^n\subset\bigwedge^{n,n}$ to $\bigwedge^{n,n+1}$.
\end{itemize}
\end{theorem}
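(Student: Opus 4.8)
The statement collects the facts established in \cite{rumin_grenoble} and \cite{rumin_jdg}, in the streamlined form of \cite{BFTT}, Section~2, so I will only sketch the steps. \emph{The algebraic splitting and ii).} The operator $d_0$ is algebraic (order zero) and weight-preserving, hence so is the partial inverse $d_0^{-1}$ of Lemma~\ref{d_0}, which moreover satisfies $d_0^{-1}d_0^{-1}=0$, $d_0d_0^{-1}d_0=d_0$ and $d_0^{-1}d_0d_0^{-1}=d_0^{-1}$, and vanishes exactly on $\mc R(d_0)^\perp$. The first step is to check that $E=\ker d_0^{-1}\cap\ker(d_0^{-1}d)$ and $F=\mc R(d_0^{-1})+\mc R(dd_0^{-1})$ are $d$-invariant, that $E\cap F=\{0\}$, and that $\Omega^\bullet=E^\bullet\oplus F^\bullet$. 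The decomposition is obtained by feeding a given form into the algebraic homotopy $d_0^{-1}d+dd_0^{-1}$ and iterating: since $d_0^{-1}$ strictly raises the weight (equivalently lowers the index $\deg-\mathrm{weight}$), the iteration terminates after finitely many steps and identifies the (non-orthogonal) projection $\Pi_E$ onto $E$ along $F$. Being a projection onto a subcomplex along a subcomplex, $\Pi_E$ automatically commutes with $d$; this is ii).

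\emph{The explicit formulas i), iii), iv).} Let $\gamma\in E_0^h$. By Definition~\ref{E0}, $d_0\gamma=0$ and $\gamma\in\mc R(d_0)^\perp$, so $d_0^{-1}\gamma=0$. If $1\le h\le n$ then $\gamma$ has pure weight $h$, so $d\gamma=d_1\gamma+d_2\gamma$; one checks, using the defining relations of $d_0^{-1}$ and a weight count, that $\gamma-d_0^{-1}d_1\gamma$ lies in $\ker d_0^{-1}\cap\ker(d_0^{-1}d)=E$, whence $\Pi_E\gamma=\gamma-d_0^{-1}d_1\gamma$. If $h>n$, then by Theorem~\ref{rumin in H}~iii) $\gamma=\beta\wedge\theta$ with $\beta\wedge d\theta=0$, and one verifies $d_0^{-1}\gamma=0$ and $d_0^{-1}d\gamma=0$, so $\gamma\in E$ and $\Pi_E\gamma=\gamma$. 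For iii), the right-hand side $\mathrm{Id}-d_0^{-1}d_0-d_0d_0^{-1}$ is verified to be the identity on $E_0^h=\ker d_0\cap\mc R(d_0)^\perp$ and to vanish on $(E_0^h)^\perp=\mc R(d_0)+\mc R(d_0^{-1})$, using the identities among $d_0,d_0^{-1}$ recalled above; this forces it to equal $\Pi_{E_0}$, and by complementation gives the formula for $\Pi_{E_0^\perp}$. Property iv) then follows at once: for $\gamma\in E_0^\bullet$ one has $\Pi_E\gamma-\gamma\in\mc R(d_0^{-1})\subseteq(E_0^\bullet)^\perp$, so $\Pi_{E_0}\Pi_E\gamma=\Pi_{E_0}\gamma=\gamma$, and similarly $\Pi_E\Pi_{E_0}\Pi_E=\Pi_E$.

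\emph{The operator $d_c=\Pi_{E_0}d\Pi_E$.} Using $\Pi_Ed=d\Pi_E$, $\Pi_E^2=\Pi_E$ and iv), $d_c^2=\Pi_{E_0}d\,(\Pi_E\Pi_{E_0}\Pi_E)\,d\Pi_E=\Pi_{E_0}d\Pi_E d\Pi_E=\Pi_{E_0}d^2\Pi_E=0$, which is v). The homotopy equivalence vi) with $(\Omega^\bullet,d)$ is produced from the chain maps $\Pi_{E_0}\Pi_E$ and the inclusion $\iota\colon E_0^\bullet\hookrightarrow\Omega^\bullet$, with a chain homotopy built from $d_0^{-1}$ (see \cite{rumin_grenoble}). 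For vii), in degree $h\neq n$ the only non-algebraic ingredient entering $d_c$ is one application of $d_1$, i.e.\ one horizontal derivative, so $d_c$ is first order; in degree $n$, $d_c\gamma=\Pi_{E_0}d(\gamma-d_0^{-1}d_1\gamma)$, and the weight-$(n+2)$ component of $d(d_0^{-1}d_1\gamma)$, which is not killed by $\Pi_{E_0}$, carries a $T$-derivative, hence two horizontal derivatives since $T=[X_i,Y_i]$: this is the order-two phenomenon. Finally viii) is the displayed computation, combining $\Pi_E\gamma=\gamma$ for $h>n$ with $d\Pi_E=\Pi_Ed$ and iv); and ix) is immediate from $\Pi_E-\mathrm{Id}_{E_0^n}=-d_0^{-1}d_1$, since $d_1$ raises weight by one unit and $d_0^{-1}$ preserves weight. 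The only genuinely delicate points are the termination of the iteration defining $\Pi_E$ and the weight bookkeeping that pins down the explicit formula in i) and the order statement in vii); everything else is formal manipulation of $d_0$, $d_0^{-1}$ and the projections.
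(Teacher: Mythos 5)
Your sketch is correct and follows exactly the route the paper itself takes: Theorem \ref{main rumin new} is presented there as a summary of Rumin's construction with the details deferred to \cite{rumin_grenoble} and \cite{BFTT}, Section 2, and the only argument given inline is the computation for viii), which you reproduce verbatim via i), ii) and iv). One small wording slip: in your first paragraph you say ``$d_0^{-1}$ strictly raises the weight,'' which contradicts both the paper (``$d_0^{-1}$ preserves the weights'') and your own last paragraph --- the correct invariant is the parenthetical one you give (it lowers $\deg-\mathrm{weight}$, since it preserves weight while lowering degree by one, so the composites $d_0^{-1}d_1$, $d_0^{-1}d_2$ strictly raise weight at fixed degree and the iteration defining $\Pi_E$ terminates); this does not affect the argument.
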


The next remarkable property of Rumin's complex is its invariance under contact transformations. 
In particular,

\begin{proposition}\label{pull} If we write a form $\alpha = \sum_j \alpha_j \xi_j^h$ in coordinates with respect to a left-invariant basis of $E_0^h$
(see \eqref{basis E0}) we have:
\begin{equation}\label{pull trasl}
\tau_q^\# \alpha = \sum_j ( \alpha_j \circ \tau_q)\xi_j^h
\end{equation}
for all $q\in \he n$.
In addition, for $t>0$,
\begin{equation}\label{pull dil 1}
\delta_t^\# \alpha =  t^h \sum_j ( \alpha_j \circ \delta_t)\xi_j^h\qquad\mbox{if $1\le h \le n$}
\end{equation}
and 
\begin{equation}\label{pull dil 2}
\delta_t^\# \alpha =  t^{h+1} \sum_j ( \alpha_j \circ\delta_t)\xi_j^h\qquad\mbox{if $n+1\le h \le 2n+1$}\,.
\end{equation}
\end{proposition}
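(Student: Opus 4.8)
The plan is to reduce the statement to two structural facts recorded above, combined with elementary functoriality of the pull‑back. First, I would use that the orthonormal frame $\Xi_0^h=\{\xi_1^h,\dots,\xi_{\dim E_0^h}^h\}$ of \eqref{basis E0} can be chosen \emph{left-invariant} (it is built explicitly by induction in \cite{BBF}), so that $\tau_q^\#\xi_j^h=\xi_j^h$ for every $q\in\he n$ by Definition \ref{left}. Second, $E_0^h$ is a \emph{pure-weight} subspace of $\cov h$: by Theorem \ref{rumin in H} every covector in $E_0^h$ has weight $p$, with $p=h$ for $1\le h\le n$ and $p=h+1$ for $n+1\le h\le 2n+1$, hence each $\xi_j^h$ lies in $\covw h p$. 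Throughout I would use that, for a diffeomorphism $\Phi$, pull-back is multiplicative on forms and acts on a $0$-form by $\Phi^\#f=f\circ\Phi$, so that $\Phi^\#(f\beta)=(f\circ\Phi)\,\Phi^\#\beta$ (this is valid for merely measurable coefficients as well, which is all we need here).

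Writing $\alpha=\sum_j\alpha_j\,\xi_j^h$ (the pointwise expansion of $\alpha$ in the left-translated frame), \eqref{pull trasl} would then follow immediately:
$$
\tau_q^\#\alpha=\sum_j\tau_q^\#(\alpha_j\,\xi_j^h)=\sum_j(\alpha_j\circ\tau_q)\,\tau_q^\#\xi_j^h=\sum_j(\alpha_j\circ\tau_q)\,\xi_j^h .
$$
For the dilation formulas I would first compute $\delta_t^\#$ on the left-invariant generators. In exponential coordinates $\delta_t(x,y,s)=(tx,ty,t^2s)$, whence $d(t^2s)=t^2\,ds$, $d(tx_i)=t\,dx_i$, $d(ty_i)=t\,dy_i$, and therefore
$$
\delta_t^\#(dx_i)=t\,dx_i,\qquad \delta_t^\#(dy_i)=t\,dy_i,\qquad
\delta_t^\#\theta=t^2\,ds-\frac{t^2}{2}\sum_{j=1}^n\big(x_j\,dy_j-y_j\,dx_j\big)=t^2\theta .
$$
Equivalently, since $\delta_t$ fixes $e$ and is a group automorphism (so $\delta_t^\#$ carries left-invariant forms to left-invariant forms, because $\delta_t\circ\tau_q=\tau_{\delta_t q}\circ\delta_t$), $\delta_t^\#$ acts on $\covH 1$ as the transpose of $d(\delta_t)_e$, namely by $t$ on $\covh 1$ and by $t^2$ on $\R\theta$. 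It follows that $\delta_t^\#$ respects the weight decomposition \eqref{dec weights} and acts on $\covw h p$ as multiplication by $t^p$; in particular $\delta_t^\#\xi_j^h=t^p\,\xi_j^h$ for every $j$. Hence
$$
\delta_t^\#\alpha=\sum_j(\alpha_j\circ\delta_t)\,\delta_t^\#\xi_j^h=t^p\sum_j(\alpha_j\circ\delta_t)\,\xi_j^h ,
$$
which is \eqref{pull dil 1} (with $p=h$) when $1\le h\le n$ and \eqref{pull dil 2} (with $p=h+1$) when $n+1\le h\le 2n+1$.

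The whole argument is bookkeeping of the weight grading, so I do not expect a genuine obstacle. The only point that deserves explicit care is the assertion that the orthonormal basis $\Xi_0^h$ can be taken simultaneously left-invariant and adapted to the filtration \eqref{dec weights}: this is exactly what the inductive construction of \cite{BBF} provides, and, $E_0^h$ being pure-weight, ``adapted'' here merely means that each $\xi_j^h$ lies in the single summand $\covw h p$.
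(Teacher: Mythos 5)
Your proof is correct. The paper states Proposition \ref{pull} without proof, and your argument supplies exactly the computation the authors have in mind: left-invariance of the adapted frame $\Xi_0^h$ gives \eqref{pull trasl} at once, and the facts that $\delta_t^\#$ scales $\covh{1}$ by $t$ and $\R\theta$ by $t^2$, together with the pure-weight property of $E_0^h$ recorded after Theorem \ref{rumin in H}, give \eqref{pull dil 1} and \eqref{pull dil 2}. No gaps.
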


\section*{Acknowledgments}

A.B. and B.F.  are supported by the University of Bologna, funds for selected research topics, and by MAnET Marie Curie
Initial Training Network, and by 
GNAMPA of INdAM (Istituto Nazionale di Alta Matematica ``F. Severi''), Italy.

P.P. is supported by MAnET Marie Curie
Initial Training Network, by Agence Nationale de la Recherche, ANR-10-BLAN 116-01 GGAA and ANR-15-CE40-0018 SRGI. P.P. gratefully acknowledges the hospitality of Isaac Newton Institute, of EPSRC under grant EP/K032208/1, and of Simons Foundation.

\bibliographystyle{amsplain}

\bibliography{BFP5_finale}

\bigskip
\tiny{
\noindent
Annalisa Baldi and Bruno Franchi 
\par\noindent
Universit\`a di Bologna, Dipartimento
di Matematica\par\noindent Piazza di
Porta S.~Donato 5, 40126 Bologna, Italy.
\par\noindent
e-mail:
annalisa.baldi2@unibo.it, 
bruno.franchi@unibo.it.
}

\medskip

\tiny{
\noindent
Pierre Pansu 
\par\noindent  Universit\'e Paris-Saclay, CNRS, Laboratoire de math\'ematiques d'Orsay
\par\noindent  91405, Orsay, France.
\par\noindent 
e-mail: pierre.pansu@universite-paris-saclay.fr
}

\end{document}